\definecolor{darkblue}{RGB}{0,0,160}
\def\e{{\rm e}}
\def\cic{\mathbf}
\def\eps{\varepsilon}
\def\d{{\rm d}}
\def\R {\mathbb{R}}
\def\N {\mathbb{N}}
\def\A {{\mathcal A}}
\def\Sy {{\mathsf{Sy}}}
\def\Q {{\mathsf Q}}
\def\M {{\mathrm M}}
\def \l {\langle}
\def \r {\rangle}
\def \and{\qquad\text{and}\qquad}
\newcommand{\supp}{\mathrm{supp}\,}
\newcounter{thms}
\newcounter{other}
\numberwithin{other}{section}
\newtheorem{proposition}[other]{Proposition}
\newtheorem{theorem}[thms]{Theorem}
\newtheorem*{theorem*}{Theorem}
\newtheorem*{proposition*}{Proposition}
\newtheorem{cor}{Corollary}
\newtheorem*{corollary*}{Corollary}
\numberwithin{cor}{thms}
\newtheorem{lemma}[other]{Lemma}
\theoremstyle{definition}
\newtheorem{definition}[other]{Definition}
\def\vint_#1{\mathchoice%
      {\mathop{\kern 0.2em\vrule width 0.6em height 0.69678ex depth -0.58065ex
              \kern -0.8em \intop}\nolimits_{\kern -0.4em#1}}%
      {\mathop{\kern 0.1em\vrule width 0.5em height 0.69678ex depth -0.60387ex
              \kern -0.6em \intop}\nolimits_{#1}}%
      {\mathop{\kern 0.1em\vrule width 0.5em height 0.69678ex depth -0.60387ex
              \kern -0.6em \intop}\nolimits_{#1}}%
      {\mathop{\kern 0.1em\vrule width 0.5em height 0.69678ex depth -0.60387ex
              \kern -0.6em \intop}\nolimits_{#1}}}
\def\vintslides_#1{\mathchoice%
      {\mathop{\kern 0.1em\vrule width 0.5em height 0.697ex depth -0.581ex
              \kern -0.6em \intop}\nolimits_{\kern -0.4em#1}}%
      {\mathop{\kern 0.1em\vrule width 0.3em height 0.697ex depth -0.604ex
              \kern -0.4em \intop}\nolimits_{#1}}%
      {\mathop{\kern 0.1em\vrule width 0.3em height 0.697ex depth -0.604ex
              \kern -0.4em \intop}\nolimits_{#1}}%
      {\mathop{\kern 0.1em\vrule width 0.3em height 0.697ex depth -0.604ex
              \kern -0.4em \intop}\nolimits_{#1}}}
\newcommand{\aveint}[2]{\mathchoice%
      {\mathop{\kern 0.2em\vrule width 0.6em height 0.69678ex depth -0.58065ex
              \kern -0.8em \intop}\nolimits_{\kern -0.45em#1}^{#2}}%
      {\mathop{\kern 0.1em\vrule width 0.5em height 0.69678ex depth -0.60387ex
              \kern -0.6em \intop}\nolimits_{#1}^{#2}}%
      {\mathop{\kern 0.1em\vrule width 0.5em height 0.69678ex depth -0.60387ex
              \kern -0.6em \intop}\nolimits_{#1}^{#2}}%
      {\mathop{\kern 0.1em\vrule width 0.5em height 0.69678ex depth -0.60387ex
              \kern -0.6em \intop}\nolimits_{#1}^{#2}}}
\renewcommand*{\cdots}{%
  \mathinner{{\cdotp}{\cdotp}{\cdotp}}%
}
\numberwithin{equation}{section}
\title[Bilinear Wavelet Representation]{Bilinear Wavelet Representation of Calder\'on-Zygmund Forms}
	\author[F.\ Di Plinio]{Francesco Di Plinio} 
\address[F.\ Di Plinio]{Dipartimento di Matematica e Applicazioni, Universit\`a di Napoli \\ \newline \indent Via Cintia, Monte S.\ Angelo 80126 Napoli, Italy}
\email{{\textnormal{francesco.diplinio@unina.it}}}
\thanks{F. Di Plinio was partially supported by the National Science Foundation under the grant NSF-DMS-2000510.}
\author[A.\ W. Green]{Walton Green} 
\author[B.\ D.\ Wick]{Brett D. Wick}
\thanks{B.\ D.\ Wick's research partially supported in part by NSF grant NSF-DMS-1800057 as well as ARC DP190100970.}
\address[A.\ W. Green, B.\ D.\ Wick]{Department of Mathematics, Washington University in Saint Louis\\ \newline \indent 1 Brookings Drive, Saint Louis, Mo 63130, USA}
\email{\textnormal{francesco.diplinio@wustl.edu, bwick@wustl.edu, awgreen@wustl.edu}}
\subjclass[2010]{Primary: 42B20. Secondary: 42B25}
\keywords{Wavelet representation theorem,  bilinear singular integrals, $T(1)$-theorems, sharp weighted bounds, Leibniz rules, fractional differentiation, sparse domination}
\newcommand{\lip}{\langle}
\newcommand{\rip}{\rangle}
\newcommand{\ip}[2]{\langle #1,#2 \rangle}
\DeclareMathOperator{\MSS}{\Pi}
\DeclareMathOperator{\MSSsig}{{\MSS^\sigma}}
\DeclareMathOperator{\MPS}{{\MSS}}
\DeclareMathOperator{\BMO}{{BMO}}
\newcommand{\cals}{\mathcal{S}}
\renewcommand{\S}{{\mathcal{S}}}
\newcommand{\vl}{\vec \ell}
\newcommand{\varu}{\vartheta}
\begin{document}
\maketitle

\begin{abstract}
We represent a bilinear Calder\'on-Zygmund operator at a given smoothness level as a finite sum of cancellative, complexity zero operators, involving smooth wavelet forms, and continuous paraproduct forms. This representation results in a sparse  $T(1)$-type bound, which in turn yields directly new sharp weighted bilinear estimates  on Lebesgue and Sobolev spaces. Moreover, we apply the representation theorem to study fractional differentiation of bilinear operators, establishing Leibniz-type rules in weighted Sobolev spaces which are new even in the simplest case of the pointwise product.
\end{abstract}


\section{Introduction}
Wavelet decompositions play a central role in the study of singular operators on real variable function spaces.  Haar wavelet techniques finding their roots in the works of Figiel \cite{figiel90}, Nazarov-Treil-Volberg \cite{NTV2}, Petermichl \cite{PetAJM} among others, have led to a powerful and comprehensive theory of   singular integrals on Lebesgue spaces, most prominently for Calder\'on-Zygmund operators (CZOs). Smooth wavelets, sometimes called smooth atoms or molecules, have similarly powered the study of mapping properties of linear and multilinear singular operators on smoothness scales such as the Sobolev, Besov and Triebel-Lizorkin scales, see e.g. \cite{frazier88}, the more recent \cite{HO17} and references therein. Our approach herein seeks to unify these two perspectives.

The driving  result of this article, continuing the theme from \cite{diplinio20}, is a representation of bilinear Calder\'on-Zygmund operators in terms of model operators which reflects the eventual additional smoothness of their off-diagonal kernel. This representation is realized as a sum of continuous paraproduct forms and finitely many cancellative forms which are themselves smooth bilinear  Calder\'on-Zygmund operators.
To wit, the cancellative components of our decomposition, which we term \textit{wavelet forms}, are completely diagonalized forms with respect to a suitable wavelet-type basis. Furthermore, each wavelet form should be 
viewed as a certain approximate projection in the frequency domain. Our prototypical one is
	\begin{equation}\label{eq:intro-wave} U(f,g,h) = \int_0^\infty \int_{\R^d} \l f \otimes g,\nu_{w,t} \r \l h,\phi_{w,t} \r \frac{\d w \, \d t}{t}. \end{equation}
In this formula, $\phi_{w,t} = t^{-d} \phi(\frac{\cdot-w}{t})$ for a smooth wavelet $\phi$, while $\nu_{w,t}$ behaves like the tensor product of two smooth wavelets translated by $w$ and dilated by $t$. We say that $\nu_{w,t}$ belongs to the wavelet class   $\Psi^{k,\delta;1,0}_{w,t}$ defined  below: the  cancellation structure of this class reflects the ``low-high-high'' component of the resolution of the pointwise product. 
The 
diagonal nature of the representation of bilinear CZOs is obtained at the expense of converting the compactly supported wavelets occurring in the resolution of the bilinear identity into non-compactly supported wavelets $\nu_{w,t}$ via the \textit{wavelet averaging lemma}, Lemma \ref{lemma:star} below.

Next, we explain the advantages of our representation. The wavelet and paraproduct model operators are dominated by  intrinsic  localized  forms which in turn satisfy a sharp form of \emph{sparse bounds}. Sparse domination, a technique originating from the early work of Lerner   \cite{lerner-a2-13} and then  developed by several authors within and beyond Calder\'on-Zygmund theory, see e.g. \cite{BC2020,culiuc2018domination,lacey-a2-17,Ler2016}, subsumes the full range, and  the sharp quantification of the weighted norm inequalities for  the operator under sparse control.   Thus, in combination with sparse bounds, our  representation theorem yields a variety of novel bilinear, weighted and sharply quantified $T(1)$-theorems on smoothness spaces. We exemplify this paradigm by the loosely described weighted Sobolev theorem that follows,  summarizing the results of Section \ref{sec:sob}. Let us informally introduce a few definitions.

A bilinear operator $T$ is a $(0,0,\delta)$ CZO if its off-diagonal kernel satisfies   standard bilinear $\delta$-kernel estimates, while $T$ satisfies both the bilinear weak boundedness property and bilinear $T(1)$ testing conditions. These are standard conditions under which the Lebesgue space mapping properties of $T$ are now well understood. We generalize to $(k_1,k_2,\delta)$ CZOs whose kernels are $k_1+k_2$ times differentiable, with appropriate decay estimates and in addition to the weak boundedness testing condition, satisfy an iterative testing condition on monomials $x^\gamma$, producing elements $b_\gamma^i \in \BMO$ for $i=0,1,2$ and $\gamma=(\gamma_1,\gamma_2)$ with $|\gamma_j| \le k_j$.   Sections \ref{subsec:si} and \ref{subsec:cz} contain the the precise definitions.

\begin{theorem*} Let $k_1,k_2 \in \N$,  $\delta>0$,   $0 \le \sigma \le \min\{k_1,k_2\}$,  
$1<p_1,p_2 \le \infty$, $\frac{1}{2}<p\coloneqq\frac{p_1p_2}{p_1+p_2}<\infty$. Let $T$ be a $(k_1,k_2,\delta)$ CZO on $\R^d$ such that $D^{\sigma-|\gamma|}b^0_\gamma \in \BMO$ for all $|\gamma| \le \sigma$.   Suppose that the weight vector $\vec v=(v_1,v_2,v_3)$ satisfies with $\vec p=\left(p_1,p_2,\textstyle \frac{p}{p-1}\right)$
	\[ \begin{array}{c} \sigma \in \{0,1,\ldots,d-1\} \cup [d,\infty) \\ \vec v \in A_{\vec p} \end{array} 
		\quad \mbox{ or } \quad
	\begin{array}{c}  \sigma \in (\frac{d}{p}-d,d) \\ \vec v \in A_{\vec p,\vec r},1 \le r_i < p_i, i=1,2 \\ \frac 1{r_1} + \frac 1{r_2} < \frac{\sigma + d}{d}, r_3=1. \end{array}  \]
Then there exists $C>0$ such that
	\begin{equation}\label{eq:intro} \left\|\frac{D^\sigma T(f_1,f_2)}{v_3}\right\|_{L^p(\R^d)} \le C \|f_1 \|_{W^{\sigma,p_1}(v_1)}\|f_2\|_{W^{\sigma,p_2}(v_2)}.\end{equation}
The constant $C$ depends on the parameters above, the operator $T$, and the appropriate weight characteristic of the weight vector $\vec v$.
\end{theorem*}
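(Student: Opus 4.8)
The plan is to deduce the weighted Sobolev bound \eqref{eq:intro} from the representation theorem together with the sparse bounds alluded to in the introduction. The first step is to reduce the left-hand side to a singular-integral estimate: writing $D^\sigma T(f_1,f_2)$, I would commute the fractional derivative $D^\sigma$ with the operator $T$ up to lower-order terms governed by the $\BMO$ functions $b^0_\gamma$. Concretely, the iterative testing conditions producing $D^{\sigma-|\gamma|}b^0_\gamma \in \BMO$ are exactly what is needed to show that $D^\sigma T(f_1,f_2)$ can be written as a finite sum of terms of the form $\widetilde T(D^{\alpha}f_1, D^{\beta}f_2)$ with $\alpha+\beta \le \sigma$ and $\widetilde T$ a $(0,0,\delta')$ CZO (or a paraproduct against one of the $b^0_\gamma$). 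This is the ``Leibniz rule at the level of the operator'' — it is the step where the hypothesis $\sigma \le \min\{k_1,k_2\}$ and the differentiability of the kernel are consumed. At this point the problem is reduced to proving unweighted-then-weighted $L^{p_1}\times L^{p_2}\to L^p$ bounds for bilinear $(0,0,\delta)$ CZOs and bilinear paraproducts, since $\|f_j\|_{W^{\sigma,p_j}(v_j)} \approx \|D^\sigma f_j\|_{L^{p_j}(v_j)} + \|f_j\|_{L^{p_j}(v_j)}$.

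The second step is to invoke the bilinear wavelet representation theorem: each $\widetilde T$ is a finite sum of continuous paraproduct forms and wavelet forms $U$ as in \eqref{eq:intro-wave}, each of which is itself a complexity-zero smooth bilinear CZO. The third step is to apply the sparse domination: each wavelet form $U$ and each paraproduct form is pointwise (or in the dual-form sense) dominated by a bilinear sparse form $\sum_{Q\in\S} |Q| \langle f_1\rangle_{Q,r_1}\langle f_2\rangle_{Q,r_2}\langle h\rangle_{Q,r_3}$ with $r_3=1$ and $r_1,r_2$ as close to $1$ as the smoothness of the wavelet allows — here the exponents $r_i$ in the hypothesis, and the restriction $\tfrac1{r_1}+\tfrac1{r_2} < \tfrac{\sigma+d}{d}$, enter because the additional smoothness $\sigma$ of the kernel of $D^\sigma T$ buys improved (sub-$L^1$ averaging) sparse bounds, i.e. one can lower $r_1,r_2$ below $1$'s dual range at the cost of $\sigma$ derivatives. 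The case split in the statement — $\sigma$ a small integer or $\sigma \ge d$ with $A_{\vec p}$, versus fractional $\sigma$ with the $A_{\vec p,\vec r}$ multilinear Muckenhoupt class — mirrors exactly the two regimes in which these sparse forms are available.

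The final step is routine once the sparse bound is in hand: the quantitative weighted theory for bilinear sparse forms (the multilinear $A_{\vec p,\vec r}$ theory) gives $\big\| \sum_Q |Q|\langle f_1\rangle_{Q,r_1}\langle f_2\rangle_{Q,r_2} \big/ v_3\big\|_{L^p} \lesssim \|f_1\|_{L^{p_1}(v_1)}\|f_2\|_{L^{p_2}(v_2)}$ for $\vec v \in A_{\vec p,\vec r}$, and specializes to the classical $A_{\vec p}$ statement when $r_1=r_2=1$ — which covers the first alternative ($\sigma$ integer $<d$ or $\sigma\ge d$). One assembles the finitely many pieces, absorbs the lower-order $L^{p_j}(v_j)$ terms the same way, and obtains \eqref{eq:intro} with $C$ depending only on the listed data. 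I expect the main obstacle to be the first step: making the commutation $D^\sigma \circ T = \sum \widetilde T_\gamma \circ (D^{\alpha}\otimes D^{\beta}) + (\text{paraproducts with } b^0_\gamma)$ rigorous and verifying that each resulting $\widetilde T_\gamma$ genuinely satisfies the complexity-zero smooth CZO hypotheses needed to feed the representation theorem — in particular tracking that the testing conditions transform correctly under $D^\sigma$ and that no loss of the required kernel smoothness $\delta$ occurs. The weighted endgame, by contrast, is essentially a citation once the correct sparse exponents $r_i$ are identified.
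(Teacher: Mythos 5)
The ingredients you list---representation theorem, sparse domination of the model forms, multilinear $A_{\vec p,\vec r}$ extrapolation---are the right ones, but the order in which you propose to use them is inverted, and this is not a cosmetic difference. Your first step, commuting $D^\sigma$ past $T$ to produce $D^\sigma T(f_1,f_2)=\sum \widetilde T_\gamma(D^\alpha f_1,D^\beta f_2)+\text{paraproducts}$ with each $\widetilde T_\gamma$ a $(0,0,\delta')$ CZO, is itself a Leibniz-rule statement \emph{at the operator level} and there is no mechanism in the paper (or in general) to establish it: $D^\sigma$ is a nonlocal Fourier multiplier, $T$ is given only by kernel bounds and monomial testing conditions, and the composition $D^\sigma\circ T$ admits no a priori kernel decomposition you could hand back to the representation theorem. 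You flag this yourself as ``the main obstacle''; it is a genuine gap, not a technical detail. The paper does exactly the reverse: it applies the representation theorem to $\Lambda$ once, while it is still a $(k_1,k_2,\delta)$ CZ form with its high-order smoothness intact, and only then deals with $D^\sigma$ piece by piece, \emph{inside} each wavelet form and paraproduct, where the commutation is straightforward. For a wavelet form one integrates by parts in the formula $\int\langle f\otimes g,\nu_z\rangle\langle D^\sigma h,\phi_z\rangle\,\d\mu(z)$, trading $D^\sigma$ on the third slot for $D^{\pm\sigma}$ on the others, using that the $\Psi$-classes are stable (after renormalizing by powers of $t$) under fractional (anti)differentiation. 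For a paraproduct $\Pi_{b^0_\gamma,\gamma}$ of order $|\gamma|<\sigma$, only $|\gamma|$ derivatives can fall on the inputs and the remaining $\sigma-|\gamma|$ land on the symbol, which is exactly where the hypothesis $D^{\sigma-|\gamma|}b^0_\gamma\in\BMO$ is consumed. Your proposal tries to use that hypothesis to justify a global operator-level commutation---a much stronger, unproved claim.

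There is a second, independent problem: if you first flatten $\widetilde T_\gamma$ to a $(0,0,\delta')$ CZO and only then apply the representation theorem, the resulting wavelets have smoothness $\delta'$ only, and the improved sub-$L^1$ sparse exponents $\frac1{r_1}+\frac1{r_2}<\frac{\sigma+d}{d}$ do not follow from that. In the paper, those exponents come from the observation that $D^\sigma$ applied to a wavelet in the high-smoothness class $\Psi^{k,\delta;1,0}_z$ with $k>\sigma$ lands in the limited-decay class $\Psi^{\sigma,k,\delta;1,0}_z$, whose members decay only like $\langle x\rangle^{-(d+\sigma)}$ in the first variable, and that decay rate is precisely what governs the sparse $(1,p_2,p_3)$ bound. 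You need the full $k$-smoothness of the wavelet to survive into the representation; a $(0,0,\delta')$ reduction discards it. So even if the commutation step were somehow patched, the fractional sparse endgame would be unfounded.
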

{
The bilinear Muckenhoupt  weight vectors, that is,  the classes $A_{\vec p}, A_{\vec p,\vec r} $ appearing in the statement, are  explicitly defined  in Subsection \ref{ss:wclasses}. They were first introduced in \cite{lerner2009new}, and later appeared in \cite{ChTW2017,culiuc2018domination,CDPOBP},  as the natural  multilinear substitute for the role of of the classical $A_p$ in linear Calder\'on-Zygmund theory. In the subsequent articles \cite{nieraeth2019quantitative,li2020extrapolation,Martext2021}, a  complete and useful extrapolation theory for these classes  was developed. These references also contain details on the relation between $ A_{\vec p,\vec r} $  and the linear classes $A_p$. The inhomogeneous weighted Sobolev spaces $W^{\sigma,q}(v)$ are defined by the norm 
	\[ \|f\|_{W^{\sigma,q}(v)} = \left\|[D^\sigma f] v\right\|_{L^q(\R^d)} + \sum_{k=0}^{\lfloor \sigma \rfloor} \|[D^kf] v\|_{L^q(\R^d)}, \quad \]
where $D^\sigma$ is the fractional derivative defined by the Fourier multiplier $m(\xi) = |\xi|^\sigma$.}
The statement above shows how our representation theorem unifies the treatment of smooth CZOs with  fractional differentiation. To see this, note that taking $T$ to be the pointwise product operator returns a form of the Coifman-Meyer-Kenig-Stein-Grafakos-Torres fractional Leibniz rule; cf.\ the excellent survey \cite{Gr2017} and references therein.

Bilinear representations of dyadic-probabilistic type, originating from Hyt\"onen's theorem \cite{hytonen12}, have been developed more recently \cite{li2019bilinear} to obtain  results of this type in the case $\sigma=0$: see also  the recent works \cite{DPLiMV1,DPLiMV2} for multilinear representations in UMD spaces and previous works \cite{li2014sharp,lerner2019intuitive} on sharp weighted norm inequalities for multilinear operators not reliant on representation formulas. When $\sigma$ is a positive integer, boundedness on Sobolev, Besov and Triebel-Lizorkin   spaces is known for certain cancellative CZOs in the Banach range \cite{benyi2003bilinear,maldonado09}. A complete multilinear, sharp weighted theory on fractional smoothness spaces, and with full treatment of the paraproducts was unknown prior to this work. In the previously given example, the dependence of $C$ in (\ref{eq:intro}) upon the characteristic of the weight $\vec v$ is sharply quantified \cite{li2020extrapolation,nieraeth2019quantitative,li2014sharp,lerner2019intuitive}. See the statement of Theorem \ref{thm:sob-min} below for the explicit form.

On the other hand, the fractional derivative $D^\sigma$ applied to $T(f_1,f_2)$ has received renewed interest since its initial study by Kato-Ponce \cite{kato1988commutator} and Kenig-Ponce-Vega \cite{kenig1993well} when Grafakos-Oh \cite{grafakos2014kato} and Muscalu-Schlag \cite{muscalu2013classical} independently extended the $L^{p_1} \times L^{p_2} \to L^p$ results to the sharp bilinear range, taking $p<1$. 
 
Since then,  Leibniz rules  for Fourier multiplier operators and certain pseudodifferential operators have also been obtained. Prior    weighted estimates require the memberships $v_i \in A_{p_i}$ for each single weight \cite{muscalu2013classical,cruz-uribe2016kato,hart2018smoothing,brummer2018bilinear,brummer2019weighted,naibo2019coifman}. This condition is strictly more restrictive than membership of the weight vector $\vec v$ to the multilinear weighted classes    $A_{\vec p} $ required in our theorem, so that strictly speaking (\ref{eq:intro}) is new even when $T$ is the identity. Furthermore, the class of smooth CZOs which we consider includes both smooth Fourier multipliers and certain classes of pseudodifferential operators.

\subsection*{Organization}
The paper is almost entirely self-contained. We only use the well-known principle that sparse domination implies sharp weighted Lebesgue space bounds as a black box. One can consult \cite{li2020extrapolation,nieraeth2019quantitative} for a precise statement, but we also refer to some of the pioneering works \cite{lerner-a2-13,li2014sharp,lerner2019intuitive,conde-alonso2016pointwise} concerning this principle. Otherwise, we do not appeal to dyadic or linear representation or $T(1)$ theorems, abstract sparse domination results, or the Coifman-Meyer multiplier theorem. We will need two technical lemmas from the study of the linear wavelet representation theory in \cite{diplinio20} on the boundedness of the intrinsic square function and the almost-orthogonality of the wavelet classes. 

We begin by recalling the Calder\'on reproducing formula and extending it to a certain multilinear setting, using high-low cancellation. In the same section, we introduce the linear and multilinear wavelet classes, $\Psi_z$, and prove the key wavelet averaging Lemma \ref{lemma:star}. This lemma  allows us to avoid wavelet  operators with \emph{complexity}, in the language of \cite{hytonen12}, completely diagonalizing the CZO.
In Section \ref{sec:rep}, we state the technical definitions of CZOs and higher-order paraproducts, which are smooth testing conditions of $T(1)$-type. After this, we prove the off-diagonal estimates and deduce the representation theorem. Section \ref{sec:sob} is devoted to applications of the representation theorem, specifically to obtain the weighted Sobolev and fractional Sobolev space bounds as a consequence of the sparse domination of the intrinsic forms the latter being proved in Section \ref{sec:sparse}. Section \ref{sec:asym} contains an asymmetric formulation of the results and the extension from bilinear to $m$-linear operators. We conclude with some remarks and further questions in Section \ref{sec:comments}.

\section{Wavelets}\label{sec:wavelets}
To facilitate the description of our wavelet system a few pieces of notation need to be introduced. We work with a fixed dimension $d\geq 1$, thus the space of Schwartz functions $ \mathcal S(\R^d) $ is simply denoted by $\cals$ when no confusion arises.  The Fourier transform  $\mathcal F:\cals \to \cals  $ is normalized as
\[ (\mathcal F \phi)(\xi) =\widehat \phi(\xi) =  \frac{1}{(2\pi)^{\frac d2}} \int_{\R^d} \phi(x)\e^{-ix\xi} \,  \d x .\]
With the above normalization, $ \mathcal F \widehat \phi = \phi (-\cdot)$. 
The affine group $ Z^d\coloneqq \R^d \ltimes (0,\infty) $ acts on $\phi \in L^1_{\mathrm{loc}}(\R^d)$  unitarily by
 \[
 \Sy_z\phi(\cdot) = \phi_{z}(\cdot) \coloneqq\frac{1}{t^d}  \phi\left(\frac{\cdot-w}{t}\right), \qquad z=(w,t)\in Z^d .\] A function $\phi\in  \mathcal S$ is \textit{admissible} if	\begin{equation}\label{eq:adm} \int_0^\infty |\widehat \phi(\rho \xi)|^2 \frac{\d\xi}{\xi}=1 \end{equation}
for all $\rho \in \mathbb{S}^{d-1}$. If $\phi$ is admissible, the
\textit{Calder\'on formula}
\begin{equation}\label{eq:calderon} f = \int_{Z^d} \lip f,\phi_z \rip \phi_z\, \d\mu(z) \qquad \forall f\in \mathcal S  \end{equation}
holds; see e.\ g.\ \cite{frazier91}. Here and in what follows, $\mu$ is the invariant measure on $Z^d$ given by \[
\int\displaylimits_{Z^d}  f(z) \, \d \mu(z) =\int\displaylimits_{\R^d \times(0,\infty)} f(w,t) \, \frac{\d w \d t}{t}, \qquad f\in \mathcal C_{0}(Z^d).
\]
The admissibility condition (\ref{eq:adm}) implies that $\phi$ has mean zero. In general,  our wavelets are required to have more cancellation. Denote by
	\[ \cals_j = \{ \phi \in\cals : \int x^\alpha \phi(x) \, dx = 0 \mbox{ for } 0 \le |\alpha| \le j \}.\]
For functions $\phi \in \cals_j$, and $\gamma \in \N^d$, $0 \le| \gamma| \le j$, define the anti-derivative of order $\gamma$ to be the Fourier multiplier 
	\[ \partial^{-\gamma} \phi(x) = \int_{\R^d} \frac{\xi^\gamma}{|\xi|^{2|\gamma|}} \widehat \phi(\xi) \e^{ix\xi} \, \d\xi, \qquad x\in \R^d.\]
If $\phi \in \cals_j$, then $|\xi|^{-j} \widehat \phi(\xi)$ is bounded for all $\xi$ (specifically close to zero) so the integral defining $\partial^{-\gamma} \phi$ converges absolutely. Denote by $\partial^\gamma$ the usual partial differentiation operator. This  can also be written, for $f \in \cals$,
	\[ \partial^\gamma f(x) = \int_{\R^d} \xi^\gamma \widehat f(\xi)\e^{ix\xi} \, \d \xi, \qquad x\in \R^d.\]
By Plancherel's theorem, for $f \in \cals$ and $\phi \in \cals_j$, the integration by parts formula
	\[ \lip f, \phi \rip = \sum_{|\gamma|=j} \lip \partial^\gamma f, \partial^{-\gamma} \phi \rip \] holds. The symbol
$D^\sigma$ stands for the fractional differentiation operator, namely the Fourier multiplier $m(\xi)=|\xi|^\sigma$ for any $\sigma$ real. We will also utilize the Japanese bracket $\l x \r = 1+|x|$ and the fact that it is equivalent to $\max\{1,|x|\}$ and for $x = (x_1,\ldots,x_m) \in (\R^d)^m$, $\l x \r \sim \max\{1,|x_1|,\ldots,|x_m|\}$. 
\begin{definition}
Let  $D$ be a nonnegative integer. We say $\phi\in \mathcal S_{2D}$ is a mother wavelet if $\phi$ is supported in $B(0,1/2)$, admissible, and 
for all $0 \le |\alpha| \le D$
	\[ \partial^{\pm \alpha} \phi \in \cals_D. \]
If a function is radial, Schwartz, and mean zero, then it only needs to be normalized so that (\ref{eq:adm}) holds. So the admissibility condition can more or less be dropped from the definition. Such wavelets can be constructed as $\Delta^{4D} \Phi$ where $\Phi \in C^\infty_0(B(0,1/2))$.
\end{definition}

Crucial to our program are the functions which behave like wavelets in their scale and decay, similar to the so-called molecules of Frazier, Jawerth, and Weiss \cite{frazier91,frazier88}. Accordingly, introduce the norm
	\begin{equation}\label{eq:star-norm} \|\varphi\|_{\star,\eta,\delta} := \sup_{x \in \R^d} \l x \r^{d+\eta} |\varphi(x)| + \sup_{x \in \R^d, 0 \le |h| \le 1} \l x \r^{d+\eta} \dfrac{|\varphi(x+h)-\varphi(x)|}{|h|^\delta}. \end{equation}
\begin{definition}\label{def:psi}
The wavelet class $\Psi^{k,\delta;1}_z$ is defined by
	\[ \{ \varphi \in \mathcal{C}^k(\R^d) : t^{|\gamma|} \|(\Sy_z)^{-1} \partial^\gamma \varphi \|_{\star,|\gamma|,\delta} \le 1\mbox{ for } 0 \le |\gamma| \le k\} \]
and its cancellative subclass is given by \[
\Psi^{k,\delta;0}_z = \{\varphi \in \Psi^{k,\delta;1}_z : \int x^\alpha \phi(x) =0 \mbox{ for } 0 \le |\alpha| \le k \}.\] Notice that  $\phi_z \in \Psi^{k,1;1}_z$, $\psi_z \in \Psi^{k,1;0}_z$ whenever $\phi\in \cals,\psi\in \cals_k$  are suitably normalized.

The study of bilinear operators requires a suitable tensor-type class $\Psi^{k,\delta;1,1}_{z}$. First, for functions in $L^1_{loc}(\R^d \times \R^d)$,  write $\Sy^j_z$ for the action of $z \in Z^d$ on the $j$-th copy of $\R^d$,  $j=1,2$. Then, $\Psi^{k,\delta;1,1}_z$ is the collection of all $\phi \in  \mathcal{C}^k(\R^{2d})$ which satisfy the estimates
	\[ t^{|\gamma|} \|(\Sy^1_z\Sy^2_z)^{-1} \partial^\gamma \phi \|_{\star,k,\delta} \le 1 \]
for all $\gamma \in \N^{2d}$, $0 \le |\gamma| \le k$. The norm is defined by (\ref{eq:star-norm}) but replacing $\R^d$ with $\R^{2d}$.
\end{definition}

This norm is larger than purely tensoring the norm $\|\cdot\|_{\star,\eta,\delta}$ which, in fact, is not enough for the $L^p$ boundedness of our intrinsic form in the full multilinear range of exponents (see Section \ref{sec:sparse}).
To demonstrate the usefulness of this class, we introduce the cancellative intrinsic forms which will be used in our representation. For $\nu_z \in \Psi^{0,\delta;1,1}_z$ and $\phi \in {\mathcal C_0^\infty}(B(0,1))$, define 
	\[ U(f,g,h) = \int_{Z^d} \l f \otimes g, \nu_{z} \r \l h, \phi_z \r \, \d\mu(z). \]
The form $U$ can be written as $U(f,g,h)=\l K, f \otimes g \otimes h \r$ with kernel
	\[ K(x_0,x_1,x_2) = \int_{Z^d} \nu_z(x_1,x_2) \phi_z(x_0) \, \d\mu(z). \]
For the size estimate on $K$ fix $x_0,x_1,x_2$ and  divide the integration in $t$ into the two regions $L=\{2t< \max_{j}|x_j-x_0|\}, $ $ L^c = (0,\infty) \backslash L.$ Using the fact that
	\[ \nu_z(x_1,x_2) \le \frac{1}{t^{2d} \max\{1,\frac{|x_1-w|}{t},\frac{|x_2-w|}{t}\}^{2d+\delta}} = \frac{t^\delta}{\max_{j \ne 0}\{t,|x_j-w|\}} \]
and that $\phi$ is supported in $B(0,1)$,
	\[ \begin{aligned} |K(x_0,x_1,x_2)| &\le \int_{t \in L} +\int_{t \in L^c} \int_{w \in B(x_0,t)} \frac{t^\delta}{\max_{j \ne 0}\{t,|x_j-w|\}^{2d+\delta}} t^{-d}\phi\left(\frac{x_0-w}{t}\right) \dfrac{\d w \, \d t}{t} \\
		&\lesssim (\max_{j }|x_j-x_0|)^{-2d}. \end{aligned}\]
A similar H\"older estimate can also be proved. More precisely, to use terminology of Section \ref{subsec:si} below, $U$ is a $(0,\delta)$ singular integral (SI) form. If $\nu_z \in \Psi^{k+\delta;1,1}_z$ then $U$ is a $(k,\delta)$ SI form. 

While admissible wavelets themselves satisfy the remarkable orthogonality properties which yield the Calder\'on reproducing formula (\ref{eq:calderon}), the elements of the wavelet class satisfy the following almost orthogonality estimate.
\begin{lemma}\label{lemma:almost}
\cite[Lemma 2.3]{diplinio20} Let $0 < \eta <\delta \le 1$, $0 \le k \le D$, and $s \ge t$. Set $z=(w,t)$ and $\zeta=(v,s)$. Then 
	\[ \sup_{\nu_z \in \Psi_{z}^{k,\delta;0}} \sup_{\theta_\zeta \in \Psi_{\zeta}^{k,\delta;1}} |\l \nu_z,\theta_\zeta \r| \lesssim \frac{t^{k+\eta}}{\max\{s,|v-w|\}^{d+k+\eta}}
	\]
	and for a mother wavelet $\phi$, 
	\[ \sup_{\theta_\zeta \in \Psi_{\zeta}^{k,\delta;1}} |\l \phi_{z}, \theta_\zeta \r | \lesssim \frac{t^{k+\delta}}{\max\{s,|v-w|\}^{d+k+\delta}}. \]
\end{lemma}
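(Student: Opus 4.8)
The plan is to prove both estimates by the standard ``cancellative bump against molecule'' almost‑orthogonality scheme, the roles being played by the cancellative $\nu_z$ (respectively the mother wavelet $\phi_z$) at the smaller scale $t$ and by the general element $\theta_\zeta$ at the larger scale $s\ge t$, with everything organized around the separation parameter $R:=\max\{s,|v-w|\}$.

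The first step is to translate the $\star$-norm of Definition \ref{def:psi} into usable pointwise information. Undoing the dilations: for $\theta_\zeta\in\Psi^{k,\delta;1}_\zeta$ and every multi-index $|\gamma|\le k$ one gets a size bound $|\partial^\gamma\theta_\zeta(x)|\lesssim(s+|x-v|)^{-d-|\gamma|}$ together with a quantitative $\delta$-Hölder bound at scale $s$, namely $|\partial^\gamma\theta_\zeta(x+h)-\partial^\gamma\theta_\zeta(x)|\lesssim s^{-\delta}|h|^\delta(s+|x-v|)^{-d-|\gamma|}$ for $|h|\le s$; while $\nu_z\in\Psi^{k,\delta;0}_z$ is a function concentrated at scale $t$ about $w$ (with the decay furnished by the class) which in addition annihilates every polynomial of degree $\le k$, i.e.\ $\int x^\alpha\nu_z(x)\,\d x=0$ for $|\alpha|\le k$. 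All of this is immediate rescaling.

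The heart of the matter is then to play the cancellation of $\nu_z$ against the smoothness of $\theta_\zeta$: since $\nu_z$ kills polynomials of degree $\le k$, one has $\langle\nu_z,\theta_\zeta\rangle=\langle\nu_z,\theta_\zeta-P^k_w\theta_\zeta\rangle$, where $P^k_w\theta_\zeta$ is the order-$k$ Taylor polynomial of $\theta_\zeta$ centered at the center $w$ of $\nu_z$. Taylor's theorem with the integral form of the remainder, fed the Hölder bound above, yields on a suitable ball about $w$ a pointwise estimate of the shape $|\theta_\zeta(x)-P^k_w\theta_\zeta(x)|\lesssim s^{-\delta}R^{-d-k}|x-w|^{k+\delta}$. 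Inserting this into $\int\nu_z(x)\bigl(\theta_\zeta(x)-P^k_w\theta_\zeta(x)\bigr)\,\d x$, one splits the integral at $|x-w|\sim R$: on the inner region the Taylor bound combines with the scale-$t$ concentration of $\nu_z$ to produce the factor $t^{k+\eta}R^{-d-k-\eta}$, while on the outer region one instead estimates $\theta_\zeta$ and $P^k_w\theta_\zeta$ separately by their size bounds and uses the decay of $\nu_z$ together with $s\ge t$. The bookkeeping of powers of $R$ forces one to absorb a factor $R^{\delta-\eta}$, i.e.\ to spend an arbitrarily small part $\delta-\eta>0$ of the Hölder exponent, in order to close what is otherwise a borderline‑divergent integral; this is exactly why the statement is phrased with $\eta<\delta$ rather than $\eta=\delta$.

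The second estimate is the same argument made easier: a mother wavelet $\phi$ is supported in $B(0,1/2)$ and has vanishing moments to high order, so $\phi_z$ is supported in $B(w,t/2)$ and $\langle\phi_z,\theta_\zeta\rangle=\langle\phi_z,\theta_\zeta-P^k_w\theta_\zeta\rangle$ is confined to that ball; there the remainder is $\lesssim s^{-\delta}R^{-d-k}(t/2)^{k+\delta}$ and $\|\phi_z\|_{L^1}\lesssim 1$, so no outer region and no borderline integral occur and one keeps the full exponent $\delta$. I expect the only genuinely delicate point to be the far regime $|v-w|\gg s$: there one must use simultaneously the cancellation of $\nu_z$ and the decay of the derivatives $\partial^\gamma\theta_\zeta$ at the distant point, and control the slowly decaying tail of the molecule $\nu_z$. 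A technically cleaner alternative for that regime is to start from the integration‑by‑parts identity $\langle\nu_z,\theta_\zeta\rangle=\sum_{|\gamma|=k}\langle\partial^\gamma\theta_\zeta,\partial^{-\gamma}\nu_z\rangle$ recalled in Section \ref{sec:wavelets}, which moves the $k$ derivatives onto $\theta_\zeta$, where they improve the off‑diagonal decay to $(s+|x-v|)^{-d-k}$, leaving only the $\delta$-Hölder regularity of $\partial^\gamma\theta_\zeta$ to be used and making the dependence on $R$ transparent.
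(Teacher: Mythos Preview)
The paper does not actually prove this lemma; it is quoted verbatim from \cite[Lemma 2.3]{diplinio20} and used as a black box, so there is no in-paper argument to compare against. Your outline is the standard cancellation-versus-smoothness scheme, and for the second estimate (the mother wavelet case) it is essentially complete: compact support of $\phi_z$ confines the pairing to $B(w,t/2)$, the Taylor remainder bound $\lesssim s^{-\delta}R^{-d-k}|x-w|^{k+\delta}$ applies throughout, and integrating against $\|\phi_z\|_{L^1}\lesssim 1$ gives the stated bound with no loss.

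For the first estimate there is a genuine technical issue your sketch does not confront. By Definition~\ref{def:psi}, a general $\nu_z\in\Psi^{k,\delta;0}_z$ is only guaranteed the borderline decay $|\nu_z(x)|\lesssim(t+|x-w|)^{-d}$, which is \emph{not} integrable. Consequently the moment integrals $\int(x-w)^\alpha\nu_z(x)\,\d x$ you invoke are not absolutely convergent, and in your ``outer region'' the pairing of $\nu_z$ against the polynomially growing $P^k_w\theta_\zeta$ diverges if treated by size alone. Your fallback suggestion, the identity $\langle\nu_z,\theta_\zeta\rangle=\sum_{|\gamma|=k}\langle\partial^\gamma\theta_\zeta,\partial^{-\gamma}\nu_z\rangle$, inherits the same problem: the antiderivative $\partial^{-\gamma}$ is defined in Section~\ref{sec:wavelets} only for Schwartz functions, and a generic $\nu_z$ with $|x|^{-d}$ tails is not in that class. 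A correct treatment must either localize the Taylor subtraction with a cutoff at scale $R$ and track the resulting commutator terms, or exploit that while $\nu_z$ has poor decay, its $k$-th order derivatives have the integrable decay $(t+|x-w|)^{-d-k}$ built into the class; one then runs the Taylor argument in reverse, expanding $\nu_z$ rather than $\theta_\zeta$. Your plan is the right shape, but this borderline-integrability obstacle is where the work lies, and the sketch as written would not close.
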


In the sequel, we will often denote elements of $\Psi^{k,\delta;i}_z$ or $\Psi^{k,\delta;i,j}_z$ by $\nu_z$ or $\theta_z$. This means only that the function $\nu_z$ is associated to a point $z \in Z^d$, \textit{not} that $\nu_z$ is given by the group action $\Sy_z \nu$ for some function $\nu$. Whether the subscript denotes group action or not will be clear from the context, e.g. if $\phi$ is first introduced and then $\phi_z$, of course $\phi_z$ is the group action. If $\nu_z$ is introduced as an element of $\Psi^{k,\delta;1}_z$ then it is a function associated to $z$. There is even less ambiguity since $\phi_z$ is of course a function associated to $z$.

\subsection{Averaging of Wavelets}
We will use the following \textit{wavelet averaging lemma} to diagonalize the wavelet shifts (the continuous analogue of the Haar shifts). 

\begin{lemma}\label{lemma:star}
Let $0<\eta<\delta$ and $H:\R^{3d} \times (0,\infty)^2 \to \mathbb{C}$ be such that 
	\[ |H(u,v,w,s,t)| \lesssim \dfrac{t^{\delta}}{\max\{s,|u-w|,|v-w|\}^{2d+\delta}}. \]
If $\psi,\phi \in \cals$ then
	\[ \nu_{w,t}(x,y) = \int_t^\infty \int_{\R^{2d}}H(u,v,w,s,t) \psi_{u,s}(x) \phi_{v,s}(y) \dfrac{\d u \, \d v \, \d s}{s} \]
satisfies
	\[ \|(\Sy_{z}^1\Sy_{z}^2)^{-1}\nu_{z}\|_{\star,\eta,1} \lesssim 1. \]
\end{lemma}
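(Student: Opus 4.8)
The goal is to show that the averaged function $\nu_{w,t}$ lies (up to a uniform constant) in the tensor wavelet class $\Psi^{0,\delta;1,1}_{w,t}$, i.e.\ that its $\|\cdot\|_{\star,\eta,1}$ norm after undoing the dilation by $z=(w,t)$ is $\lesssim 1$. By the translation and dilation covariance of the whole construction — the kernel bound on $H$ is invariant under simultaneously translating $u,v,w$ and rescaling $s,t$ — I would first reduce to the case $z=(0,1)$, i.e.\ $w=0$, $t=1$. So it suffices to bound
\[
\sup_{(x,y)} \langle (x,y)\rangle^{2d+\eta} |\nu_{0,1}(x,y)| \quad\text{and}\quad \sup_{(x,y),\,0\le|(h,h')|\le 1} \langle (x,y)\rangle^{2d+\eta}\,\frac{|\nu_{0,1}(x+h,y+h')-\nu_{0,1}(x,y)|}{|(h,h')|}.
\]

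\textbf{Size estimate.} With $w=0,t=1$, write
\[
|\nu_{0,1}(x,y)| \le \int_1^\infty \int_{\R^{2d}} \frac{1}{\max\{s,|u|,|v|\}^{2d+\delta}}\,|\psi_{u,s}(x)|\,|\phi_{v,s}(y)|\,\frac{\d u\,\d v\,\d s}{s}.
\]
Since $\psi,\phi\in\cals$, we have $|\psi_{u,s}(x)|\lesssim s^{-d}\langle (x-u)/s\rangle^{-N}$ and similarly for $\phi$, with $N$ as large as we wish. For fixed $s\ge 1$, I would split the $u$-integral according to whether $|u|\le 2s$ (so $\max\{s,|u|,|v|\}\gtrsim s$ there, and one integrates the Schwartz tails in $u,v$, gaining $s^{d}$ back from each) or $|u|>2s$ (where $|u-x|\gtrsim |u|$ once $|x|\lesssim |u|$, and the Schwartz decay of $\psi_{u,s}(x)$ in $|u|$ beats the slowly growing $\max$-term); the $v$-integral is handled in parallel. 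After the $u,v$ integrations one is left with $\int_1^\infty s^{-1-\delta}\,(\text{something})^{-?}\,\d s$ type integrals, and the net outcome is a bound of the form $\langle x\rangle^{-2d-\delta'} $-ish; the slightly non-trivial point is to produce decay jointly in $(x,y)$, i.e.\ in $\max\{|x|,|y|\}=\langle (x,y)\rangle$ up to constants, rather than just in each variable separately. This is exactly where one uses that the constraint on $H$ involves $\max\{s,|u-w|,|v-w|\}$ with a single power $2d+\delta$ (rather than a tensored bound), so that when, say, $|x|$ is large one may either have $|u|$ large — and win from the Schwartz decay of $\psi_{u,s}(x)$ — or $|u|$ small, whence $s$ or $|v|$ must be comparably large to make $\max\{s,|u|,|v|\}$ large only if $|y|$ is also large; tracking this dichotomy gives the joint decay with exponent $2d+\eta$ for any $\eta<\delta$, the loss $\delta\to\eta$ absorbing the logarithmic/borderline factors in $s$.

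\textbf{H\"older estimate.} For the difference quotient I would use the same decomposition but differentiate the smooth factors $\psi_{u,s}$ and $\phi_{v,s}$: since $\psi,\phi\in\cals$ one has $|\psi_{u,s}(x+h)-\psi_{u,s}(x)|\lesssim s^{-d}\,\tfrac{|h|}{s}\,\langle(x-u)/s\rangle^{-N}$ uniformly for $|h|\le 1$ (using also $s\ge 1$ so $|h|/s\le |h|\le|h|^\delta$, actually $\delta=1$ here), and likewise in $y$. Feeding this into the integral produces an extra factor $|h|$ (or $|h'|$), hence $\lesssim |(h,h')|$, while the $u,v,s$ integrations are controlled exactly as in the size estimate (one now pays an extra $s^{-1}$ from the $|h|/s$, which only helps). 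Thus $\|(\Sy_z^1\Sy_z^2)^{-1}\nu_z\|_{\star,\eta,1}\lesssim 1$, which is the claim. I would actually carry out the size and H\"older estimates together, since the smooth factors are treated identically and only the $u,v,s$-bookkeeping matters.

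\textbf{Main obstacle.} The routine part is the Schwartz-tail integration; the genuine point — and the reason the lemma is stated with the single, non-tensored power $2d+\delta$ on $\max\{s,|u-w|,|v-w|\}$ — is extracting \emph{joint} decay in $(x,y)$ with exponent $2d+\eta$. The bookkeeping device is to dominate $\langle(x,y)\rangle^{2d+\eta}$ by (a constant times) $\langle x\rangle^{\theta(2d+\eta)}\langle y\rangle^{(1-\theta)(2d+\eta)}$-type products is \emph{not} available directly; instead one splits the $(u,v,s)$ region of integration according to which of $s,|u|,|v|$ realizes the maximum and, correspondingly, pushes the decay onto $x$ (via $\psi_{u,s}(x)$), onto $y$ (via $\phi_{v,s}(y)$), or onto both (via the $s^{-2d-\delta}$ factor together with the two $s^d$'s recovered from integration), always with an $\eta<\delta$ slack to close the $s$-integral. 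Making this case analysis airtight, uniformly in $(x,y)$ and in $0\le|(h,h')|\le 1$, is the crux; everything else is a direct estimate.
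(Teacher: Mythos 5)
Your plan is sound and shares the skeleton of the paper's proof: reduce to scale one (the paper does this via the change of variables $\alpha=(u-w)/t$, $\alpha'=(v-w)/t$, $\beta=s/t$), use Schwartz decay of $\psi,\phi$, get the Lipschitz bound by differentiating the smooth factors (valid since $s\ge t$ gives $|h|/s\le|h|$), and spend the $\eta<\delta$ slack on a logarithmic $s$-integral. The decomposition you choose differs from the paper's, though. You split by whether $|u|\lessgtr 2s$ (equivalently, which of $s,|u|,|v|$ is largest); the paper instead decomposes the rescaled $(\alpha,\alpha')$ into dyadic annuli $A_j(x,\beta)=B(x,2^{j+1}\beta)\setminus B(x,2^j\beta)$ centered at $x$ (resp.\ $A_i(y,\beta)$ at $y$) and pairs each index $(j,i)$ with the scale threshold $L_{j,i}=\{\beta\le\max(2^{-j-2}|x|,2^{-i-2}|y|)\}$. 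That choice is engineered so that one index set reads off both decay mechanisms at once: $\alpha\in A_j(x,\beta)$ gives the Schwartz bound $|\psi'(x)|\lesssim 2^{-jr}(\beta 2^j)^{-d}$, while the threshold $L_{j,i}$ decides whether $\max\{\beta,|\alpha|,|\alpha'|\}\gtrsim\max\{|x|,|y|\}$ or $\gtrsim\beta$. Because your split does not track $|u-x|$, the two mechanisms decouple and the bookkeeping multiplies.

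One point in your sketch, as written, does not close. In the region $|u|>2s$ you assert $|u-x|\gtrsim|u|$ ``once $|x|\lesssim|u|$''; this fails in the transitional range $|x|\sim|u|$, where $|u-x|$ can be $<s$ and the Schwartz factor $\psi_{u,s}(x)$ gives no decay in $x$. There the whole burden must fall on $H\lesssim\max\{s,|u|,|v|\}^{-2d-\delta}\sim|x|^{-2d-\delta}$, and the leftover $\int_1^{C|x|}ds/s$ produces a $\log|x|$ that the $\eta<\delta$ slack has to absorb. You gesture at this in your ``Main obstacle'' paragraph, but your size-estimate case description omits it, and an airtight proof must treat it explicitly, together with the symmetric bookkeeping for $v,y$ and the asymmetric case where only one of $|x|,|y|$ is large. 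The approach is workable, but the crux you rightly single out --- joint decay in $(x,y)$ --- is deferred rather than carried out, whereas the paper's annulus-plus-threshold indexing finishes it in one uniform computation.
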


\begin{proof} Let us fix $w,t$ and simply write $\nu$ for $\nu_{w,t}$. Make the changes of variable $\alpha=\frac{u-w}{t}$, $\alpha'=\frac{v-w}{t}$ and $\beta=\frac st$. In this way,
\begin{equation}\label{eq:var}  (\Sy_{w,t}^1\Sy_{w,t}^2)^{-1}\nu(x,y) = \int_{1}^\infty \int_{\R^{2d}}  H'(\alpha,\alpha',\beta)\psi'(x)\phi'(y) \dfrac{\d\alpha \, \d \alpha' \, \d\beta}{\beta} \end{equation}
where $\psi'=\Sy_{\alpha,\beta}\Sy_{u,s}^{-1}\psi_{u,s}$, $\phi'=\Sy_{\alpha',\beta}\Sy_{v,s}^{-1}\phi_{v,s}$ and $H' \lesssim \max\{\beta,|\alpha|,|\alpha'|\}^{-(2d+\delta)}$. We have suppressed the dependence of $\psi'$ and $\phi'$ on $\alpha,\alpha',\beta$. 

We first get the size estimate on $\nu$. Since $\phi'$ and $\psi'$ are not assumed to have compact support, we decompose into annuli and divide the scale parameter $\beta$ accordingly.
	\[	L_{j,i} = \{\beta > 1: \beta \le \max\{2^{-(j+2)}|x|,2^{-(i+2)}|y|\} \}, \quad \mbox{and} \quad L_{j,i}^c =(1,\infty) \backslash L_{j,i}. \]
Then, for each $\beta>1$, define the annuli $A_j(x,\beta) = B(x,2^{j+1}\beta) \backslash B(x,2^j\beta)$ for $j \ge 1$ and $A_0(x,\beta) = B(x,2\beta)$.
Let $\alpha \in A_j(x,\beta)$ and $\alpha' \in A_i(y,\beta)$, then 
	\[ \max\{\beta,|\alpha|,|\alpha'|\} \ge \left\{ \begin{array}{cl} \frac 12 \max\{|x|,|y|\} & \mbox{if } \beta \in L_{j,i}, \\ \beta & \mbox{if } \beta \in L_{j,i}^c. \end{array} \right. \]
We obtain for $j \ge 1$, that if $\alpha \in A_j(x,\beta)$, $\beta \in L_{j,i}$ then for any $r>0$
	\[ |\psi'(x)| \le \beta^{-d}(1+\frac{|x-\alpha|}{\beta})^{-d-r} \le \beta^{-d}(1+2^{j})^{-d-r} \le 2^{-jr} (\beta 2^j)^{-d}\]
and similarly for $\phi'$, and for $\alpha' \in A_i(y,\beta)$. The estimate also holds when $i$ or $j$ is $0$ simply because $\phi$ and $\psi$ are bounded.
Thus, for each $j,i \ge 0$,
	\[ \int_{A_j(x,\beta)} \int_{A_i(y,\beta)} \dfrac{|\psi'(x) \phi'(y)| \, \d \alpha' \, \d \alpha}{\max\{\beta,|\alpha|,|\alpha'|\}^{2d+\delta}} \lesssim 
	2^{-r(j+i)}\left\{ \begin{array}{cl} 
		\max\{|x|,|y|\}^{-(2d+\delta)} & \beta \in L_{j,i};\\
		\beta^{-(2d+\delta)} & \beta \in L_{j,i}^c.
		\end{array} \right.\]
Therefore,
	\[\begin{aligned} |(\Sy_{w,t}^1\Sy_{w,t}^2)^{-1}\nu(x,y)| &\lesssim \sum_{i,j=0}^\infty 2^{-r(j+i)} \left[\, 
	\int_{L_{j,i}} \max\{|x|,|y|\}^{-(2d+\delta)} \dfrac{\d \beta}{\beta} 
	+ \int_{L_{j,i}^c} \beta^{-(2d+\delta)} \dfrac{ \d \beta}{\beta} \right]\\
		&\lesssim \sum_{i,j=0}^\infty 2^{-r(j+i)}\left[\frac{\log\max\{2^{-j}|x|,2^{-i}|y|\}}{\max\{|x|,|y|\}^{2d+\delta}} + \max\{2^{-j}|x|,2^{-i}|y|\}^{-(2d+\delta)}\right] \\
		&\lesssim \max\{|x|,|y|\}^{-(2d+\eta)} + \sum_{i,j=0}^\infty 2^{-r(j+i)} 2^{\max(j,i)(2d+\delta)} \max\{|x|,|y|\}^{-(2d+\delta)}.
	\end{aligned} \]
Picking $r>2d+\delta$ guarantees convergence of the sum and concludes the proof of the size estimate.
For the H\"older estimate, let $h = (h_1,h_2) \in \R^{d} \times \R^d$. By (\ref{eq:var}) above,
\begin{align*} 
	|(\Sy_{w,t}^1&\Sy_{w,t}^2)^{-1}\nu(x,y) - (\Sy_{w,t}^1\Sy_{w,t}^2)^{-1}\nu(x+h_1,y+h_2)| \\
	&\le \int_1^\infty \int_{\R^{2d}} \dfrac{1}{\max\{\beta,|\alpha|,|\alpha'|\}^{2d+\delta}}\\
	&\hspace{5ex}\times \left(|\phi'(y)| \cdot|\psi'(x)-\psi'(x+h_1)|+ |\psi'(x+h_1)| \cdot |\phi'(y)-\phi'(y+h_2)|\right) \dfrac{\d\alpha \, \d\alpha' \,\d\beta}{\beta}.
\end{align*}

We will only handle the second term as the first is similar. First, assuming that $|h_1|,|h_2|<1$, we obtain the analogous estimate as above: if $\alpha \in A_j(x,\beta)$ and $\alpha' \in A_i(y,\beta)$, then for any $r>0$,
	\[ |\psi'(x+h_1)| \cdot |\phi'(y)-\phi'(y+h_2)| \lesssim 2^{-r(j+i)} \frac{|h_2|}{(\beta 2^j)^d (\beta 2^i)^d}. \]
Following the remaining steps as above proves the result. If $|h_1|$ or $|h_2|$ are larger than one, the H\"older estimate follows from the size estimate.
\end{proof}

An immediate corollary follows. 
\begin{proposition}\label{prop:psi-nu}
Let $k \in \N$, $0<\eta<\delta \le 1$, and $H:\R^{3d} \times (0,\infty)^2 \to \mathbb{C}$ be such that 
	\[ |H(u,v,w,s,t)| \lesssim \dfrac{t^{k+\delta}}{\max\{s,|u-w|,|v-w|\}^{2d+k+\delta}}. \]
Let $\psi,\phi \in \cals$ and define
	\begin{equation}\label{eq:nu} \nu_{w,t}(x,y) = \int_t^\infty \int_{\R^{2d}}H(u,v,w,s,t) \psi_{u,s}(x) \phi_{v,s}(y) \dfrac{\d u \, \d v \, \d s}{s}. \end{equation}
Then, there exists $C>0$ such that $\nu_{w,t} \in C\Psi^{k,\eta;1,1}_{w,t}$. If moreover $\phi \in \cals_k$, then $\nu_{w,t} \in C\Psi^{k,\eta;1,0}_{w,t}$ and $t^{-|\kappa|}\partial^{-\kappa}_y \nu_{w,t} \in \Psi^{0,\eta;1,0}_{w,t}$ for $|\kappa| \le k$.
\end{proposition}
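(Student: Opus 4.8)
The plan is to derive Proposition~\ref{prop:psi-nu} from the wavelet averaging Lemma~\ref{lemma:star}: for each of the three assertions I would rewrite the function in question as an integral of the form \eqref{eq:nu} whose kernel still obeys the hypothesis of that lemma---with a shifted exponent---and whose building blocks are Schwartz. Two bookkeeping observations carry the argument. First, on the integration domain $s\ge t$ one may trade powers of $s$ for powers of $t$: since $t^{a}s^{-a}\le 1$ and $s^{a}\le\max\{s,|u-w|,|v-w|\}^{a}$ for $a\ge 0$, multiplying $H$ by either factor preserves a bound of the form $|H|\lesssim t^{\delta'}\max\{s,|u-w|,|v-w|\}^{-(2d+\delta')}$ for a new exponent $\delta'>0$. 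Second, one has $\|\varphi\|_{\star,a,\eta}\le\|\varphi\|_{\star,a+\eta,1}$, because $\l X\r\ge 1$ and $|h|^{-\eta}\le|h|^{-1}$ for $|h|\le 1$; this upgrades the Lipschitz conclusion of Lemma~\ref{lemma:star} to the H\"older-$\eta$ estimate needed for membership in the wavelet classes. I will also use the scaling identities $\partial^{\gamma}(\psi_{u,s})=s^{-|\gamma|}(\partial^{\gamma}\psi)_{u,s}$ and, for $\phi\in\cals_k$ and $|\kappa|\le k$, $\partial^{-\kappa}(\phi_{v,s})=s^{|\kappa|}(\partial^{-\kappa}\phi)_{v,s}$ (the latter from the Fourier-multiplier definition of $\partial^{-\kappa}$).

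For the membership $\nu_{w,t}\in C\Psi^{k,\eta;1,1}_{w,t}$, I would fix $\gamma=(\gamma_1,\gamma_2)\in\N^{2d}$ with $|\gamma|\le k$ and differentiate \eqref{eq:nu} under the integral sign; this is legitimate, and also gives $\nu_{w,t}\in\mathcal C^k(\R^{2d})$, because the $\partial^{\gamma}$-integral converges absolutely by the size estimates of Lemma~\ref{lemma:star}. It yields
\[ t^{|\gamma|}\partial^{\gamma}\nu_{w,t}(x,y)=\int_t^\infty\!\!\int_{\R^{2d}}\bigl(t^{|\gamma|}s^{-|\gamma|}H(u,v,w,s,t)\bigr)\,(\partial^{\gamma_1}\psi)_{u,s}(x)\,(\partial^{\gamma_2}\phi)_{v,s}(y)\,\frac{\d u\,\d v\,\d s}{s}. \]
By the first observation, $t^{|\gamma|}s^{-|\gamma|}H$ satisfies the hypothesis of Lemma~\ref{lemma:star} with $\delta$ there replaced by $k+\delta$, and $\partial^{\gamma_1}\psi,\partial^{\gamma_2}\phi\in\cals$; applying the lemma with $\delta,\eta$ replaced by $k+\delta,k+\eta$ (note $0<k+\eta<k+\delta$) and then the $\star$-norm comparison gives $t^{|\gamma|}\|(\Sy^1_{w,t}\Sy^2_{w,t})^{-1}\partial^{\gamma}\nu_{w,t}\|_{\star,k,\eta}\lesssim 1$, which for all $|\gamma|\le k$ is exactly the claim.

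Now assume $\phi\in\cals_k$. The cancellation in the $y$-variable is immediate: the substitution $y=v+s\omega$ and $\phi\in\cals_k$ give $\int_{\R^d}y^{\alpha}\phi_{v,s}(y)\,\d y=\sum_{\beta\le\alpha}\binom{\alpha}{\beta}v^{\alpha-\beta}s^{|\beta|}\int_{\R^d}\omega^{\beta}\phi(\omega)\,\d\omega=0$ for $|\alpha|\le k$, hence $\int_{\R^d}y^{\alpha}\nu_{w,t}(x,y)\,\d y=0$ by Fubini; with the previous paragraph this gives $\nu_{w,t}\in C\Psi^{k,\eta;1,0}_{w,t}$, and equivalently the partial Fourier transform $\widehat{\nu_{w,t}}(x,\cdot)$ in $y$ vanishes to order $\ge k+1$ at the origin. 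For the last assertion, with $|\kappa|\le k$, I would move $\partial^{-\kappa}_y$ through the integral onto $\phi_{v,s}$ (on the Fourier side in $y$), obtaining
\[ t^{-|\kappa|}\partial^{-\kappa}_y\nu_{w,t}(x,y)=\int_t^\infty\!\!\int_{\R^{2d}}\bigl(t^{-|\kappa|}s^{|\kappa|}H(u,v,w,s,t)\bigr)\,\psi_{u,s}(x)\,(\partial^{-\kappa}\phi)_{v,s}(y)\,\frac{\d u\,\d v\,\d s}{s}. \]
By the first observation again (now using $s^{|\kappa|}\le\max\{s,|u-w|,|v-w|\}^{|\kappa|}$), the kernel $t^{-|\kappa|}s^{|\kappa|}H$ satisfies the hypothesis of Lemma~\ref{lemma:star} with $\delta$ replaced by $k+\delta-|\kappa|\ge\delta>0$; provided $\partial^{-\kappa}\phi\in\cals$, the lemma (applied with $\eta$, and $0<\eta<\delta\le k+\delta-|\kappa|$) yields the $\star$-bound, while $\int_{\R^d}\partial^{-\kappa}_y\nu_{w,t}(x,y)\,\d y$ is the value at $\xi=0$ of $\xi^{\kappa}|\xi|^{-2|\kappa|}\widehat{\nu_{w,t}}(x,\xi)$, which vanishes by the cancellation just shown; hence $t^{-|\kappa|}\partial^{-\kappa}_y\nu_{w,t}\in C\Psi^{0,\eta;1,0}_{w,t}$.

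The one step I expect to need genuine care is the membership $\partial^{-\kappa}\phi\in\cals$ just invoked: unlike $\partial^{\gamma_j}\psi$, an anti-derivative $\partial^{-\kappa}\phi$ of a generic $\phi\in\cals_k$ need not be Schwartz, since its Fourier transform $\xi^{\kappa}|\xi|^{-2|\kappa|}\widehat\phi(\xi)$ carries a conic singularity of finite order at the origin, so that $\partial^{-\kappa}\phi$ decays only polynomially. This is where the \emph{mother wavelet} structure enters: if $\phi$ is a mother wavelet then $\widehat\phi=|\xi|^{2m}$ times a Schwartz function for some $m\ge|\kappa|$ (for instance $\phi=\Delta^{4D}\Phi$ gives $\widehat\phi=|\xi|^{8D}\widehat\Phi$), so $\xi^{\kappa}|\xi|^{-2|\kappa|}\widehat\phi$ is a polynomial multiple of a Schwartz function and $\partial^{-\kappa}\phi\in\cals$. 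Beyond this point the proof is just the $s\ge t$ bookkeeping and direct appeals to Lemma~\ref{lemma:star}.
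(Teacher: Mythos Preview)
Your proposal is correct and follows the same route as the paper's proof: differentiate (or anti-differentiate) under the integral in \eqref{eq:nu}, observe that the modified kernel $(t/s)^{|\gamma|}H$ or $(s/t)^{|\kappa|}H$ still satisfies the hypothesis of Lemma~\ref{lemma:star} with an appropriately shifted exponent, and invoke that lemma. Your additional bookkeeping about the norm comparison $\|\varphi\|_{\star,k,\eta}\le\|\varphi\|_{\star,k+\eta,1}$ is an explicit justification that the paper leaves implicit.

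One point worth highlighting: you have been more careful than the paper on the claim $\partial^{-\kappa}\phi\in\cals$. The paper simply asserts this for $\phi\in\cals_k$, but as you correctly note, the Fourier multiplier $\xi^\kappa|\xi|^{-2|\kappa|}$ introduces a conic singularity at the origin, so for a generic $\phi\in\cals_k$ the anti-derivative $\partial^{-\kappa}\phi$ is only guaranteed finite-order decay, not rapid decay. Your resolution---that in every application of the proposition the function $\phi$ is a mother wavelet (or one of the $\psi^2,\psi^4\in\cals_D$ from Lemma~\ref{lemma:calderon}), for which $\partial^{-\kappa}\phi\in\cals_D$ by construction---is exactly right and is how the proposition is actually used in the proofs of Theorems~\ref{thm:sym} and~\ref{thm:sob-min}. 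So the last clause of the proposition should be read with this implicit strengthening of the hypothesis on $\phi$.
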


\begin{proof}
Applying $t^{|\gamma|}\partial^\gamma$ to the formula (\ref{eq:nu}) for $|\gamma| \le k$, one can see that the symbol
	\[ H(u,v,w,s,t)\left(\frac{t}{s}\right)^{|\gamma|} \]
satisfies the condition of Lemma \ref{lemma:star} with $\delta$ replaced by $k+\delta$ which proves the first statement. Similarly, if $\phi \in \cals_k$, then $\partial^{-\kappa} \phi \in \cals$ for $|\kappa| \le k$ and the symbol obtained by applying $t^{-|\kappa|}\partial^{-\kappa}$ to (\ref{eq:nu}) satisfies
	\[ H(u,v,w,s,t) \left(\frac{s}{t}\right)^{|\kappa|} \lesssim \dfrac{t^{k-|\kappa|+\delta}}{\max\{s,|u-w|,|v-w|\}^{2d+k-|\kappa|+\delta}} \lesssim \dfrac{t^{\delta}}{\max\{s,|u-w|,|v-w|\}^{2d+\delta}}. \]
\end{proof}

The next lemma is similar, and will be used to convert a portion of the paraproduct into a wavelet form.

\begin{lemma}\label{lemma:psi-theta}
Let $k \in \N$, $0<\eta<\delta \le 1$, and $G: \R^{3d} \times (0,\infty)^2 \to \mathbb C$ satisfy
	\[ |G(u,v,w,s,t)| \lesssim \dfrac{s^{k+\delta}}{t^{2d+k+\delta}}. \]
Then, for $\|\psi\|_{\star,d+\delta,\delta},\|\phi\|_{\star,d+\delta,\delta} \le 1$, there exists $C>0$ such that
	\[ \theta_{w,t}(x,y) := \int_0^{3t} \int_{|u-w|,|v-w| \le 9t} G(u,v,w,s,t) \psi_{u,s}(x) \phi_{v,s}(y) \dfrac{ \d u \, \d v \, \d s}{s} \in C\Psi^{k,\delta;1,1}_{w,t}.\]
Moreover, if $\psi$ (or $\phi$) has vanishing moments up to $k$, then 
	\[ \theta_{w,t} \in C\Psi^{k,\delta;0,1}_{w,t} (\mbox{or } C\Psi^{k,\delta;1,0}_{w,t}) \quad  \mbox{and}\quad t^{-|\kappa|}\partial^{-\kappa}_x \theta_{w,t} \in C\Psi^{0,\delta;0,1}_{w,t} (\mbox{or }t^{-|\kappa|}\partial^{-\kappa}_y \theta_{w,t} \in C\Psi^{0,\delta;1,0}_{w,t}) \]
for every $|\kappa| \le k$.
\end{lemma}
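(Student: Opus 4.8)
The plan is to follow the template of Lemma~\ref{lemma:star} and Proposition~\ref{prop:psi-nu}, the one genuinely new feature being that the inner $s$-integration runs only over $(0,3t)$, which -- in contrast to Lemma~\ref{lemma:star} -- removes the logarithmic divergence and hence lets the H\"older exponent stay equal to $\delta$. The first step is to reduce to $k=0$. Applying $t^{|\gamma|}\partial^{\gamma}$ in $(x,y)$ to the defining integral, with $\gamma=(\gamma_1,\gamma_2)\in\N^d\times\N^d$ and $|\gamma|\le k$, the derivatives fall on $\psi_{u,s}$ and $\phi_{v,s}$, producing $(\partial^{\gamma_1}\psi)_{u,s}$, $(\partial^{\gamma_2}\phi)_{v,s}$ and a scalar factor $(t/s)^{|\gamma|}$ (if $\psi,\phi$ are not a priori $\mathcal C^k$ one transfers these derivatives onto $G$ by integrating by parts in $u,v$, the boundary terms on $|u-w|=9t$, $|v-w|=9t$ being harmless). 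Either way $t^{|\gamma|}\partial^{\gamma}\theta_{w,t}$ again has the form in the statement, with $G$ replaced by $G\cdot(t/s)^{|\gamma|}$, and since $k\ge|\gamma|$ while $s<3t$ on the domain,
	\[ \Bigl|G(u,v,w,s,t)\Bigl(\tfrac ts\Bigr)^{|\gamma|}\Bigr|\lesssim\frac{s^{k-|\gamma|+\delta}}{t^{2d+k-|\gamma|+\delta}}\lesssim\frac{s^{\delta}}{t^{2d+\delta}}. \]
So it suffices to show: for $|G|\lesssim s^{\delta}/t^{2d+\delta}$ and $\|\psi\|_{\star,d+\delta,\delta},\|\phi\|_{\star,d+\delta,\delta}\le1$ one has $\theta_{w,t}\in C\Psi^{0,\delta;1,1}_{w,t}$.

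For this base case I would rescale to $(w,t)=(0,1)$ -- equivalently, apply $(\Sy_z^1\Sy_z^2)^{-1}$ and change variables $\alpha=(u-w)/t$, $\beta=s/t$ exactly as in the proof of Lemma~\ref{lemma:star} -- so that the domain becomes $\{0<s<3,\ |u|,|v|\le9\}$ and $|G|\lesssim s^{\delta}$. Bounding the integrand by $s^{\delta}|\psi_{u,s}(x)|\,|\phi_{v,s}(y)|$, the facts needed are $\int_{\R^d}|\psi_{u,s}(x)|\,\d u\lesssim1$ uniformly in $s,x$, and $\int_{|u|\le9}|\psi_{u,s}(x)|\,\d u\lesssim s^{d+\delta}|x|^{-2d-\delta}$ when $|x|\gtrsim1$ (since then $|x-u|\sim|x|\gtrsim s$), together with their analogues for $\phi$; inserting these and performing $\int_0^3\tfrac{\d s}{s}$, which converges absolutely at $s=0$ thanks to the surplus powers of $s$, gives $|(\Sy_z^1\Sy_z^2)^{-1}\theta_z(x,y)|\lesssim\l(x,y)\r^{-2d-\delta}$. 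The H\"older estimate is obtained the same way: writing $\psi_{u,s}(x+h_1)-\psi_{u,s}(x)=s^{-d}\bigl[\psi(\tfrac{x+h_1-u}{s})-\psi(\tfrac{x-u}{s})\bigr]$, one uses the H\"older part of $\|\psi\|_{\star,d+\delta,\delta}$ when $|h_1|\le s$ and the triangle inequality with the size bound when $s<|h_1|\le1$, and similarly for $\phi$; because $s<3$, no logarithmic loss appears and the exponent $\delta$ is preserved.

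For the cancellative refinements: if $\psi$ has vanishing moments up to order $k$ then so does each $\psi_{u,s}$, since $\int x^\alpha\psi_{u,s}(x)\,\d x=\sum_{\beta\le\alpha}\binom{\alpha}{\beta}u^{\alpha-\beta}s^{|\beta|}\int y^\beta\psi(y)\,\d y=0$ for $|\alpha|\le k$; integrating the defining formula in $x$ first (legitimate by the decay just obtained) yields $\int x^\alpha\theta_{w,t}(x,y)\,\d x=0$ for all $|\alpha|\le k$, i.e.\ $\theta_{w,t}\in C\Psi^{k,\delta;0,1}_{w,t}$, and symmetrically when it is $\phi$ that is cancellative. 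For the last assertion, $\partial_x^{-\kappa}[\psi_{u,s}(x)]=s^{|\kappa|}(\partial^{-\kappa}\psi)_{u,s}(x)$, so $t^{-|\kappa|}\partial_x^{-\kappa}\theta_{w,t}$ has the form in the statement with $\psi$ replaced by $\partial^{-\kappa}\psi$ and $G$ by $G\cdot(s/t)^{|\kappa|}$, whose modulus is $\lesssim s^{k+|\kappa|+\delta}/t^{2d+k+|\kappa|+\delta}\lesssim s^{\delta}/t^{2d+\delta}$ on the domain; the base case then gives $\Psi^{0,\delta;1,1}_{w,t}$ membership, while $\partial^{-\kappa}\psi$ still has vanishing mean (as $\widehat{\partial^{-\kappa}\psi}(0)=0$), so $t^{-|\kappa|}\partial_x^{-\kappa}\theta_{w,t}\in C\Psi^{0,\delta;0,1}_{w,t}$.

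I expect the only real labor to be the bookkeeping in the base case -- propagating the spatial decay of $\psi$ and $\phi$ through the $u$-, $v$- and $s$-integrations uniformly in $(w,t)$ -- and the mildly delicate point, as compared with Lemma~\ref{lemma:star}, of keeping the H\"older exponent at $\delta$ rather than losing to some $\eta<\delta$; both go through precisely because truncating $s$ at $3t$ eliminates the divergence that otherwise forces that loss.
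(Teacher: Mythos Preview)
Your approach is essentially the paper's: after the change of variables $\alpha=(u-w)/t$, $\alpha'=(v-w)/t$, $\beta=s/t$, handle $|x|>18$ via the pointwise bound $|\psi_{\alpha,\beta}(x)|\lesssim\beta^{d+\delta}|x|^{-(2d+\delta)}$ (using $|x-\alpha|\ge|x|/2$ for $|\alpha|\le9$) and handle $|x|,|y|\le18$ via $\int|\psi_{\alpha,\beta}|\,d\alpha\le\|\psi\|_{L^1}$, with the antiderivative statements obtained by modifying $G$. The paper treats all $\partial^{\pm\kappa}\theta$ in a single pass (with the unified power $\beta^{k\mp|\kappa|+2\delta}$) rather than first reducing to $k=0$, but this is purely organizational.

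One small overstatement: under the bare hypothesis $\|\psi\|_{\star,d+\delta,\delta}\le1$, your split of the H\"older estimate at $\beta=|h|$ in the bounded region yields $\int_{|h|}^{3}\beta^{\delta}(|h|/\beta)^{\delta}\,\tfrac{d\beta}{\beta}=|h|^{\delta}\log(3/|h|)$, so only H\"older-$\eta$ for $\eta<\delta$ follows, not H\"older-$\delta$. The paper is equally terse here (``the same method yields the H\"older estimate''); since the statement already carries a spare parameter $\eta<\delta$ and in every application $\psi,\phi$ are Schwartz (hence Lipschitz, which does remove the log when $\delta<1$), this is harmless.
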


\begin{proof}
To check the size estimate of $\theta=\theta_{w,t}$, perform the change of variable as in the previous lemma. Assume $|x|>|y|$ and $|x|>18$. In this case,
	\[ |x-\alpha| \ge |x|-|\alpha| \ge |x|-9 \ge \frac 12 |x| \]
so that
	\[ |\psi'(x)| \le \beta^{-d}(1+\frac{|x-\alpha|}{\beta})^{-2d-\delta} \le \beta^{d+\delta}(\beta+|x|)^{-(2d+\delta)}. \]
Thus,
	\[
	\begin{split}
	 |(\Sy_{w,t}^1\Sy_{w,t}^2)^{-1}\partial^{\pm\kappa}\theta(x,y)| \le t^{\mp|\kappa|}\int\displaylimits_{\substack{\alpha\in B(0,9) \\ \alpha' \in B(0,9) \\ 0 < \beta \le 3}} \dfrac{\beta^{k\mp|\kappa|+2\delta}}{(\beta+|x|)^{2d+\delta}} \dfrac{\d \alpha \, \d\alpha' \,\d\beta}{\beta}
	\lesssim \frac{1}{t^{\pm|\kappa|}|x|^{2d+\delta}}.
	\end{split}\]
Symmetry yields the case when $|y|>|x|$. When both $|x|$ and $|y|$ are smaller than 18, we can check the original formula to see
	\[ \theta(x,y) \lesssim \int_0^{3t} s^{k-|\kappa|+\delta}t^{-(k+\delta)} \frac{\d s}{s} \|\psi\|_{L^1} \|\phi\|_{L^1}  \lesssim t^{\mp|\kappa|}. \]
The same method yields the H\"older estimate.
\end{proof}

\subsection{High-Low Cancellation of Wavelets}\label{subsec:calderon}
We return to the Calder\'on formula (\ref{eq:calderon}) from the introduction. In general, it is difficult to analyze operators acting on many different scales at once. It will be helpful in the future to place two functions on the same scale and vary the third. To do so, we use the fact that in the superposition of many wavelets, the smallest scale (highest frequency) dominates. We state this precisely in the following lemma.
\begin{lemma}\label{lemma:calderon}
Let $\phi$ be a radial mother wavelet and $m \ge 0$. There exist functions $\psi^j$, $j=1,2,3,4$, satisfying
	\begin{itemize}
	\item[(i)] $\supp \psi^j \subset B(0,1)$;
	\item[(ii)] $\psi^1,\psi^3 \in C^m$;
	\item[(iii)] $\psi^2,\psi^4 \in \cals_D$;
	\item[(iv)] For any $s>0$ and $f \in L^2(\R^d)$,	\[ \int_{r \ge s} \int_{u \in \R^d} \lip f, \phi_{u,r} \rip \phi_{u,r} \dfrac{ du \ dr}{r} = \int_{\R^d} \lip f, \psi_{u,s}^1 \rip \psi_{u,s}^2 + \lip f,\psi_{u,s}^3 \rip \psi_{u,s}^4 \, du. \]
	\end{itemize}
\end{lemma}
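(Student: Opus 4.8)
The plan is to prove the identity (iv) — from which the building blocks $\psi^j$ and their properties (i)--(iii) are read off — by passing to the Fourier side, where both members of (iv) are Fourier multipliers and the identity becomes a pointwise factorization of one explicit radial symbol. First reduce to $s=1$: both sides transform the same way under $f\mapsto f(s\,\cdot)$, so it is enough to construct the $\psi^j$ at scale $1$ and verify (iv) for $s=1$, the general $s$ following by dilation. Since $\phi$ is real and radial, $\int_{\R^d}\langle f,\phi_{u,r}\rangle\phi_{u,r}\,\d u=f*\phi_r*\phi_r$, so the left side of (iv) at $s=1$ is convolution with $\int_1^\infty\phi_r*\phi_r\,\tfrac{\d r}{r}$, i.e. the Fourier multiplier $\Theta(\xi):=c\int_1^\infty|\widehat\phi(r\xi)|^2\,\tfrac{\d r}{r}$. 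I record the features of $\Theta$ that drive everything: it is radial; it is strictly positive (it would vanish only if $\widehat\phi$ vanished on an entire ray through the origin, impossible for a nonzero compactly supported $\phi$); $\Theta(0)=c_0$, the constant that makes the full Calder\'on integral the identity, and in fact $c_0$ is the maximum of $\Theta$ (reducing to a monotone radial profile); and $\Theta\in C^\infty(\R^d)$ with all derivatives rapidly decaying at infinity, because $\widehat\phi\in\cals$ and, writing $\widehat\phi(\xi)=G(|\xi|^2)$ with $G$ smooth, $G$ vanishes to high order at $0$ (as $\phi\in\cals_{2D}$), which moreover forces $\Theta(\xi)-c_0$ to vanish to high order at the origin.

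The construction now splits $\Theta$ into a non-cancellative ``low-frequency'' bump and a cancellative remainder. Because $\Theta$ attains its maximum $c_0$ at $0$ and is very flat there, one can choose a smooth, compactly supported, radial $\omega$ with $\omega(0)=c_0$, $0\le\omega\le\Theta$, and $\omega<\Theta$ off the origin; then $\eta:=\Theta-\omega\ge0$ is smooth, rapidly decaying, vanishes only at $0$, and at $0$ has the form $\eta(\xi)=(|\xi|^2)^{N}\cdot(\text{positive smooth})$ with $N$ large (of order $D$). Hence $\sqrt\omega,\sqrt\eta\in C^\infty$, and we set $\psi^1=\psi^2:=(\sqrt\omega)^{\vee}$, $\psi^3=\psi^4:=(\sqrt\eta)^{\vee}$. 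Then $\psi^1,\psi^2\in\cals$ (in particular $C^m$) with $\widehat{\psi^1}(0)=\sqrt{c_0}\ne0$, a non-cancellative ``paraproduct'' pair; and $\psi^3,\psi^4\in\cals$ with $\widehat{\psi^3}=\widehat{\psi^4}=\sqrt\eta$ vanishing to order $>D$ at $0$, so $\psi^3,\psi^4\in\cals_D$, a cancellative ``wavelet'' pair. The multiplier of the right side of (iv) is $|\widehat{\psi^1}|^2+|\widehat{\psi^3}|^2=\omega+\eta=\Theta$, which is exactly (iv). (To sidestep the sign bookkeeping one may instead factor $\omega$ and $\eta$ directly as $\overline{\widehat{\psi^1}}\widehat{\psi^2}$ and $\overline{\widehat{\psi^3}}\widehat{\psi^4}$ with appropriate complex-valued choices.)

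The step I expect to cost the most is the support normalization: the Fourier factorization above produces $\psi^j$ that are Schwartz but not compactly supported, and this is essentially unavoidable by the method, since the left side of (iv) superposes wavelets of \emph{all} scales $r\ge s$ and hence has a kernel genuinely spread out over scale $s$ and larger. Obtaining $\supp\psi^j\subset B(0,1)$ therefore requires an additional localization argument — truncating each $\psi^j$ to $B(0,1)$ and absorbing the resulting, rapidly small, error — or else reading (i) as a localization/decay estimate on the $\psi^j$; this reconciliation of the exact identity with the strict support condition, rather than the factorization of $\Theta$, is the real content. All the remaining ingredients — the reduction to $s=1$, the convolution computation, the elementary properties of $\Theta$, and the smooth square roots — are routine.
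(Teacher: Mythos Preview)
Your Fourier-side factorization is a natural first attempt, but it has a genuine gap that cannot be patched, and it also misreads the cancellation structure of the lemma.

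\textbf{The compact support is not an afterthought; it is the whole point.} You say the kernel of the left side of (iv) ``has a kernel genuinely spread out over scale $s$ and larger'', so that (i) is ``essentially unavoidable'' to fail by this method. This intuition is exactly backwards, and it is the step the paper actually proves. Setting $s=1$ and writing $g=\phi*\phi$, the kernel is
\[
\Phi(w)=\int_1^\infty g\!\left(\frac{w}{r}\right)\frac{\d r}{r^{d+1}}
= |w|^{-d}\int_0^{|w|} p(\tau)\,\tau^{d-1}\,\d\tau,
\]
where $p(|x|)=g(x)$ is radial. For $|w|\ge 1$ the inner integral is $c_d^{-1}\int_{\R^d}\phi*\phi=\bigl(\int\phi\bigr)^2=0$, so $\Phi$ is supported in $B(0,1)$. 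The superposition over all scales $r\ge 1$ telescopes; radiality plus mean zero of $\phi$ force the tail to cancel. Your truncation-plus-error suggestion destroys the exact identity (iv), and relaxing (i) to a decay estimate would break the downstream kernel estimates (Lemma~\ref{lemma:U}), which place the $\psi^j$ inside balls of the small scale.

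\textbf{The cancellation pattern is wrong.} The lemma demands $\psi^2,\psi^4\in\cals_D$ and $\psi^1,\psi^3\in C^m$; that is, within \emph{each} pair the output function carries the vanishing moments. Your choice $\psi^1=\psi^2=(\sqrt\omega)^\vee$ makes $\psi^2$ noncancellative (you compute $\widehat{\psi^2}(0)=\sqrt{c_0}\ne 0$), so (iii) fails outright. Splitting $\Theta$ into one noncancellative block and one cancellative block cannot match the required pattern.

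\textbf{What the paper does instead.} Having shown $\Phi\in C^\infty_0(B(0,1))$, the paper decomposes the delta distribution rather than the multiplier: with $D=\partial_{x_1}\cdots\partial_{x_d}$ and $H$ the $C^m$ fundamental solution of $D^{m+2}$ (supported in the positive orthant), choose $g\in C^\infty$ with $g=H$ off $B(0,1/2)$; then $F:=H-g\in C^m_0(B(0,1/2))$, $G:=D^{m+2}g\in C^\infty_0(B(0,1/2))$, and $\delta=D^{m+2}F+G$. Convolving with $\Phi$ gives $\Phi=F*D^{m+2}\Phi+G*\Phi$, from which $\psi^1=F,\ \psi^2=D^{m+2}\Phi,\ \psi^3=G,\ \psi^4=\Phi$ are read off, all supported in $B(0,1)$. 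The vanishing moments of $\psi^2$ come from the high-order differentiation.
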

\begin{proof}
Define $\Phi(w) = \int_1^\infty \phi * \phi (\frac{w}{r}) \frac{dr}{r^{d+1}}$. $\Phi$ retains the vanishing moments properties of $\phi$. Since $\phi$ is radial, so is $\phi * \phi$ and thus $\Phi$. Changing the variables and denoting by $p$ the radial function $p(|x|) = \phi * \phi(x)$ (supported in $[0,1)$), we can rewrite
	\[ \Phi(w) = |w|^{-d}\int_0^{|w|} p(\tau) \tau^{d-1} \, d\tau. \] 
Thus, if $w \ge 1$, $\Phi(w) = |w|^{-d} \int_0^1 p(\tau) \tau^{d-1} \, d\tau = |w|^{-d} \int_{\R^d} \phi * \phi = 0$. In particular, $\Phi$ is supported in $B(0,1)$. Next, a few changes of variables yield
	\[  \int_s^\infty \int_{\R^d} \lip f, \phi_{u,r} \rip \phi_{u,r}(x)  \frac{\d u\, \d r}{r}= \lip f, \Phi_{x,s} \rip \]
for any $f \in L^2(\R^d)$ and $x \in \R^d$. We will be done if we can decompose $\Phi = \psi^1 * \psi^2 + \psi^3 * \psi^4$. In that case, changing the variables again,
	\[ \ip{f}{\Phi_{x,s}} = \int_{\R^d}\ip{f}{\psi^1_{u,s}}\psi^2_{u,s}(x) + \ip{f}{\psi^3_{u,s}}\psi^4_{u,s}(x) \, \d u. \]

Decomposing $\Phi$ is difficult if we want the functions to remain Schwartz and compactly supported \cite{yulmukhametov1999solution}.
Obviously $\Phi = \delta_0 * \Phi$. We decompose the delta distribution as follows. Set 
	\[ H(x) = \left\{\begin{array}{cl}\frac{1}{(m+1)!^d} x_1^{m+1}x_2^{m+1}\cdots x_d^{m+1} & x_i \ge 0; \\ 0 & \mbox{else.} \end{array} \right.\]
Setting $D = \frac{\partial}{\partial{x_1}}\frac{\partial}{\partial{x_2}}\cdots\frac{\partial}{\partial{x_d}}$, integrating by parts yields $H * D^{m+2}\Phi = \Phi$. $H \in C^m$ but is not compactly supported. This can be fixed by taking $g \in C^\infty$ such that $g=H$ for $|x| \ge 1/2$. Then, $G:=D^{m+2}g \in C^\infty_0(B(0,1))$. Distributionally, $\delta = D^{m+2}(H-g) + G :=D^{m+2}F + G$.
Therefore,
	\[ \Phi = F * D^{m+2}\Phi + G * \Phi =:\psi^1 * \psi^2 + \psi^3 *\psi^4. \]
\end{proof}

This allows us to obtain the single-scale variant of the bilinear Calder\'on formula.
\begin{lemma}\label{lemma:product}
\begin{align*} f \otimes g = \int_0^\infty \int_{\R^{2d}}  & \lip f, \psi_{u,s}^1 \rip \l g,\phi_{v,s}\r \psi_{u,s}^2 \otimes \phi_{v,s} + \lip f,\psi_{u,s}^3 \rip \l g, \phi_{v,s} \r \psi_{u,s}^4 \otimes \phi_{v,s} \\
		&+ \lip f, \phi_{u,s} \rip \l g,\psi_{v,s}^1 \r \phi_{u,s} \otimes \psi_{v,s}^2+ \lip f,\phi_{u,s} \rip \l g, \psi_{v,s}^3 \r \phi_{u,s} \otimes \psi_{v,s}^4 \, \dfrac{\d u \d v \d s}{s}.
\end{align*}

\end{lemma}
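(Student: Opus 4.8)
The plan is to apply the single-scale reproducing formula of Lemma \ref{lemma:calderon} in the tensorial setting, exploiting the principle that in a superposition of wavelets at many scales the finest scale wins. Start from the ordinary Calderón formula \eqref{eq:calderon} applied to each factor: $f = \int_{Z^d} \lip f,\phi_z\rip \phi_z \,\d\mu(z)$ and likewise for $g$. Tensoring, $f\otimes g = \int_{Z^d}\int_{Z^d} \lip f,\phi_{u,r}\rip\lip g,\phi_{v,r'}\rip \phi_{u,r}\otimes\phi_{v,r'}\,\frac{\d u\,\d r}{r}\frac{\d v\,\d r'}{r'}$. Now split the double $r,r'$ integral according to whether $r\le r'$ or $r'\le r$ (the diagonal $r=r'$ having measure zero), obtaining two symmetric contributions. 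In the region $r'\le r$ one freezes the smaller scale $s:=r'$ on the $g$-factor and integrates the $f$-factor over all larger scales $r\ge s$; in the region $r\le r'$ one does the reverse.

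In the first region, after Fubini the inner integral in $r\ge s$ is exactly $\int_{r\ge s}\int_{u}\lip f,\phi_{u,r}\rip\phi_{u,r}\,\frac{\d u\,\d r}{r}$, which Lemma \ref{lemma:calderon}(iv) rewrites as $\int_{\R^d}\lip f,\psi^1_{u,s}\rip\psi^2_{u,s} + \lip f,\psi^3_{u,s}\rip\psi^4_{u,s}\,\d u$. Tensoring against $\lip g,\phi_{v,s}\rip\phi_{v,s}$ and integrating in $v$ and in $s=r'$ over $(0,\infty)$ produces precisely the first two terms of the claimed identity. The region $r\le r'$ is handled identically with the roles of $f$ and $g$ exchanged, yielding the last two terms. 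Summing the two regions gives the stated formula.

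The main technical point to be careful about is the interchange of the order of integration (Fubini--Tonelli) and the justification of the single-scale rewriting \eqref{eq:calderon}-to-Lemma \ref{lemma:calderon}(iv) inside the double integral: one should work with $f,g\in\cals$ (or $L^2$), interpret the identities in an appropriate weak sense (tested against a Schwartz function on $\R^{2d}$), and use the rapid decay of $\lip f,\phi_{u,r}\rip$ in both the spatial and scale variables—together with the compact support and smoothness of $\phi$ and of the $\psi^j$ from Lemma \ref{lemma:calderon}—to guarantee absolute convergence and legitimize all the manipulations. Since Lemma \ref{lemma:calderon}(iv) already holds as an exact $L^2$ identity for each fixed $s$, and the $v,s$-integration of $\lip g,\phi_{v,s}\rip\phi_{v,s}$ reconstructs $g$ by \eqref{eq:calderon}, no genuinely new estimate is needed; the whole argument is a bookkeeping of scales, and this splitting of $\{(r,r')\}$ into $\{r\le r'\}\cup\{r'\le r\}$ is the only real idea.
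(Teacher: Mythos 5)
Your argument is exactly the paper's proof: apply the Calderón reproducing formula \eqref{eq:calderon} to each of $f$ and $g$, split the resulting double scale integral into $r\ge s$ and $s>r$, and in each region collapse the larger-scale integral via Lemma \ref{lemma:calderon}(iv). The additional remarks about Fubini and absolute convergence are a sensible (if not strictly required) supplement to the paper's terse treatment, but the approach is identical.
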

\begin{proof}
Use the Calder\'on formula (\ref{eq:calderon}) on $f$ and $g$ to obtain
	\[ f \otimes g = \int_{Z^d} \int_{Z^d} \lip f,\phi_{u,r} \rip \lip g,\phi_{v,s} \rip \phi_{u,r} \otimes \phi_{v,s} \dfrac{du \, dr \, dv \, ds}{rs}\] 
Split the integral into $r \ge s$ and $s > r$. On the first one, apply Lemma \ref{lemma:calderon} with $f$ and on the second, apply it to $g$. 
\end{proof}

\section{Representation Formula}\label{sec:rep}

\subsection{Singular Integrals}\label{subsec:si} Let $\vec 1_{d}=(1,\ldots, 1)\in \R^d$. 
Given $k \in \N$, a function $K\in L^1_{\mathrm{loc}}( \R^{3d}\setminus \R\cic{1}_{3d})$ is a $(\vl,\delta)$ SI (singular integral) kernel if there exist $C,\delta>0$ such that for all $0 \le |\kappa| \le k$, 
	\[ |\partial_{x_i}^\kappa K(x_0,x_1,x_2)| \le \dfrac{C}{(\sum_{j \ne i}|x_i-x_j|)^{2d+|\kappa|}} \]
	\[ |\partial_{x_i}^\kappa \Delta^i_h K(x_0,x_1,x_2)| \le \dfrac{C|h|^\delta}{(\sum_{j \ne i}|x_i-x_j|)^{2d+|\kappa|+\delta}}.\]
$\Delta_h^i$ denotes the difference operator in the $i$th position.
We say $\Lambda$ is a $(k,\delta)$ trilinear SI form if
	\[ \int K(x_0,x_1,x_2) f(x_1) g(x_2) h(x_0)\, \d x = \Lambda(f,g,h) \]
for all $f,g,h \in\cals$ with $\supp f \cap \supp g \cap \supp h = \varnothing$ and for a $(k,\delta)$ SI kernel $K$. Notice that a $(k,\delta)$ SI form is a $(k',\delta')$ form for any $k' \le k$ and $\delta' \le \delta$.

\subsection{Calder\'on-Zygmund Forms}\label{subsec:cz}
Our representation formula will be built using the following intrinsic singular integral forms.

\subsubsection{Wavelet Forms}
\begin{definition}
A trilinear form $U$ is called a $(k,\delta)$-smooth wavelet form if for each $z \in Z^d$, there exists $\nu_z \in \Psi_{z}^{k,\delta;1,0}$ such that
	\[ U(\pi(f,g,h)) = \int_{Z^d} \lip f \otimes g , \nu_z \rip \lip h,\phi_z \rip \, \d \mu(z) \]
for some permutation $\pi \in S^3$ and a mother wavelet $\phi$.
\end{definition}

\subsubsection{Paraproducts}\label{subsec:para}
Let $\{\theta^\gamma_{z} \in \Psi^{D,\delta;1}_{z}\}_{z \in Z^d}$ be a $\gamma$-family, which means
	\[ \int x^\beta \theta_z^\gamma(x) \, dx = t^{|\beta|}\delta_{\beta,\gamma} \]
for each $|\beta|\le|\gamma| \le D$. These can be constructed by taking a single function $\varu^\gamma$, smooth and compactly supported such that
	\[ \int_{\R^d} \varu^\gamma(x) x^\beta \, \d x = \delta_{\beta,\gamma}, \quad 0 \le \beta \le \gamma,\] and then acting on $\varu^\gamma$ with the affine group $Z^d$, yielding $\varu_z^\gamma$. Such functions $\varu^\gamma$ do indeed exist, see \cite{alpert1993class,rahm2021weighted}. 

Given a function $b \in \BMO$ and multi-index $\gamma \in \N^{2d}$, define the $\gamma$-order paraproduct form 
	\begin{equation}\label{eq:para-form} \Pi_{b,\gamma}(f,g,h)=\int_{Z^d}\lip b,(\partial^{-\gamma_1-\gamma_2}\phi)_{z} \rip \lip f , \varu_z^{\gamma_1} \rip \l g, \varu_{z}^{\gamma_2} \rip \lip h, \phi_{z} \rip \, \d\mu(z) \end{equation}
where $\phi$ is a mother wavelet and $\varu_{z}^{\gamma_\ell}$ are compactly supported $\gamma_\ell$ families. $\Pi_{b,\gamma}$ is a $(M,\delta)$ SI form for any $M>0$ up to the smoothness of $\varu^\gamma$ and $\phi$ and any $0 < \delta \le 1$. This can be verified using the same reasoning in the discussion after Definition \ref{def:psi} of the wavelet classes, only it is simpler since the wavelets are compactly supported. 

We will use the partial ordering on multi-indices $\vec j, \vec k \in \N^{m}$: $\vec j = (j_1,j_2,\ldots,j_m) \le \vec k = (k_1,k_2,\ldots,k_m)$ if $j_\ell \le k_\ell$ and $\vec j < \vec k$ if $\vec j \le \vec k$ but $\vec j \ne \vec k$.
In this way, for all $\kappa \le \gamma$ and $\phi$ with vanishing moments up to $|\gamma|$,
	\begin{align*} \Pi_{b,\gamma}(x^{\kappa_1},y^{\kappa_2},\phi) &= \delta_{\kappa,\gamma}\lip b,\partial^{-\gamma_1-\gamma_2}\phi \rip, \\
	\Pi^{i*}_{b,\gamma}(x^{\kappa_1},y^{\kappa_2},\phi) &=0
	\end{align*}
where $\Pi_b^{*1}(f,g,h) = \Pi_b(h,g,f)$ and $\Pi_b^{*2}(f,g,h) = \Pi_b(f,h,g)$.
In general, this is difficult to compute for $\kappa > \gamma$ (see Section \ref{sec:comments} below). Now we will iteratively define the $\gamma \in \N^{2d}$ order paraproducts of a form $\Lambda$.

Recall $\cals_j = \{ \psi \in \cals : \int x^\gamma \psi(x) \, dx =0 \mbox{ for } |\gamma| \le j \}$.
When $\gamma=0$, we say a $(0,\delta)$ SI form $\Lambda$ has $0$th order paraproducts if there exists BMO functions $b_0^i$, $i=0,1,2$ such that for all $\psi \in \cals_0$,
	\begin{align*} 
			\Lambda(1,1,\psi) &= \lip b_0^0,\psi \rip, \\
			\Lambda(\psi,1,1) &= \lip b_0^1,\psi \rip, \\
			\Lambda(1,\psi,1) &= \lip b_0^2,\psi \rip.
	\end{align*}
This is the standard bilinear $T(1,1)$ condition \cite{christ1987polynomial,li2019bilinear}. 
Now, for $(k_1,k_2)>0$, we define the $(k_1,k_2)$-th order paraproducts inductively. Suppose $\Lambda$ has paraproducts $b_\gamma^{i}$ for all $(|\gamma_1|,|\gamma_2|) < (k_1,k_2)$. Then, we say $\Lambda$ has $(k_1,k_2)$-th order paraproducts if for each $|\gamma_1|=k_1$, $|\gamma_2|=k_2$, there exist $b_{\gamma}^i \in \BMO$ such that for all $\psi \in \cals_{k_1+k_2}$,
	\[ \Lambda_{k_1,k_2} := \Lambda - \sum_{i=0}^2 \sum_{(|\kappa_1|,|\kappa_2|) < (k_1,k_2)} \Pi^{i*}_{{b_\kappa^i},\kappa} \]
satisfies
	\begin{align*} \Lambda_{k_1,k_2}(x^{\gamma_1},y^{\gamma_2},\psi) &= \lip b_\gamma^0,\partial^{-\gamma_1-\gamma_2}\psi \rip; \\
	\Lambda_{k_1,k_2}(\psi,y^{\gamma_2},x^{\gamma_1}) &= \lip b_\gamma^1,\partial^{-\gamma_1-\gamma_2}\psi \rip; \\
	\Lambda_{k_1,k_2}(x^{\gamma_1},\psi,y^{\gamma_2}) &= \lip b_\gamma^2,\partial^{-\gamma_1-\gamma_2}\psi \rip. 
	\end{align*}
Under this definition, one can verify by induction that $\Lambda_{k_1,k_2}$ has vanishing paraproducts of all orders $<(k_1,k_2)$. The action of $(k,\delta)$ SI forms on polynomials of degree $(k_1,k_2)$ with $k_1+k_2 \le k$ can be defined as elements of the dual space of $\cals_{k_1+k_2}$, see \cite{frazier88,benyi2003bilinear}.

\begin{definition}\label{def:cz}
Let $k_1+k_2 \le k$. A $(k,\delta)$ SI form $\Lambda$ is called a $(k_1,k_2,\delta)$ Calder\'on-Zygmund (CZ) form if it has paraproducts up to order $(k_1,k_2)$ and satisfies the Weak Boundedness Property (WBP), which means
	\begin{equation}\label{eq:wbp-1} t^{2d}|\Lambda(\phi_z,\psi_z,\varu_z)| \le C \end{equation}
for all $\phi_z,\psi_z,\varu_z \in \Psi_{z}^{0,\delta;1,1}$ supported in $B(w,t)$ and $(w,t)=z \in Z^d$. We also say $T$ is a $(k_1,k_2,\delta)$ Calder\'on-Zygmund operator (CZO) if $\Lambda(f,g,h) = \l T(f,g),h \r$ is a $(k_1,k_2,\delta)$ CZ form.
\end{definition}

\subsection{Smooth Representation Theorem}
\begin{theorem}\label{thm:sym}
Let $\Lambda$ be a $(k_1,k_2,\delta)$ CZ form and $\eta < \delta$. There exists $(j,\eta)$-smooth wavelet forms $U^i_j$, for $j=\min\{k_1,k_2\},\ldots,k_1+k_2$, $i=1,\ldots,6$ and paraproduct forms $\Pi_{b_\gamma^i,\gamma}^{i*}$
$i=0,1,2$ such that
	\[ \Lambda(f,g,h) = \sum_{i=1}^6 \sum_{j=\min\{k_1,k_2\}}^{k_1+k_2} U_j^i(f,g,h) + \sum_{i=0}^2 \sum_{|\gamma_\ell|\le |k_\ell|}\Pi^{i*}_{b_\gamma^{i*},\gamma}(f,g,h). \]
\end{theorem}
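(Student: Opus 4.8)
The plan is to start from the single-scale bilinear Calder\'on formula (Lemma~\ref{lemma:product}), which writes $f \otimes g$ as a superposition, indexed by $z=(w,s)\in Z^d$, of four tensor products in which one factor is a compactly supported mother-wavelet-type bump $\phi_{u,s}$ or $\psi^j_{u,s}$ and the other carries vanishing moments up to $D$. Pairing this with the Calder\'on reproducing formula applied to the output variable, i.e.\ expanding $h = \int_{Z^d}\l h,\phi_z\r\phi_z\,\d\mu(z)$, we obtain a quadruple-scale expansion of $\Lambda(f,g,h)$ whose building blocks are quantities of the form $\Lambda(a^1_{u,s},a^2_{v,s},\phi_{x,r})$ with $a^1,a^2$ among the $\phi,\psi^j$. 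The first step is therefore to collapse the $(u,v)$ integrations into a single point $w$: writing $a^i_{u,s}$ in terms of its center relative to $w$ and using that the bumps are localized, we repackage the inner form as $\l f\otimes g,\tilde\nu_{w,s}\r\l h,\phi_{w,r}\r$ up to error terms, where $\tilde\nu_{w,s}$ is a (possibly non-compactly supported) function in a tensor wavelet class.

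The second and central step is to split according to whether the output scale $r$ is larger or smaller than the input scale $s$, and to use the high-low cancellation Lemma~\ref{lemma:calderon} to put the two smallest scales on a common footing. This is exactly the mechanism by which the ``complexity'' (in Hyt\"onen's sense) is removed: summing over all $r\ge s$ with $h$ against a mother wavelet produces, via $\Phi = \psi^1*\psi^2+\psi^3*\psi^4$, a single-scale object, and the resulting convolution structure is precisely of the type handled by the wavelet averaging Lemma~\ref{lemma:star} and Proposition~\ref{prop:psi-nu} (or, for the portion where the output scale is the smaller one, by Lemma~\ref{lemma:psi-theta}). Concretely, one must verify that the kernel $H$ (respectively $G$) assembled from the off-diagonal estimates on $\Lambda$ and the almost-orthogonality Lemma~\ref{lemma:almost} satisfies the decay hypothesis $|H(u,v,w,s,t)|\lesssim t^{j+\delta}/\max\{s,|u-w|,|v-w|\}^{2d+j+\delta}$ at the appropriate smoothness level $j$. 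Here the gain of $j$ derivatives comes from the integration-by-parts identity $\l f,\phi\r = \sum_{|\gamma|=j}\l\partial^\gamma f,\partial^{-\gamma}\phi\r$ combined with the $(k_1,k_2,\delta)$ kernel estimates, which is why $j$ ranges from $\min\{k_1,k_2\}$ up to $k_1+k_2$: one transfers up to $k_1$ derivatives off the first input and up to $k_2$ off the second. After this step each fully-cancellative contribution is, by Proposition~\ref{prop:psi-nu} and Lemma~\ref{lemma:psi-theta}, a $(j,\eta)$-smooth wavelet form $U^i_j$, and bookkeeping the finitely many permutations ($i=1,\dots,6$: two from which input is resolved at the finer scale times three from which variable is the ``output'') accounts for the index ranges in the statement.

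The third step handles the non-cancellative remainder. At each stage of the scale decomposition, the terms where all the vanishing-moment wavelets have been ``used up'' leave behind pieces in which $f$ and $g$ are tested against bump functions with prescribed moments $\varu^{\gamma_1}_z,\varu^{\gamma_2}_z$; using the iterative $T(1)$ testing conditions defining the $(k_1,k_2,\delta)$ CZ form, these are identified with the higher-order paraproduct forms $\Pi^{i*}_{b^i_\gamma,\gamma}$ for $|\gamma_\ell|\le k_\ell$, exactly as the inductive definition of paraproducts in Subsection~\ref{subsec:para} was set up to allow. The subtlety here is that after subtracting the paraproducts the modified form $\Lambda_{k_1,k_2}$ has vanishing paraproducts of all orders $<(k_1,k_2)$ and satisfies WBP, so the error terms produced along the way (the diagonal $r\sim s$ contributions controlled by WBP, and the center-collapsing errors) can be reabsorbed into cancellative wavelet forms rather than generating new paraproducts. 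The main obstacle, and the place requiring the most care, is precisely the convergence and recombination in this third step: one must track that the telescoping of the inductive paraproduct subtraction leaves no residual non-decaying term, that the BMO functions $b^i_\gamma$ appearing are exactly those furnished by the CZ hypothesis (with $D^{\sigma-|\gamma|}b^0_\gamma\in\BMO$ only needed later in Section~\ref{sec:sob}, not here), and that every error term genuinely falls under the decay hypotheses of Lemma~\ref{lemma:star}/Lemma~\ref{lemma:psi-theta} so that nothing escapes the wavelet-form/paraproduct dichotomy.
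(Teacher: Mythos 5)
Your proposal correctly identifies all the tools the proof relies on (the Calder\'on formula, the high--low collapse Lemma~\ref{lemma:calderon}, the wavelet averaging Lemma~\ref{lemma:star} and Lemma~\ref{lemma:psi-theta}, the $\Upsilon$-type off-diagonal estimates, and the inductive $T(1)$ testing conditions), and it correctly anticipates the final ``work with $\Lambda_{k_1,k_2}$ then add back the subtracted paraproducts'' reduction. However, the order in which you apply these tools contains a genuine gap. You propose to \emph{begin} by applying the single-scale bilinear formula of Lemma~\ref{lemma:product} to $f\otimes g$, bringing $f$ and $g$ to a common scale $s$, and then to expand $h$ separately by the Calder\'on formula at a scale $r$, afterwards splitting $r\ge s$ versus $r<s$. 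This is incompatible with the case in which $f$ or $g$ (rather than $h$) sits at the overall finest scale. In that regime the cancellative slot $\l\cdot,\phi_z\r$ of the wavelet form must hold $f$ (resp.\ $g$), and the tensor slot $\l\cdot\otimes\cdot,\nu_z\r$ must hold $h$ together with the \emph{other} input. After Lemma~\ref{lemma:product}, though, that other input is pinned at the fixed scale $s$ (against a non-cancellative $\psi^{1}_{u,s}$ or $\psi^3_{u,s}$), while $h$ is at a \emph{varying} scale $r\ge s$; the two functions you would need to average together to build $\nu_z$ via Lemma~\ref{lemma:star} are therefore not expressed at a common running scale, and Lemma~\ref{lemma:star} (or Lemma~\ref{lemma:psi-theta}) simply does not apply to that pair. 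Moreover the non-cancellative factor always ends up attached to one of $f,g$, so $f$ (say) cannot occupy the mother-wavelet slot $\phi_z$ as the definition of a wavelet form requires.

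The paper avoids this by reversing your first two steps: it first applies the Calder\'on formula to \emph{all three} of $f,g,h$, then splits $(Z^d)^3$ into $I,II,III$ according to which of the three scales is smallest, and only then applies Lemma~\ref{lemma:calderon} --- once in each region, to the two \emph{larger} scales --- to produce twelve two-scale terms $\sigma_1,\ldots,\sigma_{12}$. This ordering is essential: after the split, the two functions that get averaged into $\nu_z$ genuinely share a running scale $s\ge t$, so the hypotheses of Lemma~\ref{lemma:star} and Proposition~\ref{prop:psi-nu} are met, and the adjoint structure $\Lambda^{1*},\Lambda^{2*}$ arises automatically in the regions $II,III$. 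Lemma~\ref{lemma:product} does indeed enter, but only at the end, and for a different purpose than you propose: after subtracting the $\Upsilon$-correction, the pieces $\sigma_{i,1}$ coming from $\Lambda(P_{k_1},\tilde P_{k_2},\phi)$ over the full cone $Z(w,t)$ are re-assembled four at a time using Lemma~\ref{lemma:product} to reconstruct $f\otimes g$ (resp.\ $h\otimes g$, $f\otimes h$) and thereby recover exactly the paraproduct forms $\Pi_{b_\gamma^0,\gamma}$, $\Pi^{1*}_{b_\gamma^1,\gamma}$, $\Pi^{2*}_{b_\gamma^2,\gamma}$. You should also make explicit the $\Upsilon$-decomposition of Lemma~\ref{lemma:U}: it is the subtraction of $\Lambda(P_{k_1},\tilde P_{k_2},\phi)$, together with the two half-paraproducts $\Lambda(P_{k_1-1},\phi-\tilde P_{k_2},\phi)$ and $\Lambda(\psi-P_{k_1},\tilde P_{k_2-1},\phi)$ on the region $A(w,t)$, that simultaneously produces the top-order wavelet form $U^i_{k_1+k_2}$ and, via the intermediate estimates \eqref{eq:half-est}, the lower-order wavelet forms $U^i_j$ for $\min\{k_1,k_2\}\le j<k_1+k_2$ --- a mechanism your plan gestures at but does not actually carry out.
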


The region of interest $Z(w,t) := \R^{2d} \times (t,\infty)$ will be partitioned into the following regions (Far, Near, High-Low):
\begin{equation}\label{eq:regions}\begin{aligned} F(w,t) &:= \{\max\{|u-w|,|v-w|\} \ge 3s, \, s \ge t\}, \\
				S(w,t) &:= \{\max\{|u-w|,|v-w|\} \le 3s, \, s \le 3t\}, \\
				\mbox{and} \quad A(w,t) &:= \{\max\{|u-w|,|v-w|\} \le 3s, \, s \ge 3t\}.
\end{aligned} \end{equation}
We will also use the region $I(w,t) := \{ \max\{|u-w|,|v-w|\} \le 3\max\{s,t\}, \, s \le 3t\}$.

\subsubsection{Kernel Estimates}
\begin{lemma}\label{lemma:U}
 Let $\Lambda$ be a $(k_1,k_2,\delta)$ CZ form, $\phi$ be a mother wavelet, and $\psi \in {\mathcal C_0^\infty}(B(0,1))$. For each $0\le|\gamma|\le \max\{k_1,k_2\}$, let $\varu_{w,t}^{\gamma}$ be a $\gamma$-family. Define
	\[ P_{k}(x) = \sum_{0 \le |\gamma| \le k} \lip \psi_{u,s}, \varu^{\gamma}_{w,t} \rip \left(\frac{x-w}{t}\right)^\gamma\]
and $\tilde P_k$ replacing $\psi_{u,s}$ with $\phi_{v,s}$. For $s \ge t$, define
	\begin{equation}\label{def:upsilon} \begin{aligned} \Upsilon(u,v,w,s,t) = \Lambda(\psi_{u,s},&\phi_{v,s},\phi_{w,t}) - 1_{A(w,t)}(u,v,s)\left[ \Lambda(P_{k_1},\tilde P_{k_2},\phi_{w,t})\right. \\
		&\left.+ \Lambda(P_{k_1-1},\phi_{v,s}-\tilde P_{k_2},\phi_{w,t}) -\Lambda(\psi_{u,s}-P_{k_1},\tilde P_{k_2-1},\phi_{w,t}) \right]. \end{aligned}\end{equation}
Then, for any $\eta<\delta$, \[ |\Upsilon(u,v,w,s,t)| \lesssim_\eta \dfrac{t^{k_1+k_2+\eta}}{\max\{s,|u-w|,|v-w|\}^{2d+k_1+k_2+\eta}}. \]

\end{lemma}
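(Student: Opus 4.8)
The plan is to estimate $\Upsilon(u,v,w,s,t)$ by splitting according to the three regions $F(w,t)$, $S(w,t)$, $A(w,t)$ of \eqref{eq:regions}, which partition $Z(w,t)=\R^{2d}\times(t,\infty)$. In the regions $F$ and $S$, the indicator $1_{A(w,t)}$ vanishes, so $\Upsilon = \Lambda(\psi_{u,s},\phi_{v,s},\phi_{w,t})$ and we must simply show this quantity obeys the stated decay. In the region $A$, the subtracted Taylor-type corrections $\Lambda(P_{k_1},\tilde P_{k_2},\phi_{w,t})$, etc., are active and their purpose is precisely to cancel the leading terms in the Taylor expansion of $\psi_{u,s}$ and $\phi_{v,s}$ about $w$ at scale $t$, turning a crude bound into the sharp $t^{k_1+k_2+\eta}$ gain.

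First, in region $F(w,t)$: here $\max\{|u-w|,|v-w|\}\ge 3s\ge 3t$, so the supports of (essentially) $\psi_{u,s}$, $\phi_{v,s}$ and $\phi_{w,t}$ are separated at scale comparable to $\max\{s,|u-w|,|v-w|\}\sim\max\{|u-w|,|v-w|\}$. Thus $\Lambda$ acts through its SI kernel $K$, and one invokes the kernel size and Hölder bounds from Subsection \ref{subsec:si}, pairing $K$ against the (non-compactly supported but rapidly decaying) $\psi_{u,s},\phi_{v,s}$ and against $\phi_{w,t}$. Since $\phi_{w,t}$ has vanishing moments up to order $D\ge k_1+k_2$, one subtracts the Taylor polynomial of $K$ in the $x_0$ variable, gaining $(t/\max\{\cdots\})^{k_1+k_2+\delta}$ from the Hölder estimate on $\partial^{k_1+k_2}K$; this yields the claimed bound with $\delta$ in place of $\eta$, hence a fortiori with $\eta<\delta$. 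The mild technical point is handling the tails of $\psi_{u,s},\phi_{v,s}$ outside $B(u,s),B(v,s)$, which contribute a geometrically summable series as in the proof of Lemma \ref{lemma:star}.

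Second, in region $S(w,t)$: all three functions live essentially at comparable scale $s\sim t$ near $w$, so $\max\{s,|u-w|,|v-w|\}\sim t$ and the target bound is just $t^{k_1+k_2+\eta}\cdot t^{-(2d+k_1+k_2+\eta)}=t^{-2d}$. This is exactly the Weak Boundedness Property \eqref{eq:wbp-1} of the CZ form $\Lambda$, after rescaling $\psi_{u,s},\phi_{v,s},\phi_{w,t}$ to be normalized elements of $\Psi^{0,\delta;1,1}_z$ supported (up to rapidly decaying tails, again split into annuli) near $w$ at scale $\sim t$. The region $A(w,t)$ is the heart of the matter: there $s\ge 3t$ and the supports are nested, $\supp\phi_{w,t}\subset B(w,t)\subset B(w,3s)\supset$ (bulk of) $\supp\psi_{u,s},\phi_{v,s}$, so $\Lambda$ cannot be evaluated through the off-diagonal kernel in the $h$-slot. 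Instead one Taylor-expands $\psi_{u,s}(\cdot)$ and $\phi_{v,s}(\cdot)$ around $w$ at scale $t$: writing $\psi_{u,s}=P_{k_1}+(\psi_{u,s}-P_{k_1})$ and $\phi_{v,s}=\tilde P_{k_2}+(\phi_{v,s}-\tilde P_{k_2})$ and expanding the bilinear $\Lambda(\cdot,\cdot,\phi_{w,t})$, the terms $\Lambda(P_{k_1},\tilde P_{k_2},\cdot)$, $\Lambda(P_{k_1-1},\phi_{v,s}-\tilde P_{k_2},\cdot)$, $\Lambda(\psi_{u,s}-P_{k_1},\tilde P_{k_2-1},\cdot)$ subtracted in \eqref{def:upsilon} remove precisely the combinations that are not already of the remainder type $\Lambda(\text{remainder},\text{remainder},\cdot)$. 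What survives is a single term $\Lambda(\psi_{u,s}-P_{k_1},\phi_{v,s}-\tilde P_{k_2},\phi_{w,t})$ (plus lower-order remainder-remainder crossings), where each factor is a genuine $\Psi$-type bump at scale $s$ minus its order-$k_j$ Taylor polynomial at scale $t$, hence carries a factor $(t/s)^{k_j+\eta}$ by a Taylor-remainder estimate; the residual pairing of these remainders against $\phi_{w,t}$ via WBP (or a kernel estimate, depending on sub-position within $A$) is bounded, and multiplying the gains $(t/s)^{k_1+k_2}$ against the natural $s^{-2d}$ normalization and integrating/comparing to $\max\{s,|u-w|,|v-w|\}\sim s$ in region $A$ gives the asserted decay. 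The main obstacle is organizing this bilinear Taylor bookkeeping so that, in region $A$, the matched subtractions in \eqref{def:upsilon} leave only genuine double-remainder terms and the borderline derivatives $P_{k_j}$ vs. $P_{k_j-1}$ are tracked correctly — and ensuring the loss from $\delta$ to $\eta<\delta$ absorbs the logarithmic factor arising (as in Lemma \ref{lemma:star}) from the $t$-integration against the mildly decaying, non-compactly-supported wavelets.
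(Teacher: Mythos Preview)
Your three-region split and the roles you assign to each region are exactly the paper's approach, and the treatment of $F(w,t)$ (integrate by parts $k_1+k_2$ times onto $\phi_{w,t}$, then use the H\"older kernel estimate) and of $S(w,t)$ (WBP gives $t^{-2d}$) is right. Two small corrections there: both $\psi$ and $\phi$ are compactly supported in $B(0,1)$ by hypothesis, so there are no tails to sum in annuli; and the $\eta<\delta$ loss does not come from $F$ or $S$ but from region $A$ (a $\log(s/t)$ appears when integrating the remainder against the kernel over $t\le|x-w|\le s$).

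The gap is in region $A$. First, the algebra: after expanding $\Lambda(\psi_{u,s},\phi_{v,s},\phi_{w,t})-\Lambda(P_{k_1},\tilde P_{k_2},\phi_{w,t})$ and matching against the subtracted terms in \eqref{def:upsilon}, what survives is the double remainder $\Lambda(\Xi_u,\Xi_v,\phi_{w,t})$ \emph{together with} the top-degree half terms, namely sums over $|\gamma|=k_2$ of $\langle\phi_{v,s},\varu^\gamma_{w,t}\rangle\,\Lambda(\Xi_u,p^\gamma_{w,t},\phi_{w,t})$ and the symmetric piece with $|\gamma|=k_1$. These are remainder--polynomial, not remainder--remainder, and each requires its own estimate (this is \eqref{eq:half-est} in the paper); the coefficient $\langle\phi_{v,s},\varu^\gamma_{w,t}\rangle$ contributes the missing factor $t^{k_2}/s^{d+k_2}$ via Lemma \ref{lemma:almost}.

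Second, your mechanism for $\Lambda(\Xi_u,\Xi_v,\phi_{w,t})$ is too compressed. You cannot apply WBP directly: the remainders $\Xi_u,\Xi_v$ live at scale $s\gg t$, not at scale $t$. The paper introduces a smooth cutoff $\alpha$ around $B(w,t)$ and splits $T(\Xi_u,\Xi_v)=T(\alpha\Xi_u,\alpha\Xi_v)+T(\alpha\Xi_u,(1-\alpha)\Xi_v)+T((1-\alpha)\Xi_u,\Xi_v)$. The first piece is handled by WBP at scale $t$ using the pointwise bound $|\Xi(x)|\lesssim s^{-d}(|x-w|/s)^{k}\min\{1,\max(|x-w|,t)/s\}$ on $B(w,t)$; the other two pieces are handled by integrating by parts $k_1+k_2$ times and using the H\"older kernel estimate, and it is the integral of $|\Xi(x)|/|x-w|^{d+k+\delta}$ over $t\le|x-w|\le s$ that produces the $\log(s/t)$ and forces the downgrade from $\delta$ to $\eta$. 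The same cutoff-and-split is needed for the half-paraproduct pieces $\Lambda(\Xi,p^\gamma,\phi_{w,t})$.
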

\begin{proof}
\textit{Region 1:} When $(u,v,s) \in F(w,t)$, $\max\{|v-w|,|u-w|\} \ge 3s$.
Integrating by parts $k_1+k_2$ times, rewrite $\Lambda(\psi,\phi,\phi) = \sum_{|\gamma|=k_1+k_2}\lip \partial_{x_0}^\gamma K, \psi\otimes \phi \otimes \partial^{-\gamma}(\phi) \rip$. Use the H\"older estimate of the kernel and the fact that $\partial^{-\gamma} (\phi_{w,t}) = t^{|\gamma|} (\partial^{-\gamma} \phi)_{w,t}$ to get
	\[ t^{k_1+k_2}\left| \int_{x_1 \in B(u,s)} \int_{x_2 \in B(v,s)} \dfrac{t^\delta \psi_{u,s}(x_1)\phi_{v,s}(x_2) \d x_1 \d x_2}{(|x_1-w|+|v-w|)^{2d+k_1+k_2+\delta}} \right|\le \dfrac{t^{k_1+k_2+\delta}}{\max\{|u-w|,|v-w|\}^{2d+k_1+k_2+\delta}}. \]
It was also important here that $\partial^{-\gamma}\phi$ was still mean zero.

\textit{Region 2:} In the region $S(w,t)$, appealing to the WBP gives the estimate of $t^{-2d}$.

\textit{Region 3:} In the final region, $A(w,t)$, $|u-w|,|v-w| \le 3s$ and $s \ge 3t$.
Here we rewrite
	\begin{equation}\Lambda( \psi_{u,s},\phi_{v,s},\phi_{w,t}) - \Lambda(P_{k_1},\tilde P_{k_2},\phi_{w,t}) \label{eq:ups}\end{equation}
	\[ = \Lambda( \psi_{u,s}-P_{k_1},\phi_{v,s}-\tilde P_{k_2},\phi_{w,t}) + \Lambda(P_{k_1},\phi_{v,s}-\tilde P_{k_2},\phi_{w,t}) + \Lambda(\psi_{u,s}-P_{k_1},\tilde P_{k_2},\phi_{w,t}). \] 
$P$ and $\tilde P$ satisfy the estimate
	\[|\psi_{u,s}(x)-P_k(x)|,|\phi_{v,s}(x)-\tilde P_k(x)| \le \dfrac{1}{s^d} \left(\frac{|x-w|}{s}\right)^k \min\left\{1,\frac{\max\{|x-w|,t\}}{s}\right\}.\]
See \cite[Lemma 3.1]{diplinio20} where this is derived using Taylor polynomials in conjunction with Lemma \ref{lemma:almost}.

Let $\alpha$ be a cutoff function around $B(w,t)$ and set $\Xi_u=\psi_{u,s}-P$ and $\Xi_v=\phi_{v,s}-\tilde P$. We decompose the first term in (\ref{eq:ups}), $\Lambda(\Xi_u,\Xi_v,\phi_{w,t})$, by
\[ T(\Xi_u, \Xi_v) = T(\alpha \Xi_u, \alpha \Xi_v) + T(\alpha \Xi_u, (1-\alpha)\Xi_v) + T(1-\alpha)\Xi_u, \Xi_v) .\]
On the first term, use WBP around $(w,t)$ to get $t^d t^{-d} \|\psi-P\|_{L^\infty (B(w,t))} \|\phi-\tilde P\|_{L^\infty(B(w,t)} \sim \frac{t^{k_1+k_2+2}}{s^{2d+k_1+k_2+2}} \le \frac{t^{k_1+k_2+\delta}}{s^{2d+k_1+k_2+\delta}}$ for any $\delta \le 2$. 
For the third (and second) terms, use the H\"older kernel estimate as above after integrating by parts $k_1+k_2$ times to get 
    \begin{equation}\label{eq:XiXi} \int\limits_{\substack{x_1 \in B(w,t)^c\\ x_2 \in \R^d}}\dfrac{t^{k_1+k_2+\delta} |\Xi_u(x_1)\Xi_{v}(x_2)|}{(|x_1-w|+|x_2-w|)^{2d+k+\delta}} \d x_1 \, \d x_2 \lesssim \dfrac{t^{k_1+k_2+\delta}}{s^{d}} \int\limits_{x_1 \in B(w,t)^c}\dfrac{|\Xi(x_1)| \, \d x_1}{|x_1-w|^{d+k_1+k_2+\delta}},\end{equation}
	where we have used 
		\[ \begin{aligned} \int_{\R^d} \dfrac{|\Xi_{v}(x_2)|}{(|x_1-w|+|x_2-w|)^{2d+k_1+k_2+\delta}} \d x_2 &\le s^{-d-k_2}\int_0^\infty (\tau+|x_1-w|)^{-(2d+k_1+k_2+\delta)}\tau^{d+k_2-1} \, \d\tau \\
		&\le s^{-d-k_2} c_d |x_1-w|^{-(d+k_1+\delta)}. \end{aligned}\]
    Break up the remaining integral in (\ref{eq:XiXi}) into $t \le |x_1-w| \le s$ and $s < |x_1-w|$. In the first case, we have the estimate $|\Xi_u(x_1)|\le 1^{1-\delta}|x_1-w|^{k_1+\delta} s^{-d-k_1-\delta}$. Thus,
    \[ \int_{t \le |x_1-w| \le s} \dfrac{|\Xi_u(x_1)| \, \d x_1}{|x_1-w|^{d+k_1+\delta}} \le \dfrac{1}{s^{d+k_1+\delta}}\int_t^s \tau^{-1} \, d\tau \le \dfrac{1}{s^{d+k+\delta}} \log\left(\frac st\right). \]
    On the other hand, $\int_{s < |x_1-w|} \Xi_u(x_1)|x_1-w|^{-(d+k_1+\delta)} \, \d x_1 \le s^{-d-k_1} \int_s^\infty \tau^{-(1+\delta)} \, d\tau \le s^{-(d+k_1+\delta)}$. 

The remaining terms in (\ref{eq:ups}) are $\Lambda(\psi_{u,s}-P_{k_1},\tilde P_{k_2},\phi)$ and $\Lambda(P_{k_1},\phi_{v,s}-\tilde P_{k_2},\phi)$. Comparing (\ref{eq:ups}) with (\ref{def:upsilon}), we see that we only need to estimate two terms of the form
	\begin{equation}\label{eq:half} \Lambda(\Xi,\tilde P,\phi_{w,t}) = \sum_{|\gamma|=k_2}\lip \psi_{u,s}, \varu_{w,t}^\gamma \rip \Lambda(\Xi,p^{\gamma}_{w,t},\phi_{w,t}) \end{equation}
where $p^{\gamma}_{w,t}(x) = (\frac{x-w}{t})^\gamma$. We will need estimates for each summand in the future for all $\gamma$, so we will estimate the general form $\Lambda(\Xi,p^\gamma,\phi)$. As before, decompose
\[ \Lambda(\Xi, p,\phi_{w,t}) = \Lambda(\alpha \Xi, \alpha p,\phi_{w,t}) + \Lambda(\alpha \Xi,(1-\alpha)p,\phi_{w,t}) + \Lambda((1-\alpha)\Xi, p,\phi_{w,t})\]
where $\alpha$ is a smooth cutoff around $B(w,t)$. 

For the first term, use WBP to get $t^d t^{-d} \|p\|_{L^\infty(B(w,t))}\|\psi-P\|_{L^\infty (B(w,t))}\lesssim \frac{t^{k_1+1}}{s^{d+k_1+1}}$. 
For the second term, follow the outline above integrating by parts $k_1+|\gamma|$ times and applying the H\"older kernel estimate to get
	\begin{align*} |\Lambda(\Xi,p^\gamma,\phi)| &\lesssim t^{k_1+|\gamma|+1}\int_{x_1 \in B(w,t)} \int_{x_2 \in B(w,t)^c} \dfrac{|\Xi(x_1) p^\gamma(x_2)|}{(|x_1-w|+|x_2-w|)^{2d+k_1+|\gamma|+\delta}} \, \d x_1 \, \d x_2\\
			& \le \dfrac{t^{k_1+|\gamma|+k_1+1+\delta}}{s^{d+k_1+\delta}} \int_{x_2 \in B(w,t)^c} \dfrac{|p^\gamma(x_2)| \, \d x_2}{|x_2-w|^{d+k_1+|\gamma|+\delta}}   \le \dfrac{t^{k_1+|\gamma|+k_1+2\delta}}{s^{d+k_1+\delta}} \dfrac{1}{t^{k_1+\delta}} = \dfrac{t^{k_1+1}}{s^{d+k_1+1}}. 
	\end{align*}
For the second inequality, we used the estimate $|\Xi(y)| \le \frac{t^{k_1+1}}{s^{d+k_1}}$ when $|y-w| \le t$.
The third term in the decomposition is similar, but more closely follows the line of proof used on $\Lambda(\Xi,\Xi,\phi)$ above. Thus, the summands in (\ref{eq:half}) have the estimates
		\begin{equation}\label{eq:half-est} \left| \lip \psi_{u,s}, \varu^{\gamma}_{w,t} \rip \Lambda(p^\gamma_{w,t},\phi_{v,s}-P_{k_2},\phi_{w,t}) \right| \le \dfrac{t^{|\gamma|+k_1+\eta}}{s^{2d+|\gamma|+k_1+\eta}} \end{equation}
since the coefficients $|\lip \psi_{u,s} , \varu_{w,t}^\gamma \rip| \lesssim \frac{t^{|\gamma|}}{s^{d+|\gamma|}}$ (see Lemma \ref{lemma:almost} using the fact that $\varu_{w,t}^\gamma$ has vanishing moments up to $|\gamma|$ and H\"older exponent $\delta=1$). Taking the case $|\gamma|=k_2$, we see that the remaining terms in $\Upsilon$, (\ref{eq:half}), satisfy the estimates claimed in the Lemma.
\end{proof}

In the proof of the representation theorem, we will still have to deal with the error terms subtracted off of $\Upsilon$ in the region $A(w,t)$. $\Lambda(P,P,\phi)$ is controlled by the paraproducts assumption, but the so-called half-paraproducts $\Lambda(P_{k_1-1},\phi-P,\phi)$, satisfy the worse estimates (\ref{eq:half-est}) with $|\gamma| \le k_2-1$.

\subsubsection{Proof of Theorem \ref{thm:sym}}
First, we represent $\Lambda$ at the $(k_1,k_2)$th level under the assumption it has vanishing paraproducts of all orders less than $(k_1,k_2)$. We decompose $\Lambda(f,g,h)$ by applying the results of Subsection \ref{subsec:calderon} to $f$, $g$, and $h$. By Calder\'on's formula, we obtain
\begin{align*}
	\Lambda(f,g,h) 
		&=\int_{Z^d \times Z^d \times Z^d} \lip f,\phi_{u,r} \rip \lip g,\phi_{v,s} \rip \lip h,\phi_{w,t} \rip \Lambda(\phi_{u,r},\phi_{v,s},\phi_{w,t}) \d \mu(w,t) \, \d \mu(v,s) \, \d \mu(u,r) \\
		&=\int_{u,v,w} \left(\int_{r,s\ge t>0} + \int_{s,t \ge r >0} + \int_{t, r \ge s >0} \right).
\end{align*}
Split the first integral as $\int_{r \ge s \ge t} + \int_{s \ge r \ge t}$ and use Lemma \ref{lemma:calderon} above to get
	\begin{align*} \int\limits_{\substack{(u,v,w) \in \R^{3d} \\ r,s \ge t > 0}} &= \int\limits_{\substack{(v,w) \in \R^{2d} \\ s \ge t>0}} \Lambda\Bigg(\int\limits_{\substack{u \in \R^d \\ r \ge s}}\lip f,\phi_{u,r} \rip \phi_{u,r}  \d \mu(u,r), \phi_{v,s},\phi_{w,t}\Bigg) \lip g,\phi_{v,s} \rip \lip h,\phi_{w,t} \rip \d \mu(v,s) \, \d \mu(w,t)\\
	&\quad+ \int\limits_{\substack{(u,w) \in \R^{2d} \\ r \ge t}} \Lambda\Bigg(\phi_{u,r},\int\limits_{\substack{ v \in \R^d \\ s \ge r}}\lip g,\phi_{v,s} \rip \phi_{v,s} \, \d\mu(v,s), \phi_{w,t} \Bigg) \lip f,\phi_{u,r} \rip \lip h,\phi_{w,t} \rip \, \d\mu(u,r) \, \d\mu(w,t)\\
	&= \int\limits_{\substack{(u,v,w) \in \R^{3d} \\ s \ge t >0}} \lip f, \psi_{u,s}^1 \rip \lip g, \phi_{v,s}\rip \lip h,\phi_{w,t} \rip \Lambda(\psi_{u,s}^2,\phi_{v,s},\phi_{w,t}) \dfrac{ \d s \, \d t \, \d u \, \d v \, \d w}{st} \\
	&\quad +\int\limits_{\substack{(u,v,w) \in \R^{3d} \\ s \ge t >0}} \lip f, \psi_{u,s}^3 \rip \lip g, \phi_{v,s}\rip \lip h,\phi_{w,t} \rip \Lambda(\psi_{u,s}^4,\phi_{v,s},\phi_{w,t}) \dfrac{ \d s \, \d t \, \d u \, \d v \, \d w}{st}\\
	&\quad+ \int\limits_{\substack{(u,v,w) \in \R^{3d} \\ r \ge t >0}} \lip f, \phi_{u,r} \rip \lip g, \psi_{v,r}^1\rip \lip h,\phi_{w,t} \rip \Lambda(\phi_{u,r},\psi_{v,r}^2,\phi_{w,t}) \dfrac{ \d r \, \d t \, \d u \, \d v \, \d w}{rt}\\
	&\quad+ \int\limits_{\substack{(u,v,w) \in \R^{3d} \\ r \ge t >0}} \lip f, \phi_{u,r} \rip \lip g, \psi_{v,r}^3\rip \lip h,\phi_{w,t} \rip \Lambda(\phi_{u,r},\psi_{v,r}^4,\phi_{w,t}) \dfrac{ \d r \, \d t \, \d u \, \d v \, \d w}{rt}. \\
	&=: I
	\end{align*}
Recall that only $\psi^1$ and $\psi^3$ are non-cancellative.
Split each of the three terms in the same way, obtaining a decomposition $\Lambda(f,g,h) = I + II + III$ where $I$ is given above while $II$ and $III$ are defined by
\begin{align*}
	II &= \int\limits_{\substack{(u,v,w) \in \R^{3d} \\ t \ge r >0}} \lip f,\phi_{u,r} \rip \lip g, \psi_{v,t}^1 \rip \lip h, \phi_{w,t} \rip \Lambda^{1*}(\phi_{w,t},\psi_{v,t}^2,\phi_{u,r}) \dfrac{ \d t \, \d r \, \d u \, \d v \, \d w}{tr}\\
	&\quad + \int\limits_{\substack{(u,v,w) \in \R^{3d} \\ t \ge r >0}} \lip f,\phi_{u,r} \rip \lip g, \psi_{v,t}^3 \rip \lip h, \phi_{w,t} \rip \Lambda^{1*}(\phi_{w,t},\psi_{v,t}^4,\phi_{u,r}) \dfrac{ \d t \, \d r \, \d u \, \d v \, \d w}{tr}\\
	&\quad+ \int\limits_{\substack{(u,v,w) \in \R^{3d} \\ t \ge s >0}} \lip f,\phi_{u,t} \rip \lip g,\phi_{v,s} \rip \lip h,\psi_{w,t}^1 \rip \Lambda^{1*}(\psi_{w,t}^2,\phi_{v,s},\phi_{u,t}) \dfrac{ \d t \, \d s \, \d u \, \d v \, \d w}{ts}\\
	&\quad + \int\limits_{\substack{(u,v,w) \in \R^{3d} \\ t \ge s >0}}\lip f,\phi_{u,t} \rip \lip g,\phi_{v,s} \rip \lip h,\psi_{w,t}^3 \rip \Lambda^{1*}(\psi_{w,t}^4,\phi_{v,s},\phi_{u,t}) \dfrac{ \d t \, \d s \, \d u \, \d v \, \d w}{ts}\end{align*}
	\begin{align*}
	III&= \int\limits_{\substack{(u,v,w) \in \R^{3d} \\ s \ge r >0}} \lip f,\psi_{u,r}^1 \rip \lip g,\phi_{v,s} \rip \lip h,\phi_{w,s} \rip \Lambda^{2*}(\psi_{w,s}^2,\phi_{v,s},\phi_{u,r}) \dfrac{ \d s \, \d r \, \d u \, \d v \, \d w}{sr}\\
	&\quad + \int\limits_{\substack{(u,v,w) \in \R^{3d} \\ s \ge r >0}}\lip f,\psi_{u,r}^3 \rip \lip g,\phi_{v,s} \rip \lip h,\phi_{w,s} \rip \Lambda^{2*}(\psi_{w,s}^4,\phi_{v,s},\phi_{u,r}) \dfrac{ \d s \, \d r \, \d u \, \d v \, \d w}{sr}\\
	&+ \int\limits_{\substack{(u,v,w) \in \R^{3d} \\ r \ge s >0}} \lip f,\phi_{u,r} \rip \lip g,\phi_{v,s} \rip \lip h,\psi_{w,r}^1 \rip \Lambda^{2*}(\phi_{u,r},\psi_{w,r}^2,\phi_{v,s}) \dfrac{ \d r \, \d s \, \d u \, \d v \, \d w}{rs}\\
	&+ \int\limits_{\substack{(u,v,w) \in \R^{3d} \\ r \ge s >0}} \lip f,\phi_{u,r} \rip \lip g,\phi_{v,s} \rip \lip h,\psi_{w,r}^3 \rip \Lambda^{2*}(\phi_{u,r},\psi_{w,r}^4,\phi_{v,s}) \dfrac{ \d r \, \d s \, \d u \, \d v \, \d w}{rs}.
\end{align*}
Due to the apparent symmetry, it is enough to handle only the first summand in $I$, let us call it $\sigma_1$. The remaining eleven terms are handled almost exactly the same.
Recalling $\Upsilon$ from (\ref{def:upsilon}) and the different regions of $Z^d$ from (\ref{eq:regions}), we can represent 
	\begin{equation}\label{eq:sigma0-4}
	\begin{aligned} \sigma_1 &= \int\limits_{(w,t) \in Z^d} \int\limits_{(u,v,s) \in Z(w,t)} \Upsilon(u,v,w,s,t) \lip f,\psi_{u,s}^1\rip \lip g,\phi_{v,s} \rip \lip h,\phi_{w,t} \rip \dfrac{ \d s \, \d u \, \d v \, \d t \, \d w}{st}\\
	& \quad + \left(\int\limits_{Z^d} \int\limits_{(u,v,s) \in Z(w,t)} - \int\limits_{Z^d} \int\limits_{(u,v,s) \in  Z(w,t) \backslash A(w,t)} \right)\Lambda(P_{k_1},\tilde P_{k_2},\phi_{w,t}) \\
		&\hspace{30ex} \times \lip f , \psi_{u,s}^1 \rip \lip g,\phi_{v,s} \rip \lip h,\phi_{w,t} \rip \dfrac{ \d s \, \d u \, \d v \, \d t \, \d w}{st}\\
	&\quad + \int\limits_{Z^d} \int\limits_{(u,v,s) \in A(w,t)} \left[ \Lambda(P_{k_1-1},\phi_{v,s}-\tilde P_{k_2}, \phi_{w,t}) + \Lambda(\psi^1-P_{k_1-1},\tilde P_{k_2}, \phi_{w,t})\right] \\
		& \hspace{30ex} \times \lip f,\psi_{u,s}^1\rip \lip g,\phi_{v,s} \rip \lip h,\phi_{w,t} \rip  \dfrac{ \d s \, \d u \, \d v \, \d t \, \d w}{st} \\
	&=\sigma_{1,0} + \sigma_{1,1}+\sigma_{1,2} + \sigma_{1,3}+\sigma_{1,4}.\end{aligned}
	\end{equation}
Therefore, using Proposition \ref{prop:psi-nu} and the kernel estimates on $\Upsilon$ (Lemma \ref{lemma:U}), we obtain $\nu_z \in \Psi_{z}^{k_1+k_2,\delta;1,0}$ such that
	\[ \sigma_{1,0} = \int_{Z^d} \lip f \otimes g, \nu_z \rip \lip h,\phi_z \rip \, \d \mu(z)=:U_{k_1+k_2}^1(f,g,h). \] 
Recalling the estimate (\ref{eq:half-est}), and again applying Proposition \ref{prop:psi-nu}, we obtain $\nu^j_z \in C\Psi_{z}^{j,\delta;1,0}$ such that
	\begin{align*} \sigma_{1,3} + \sigma_{1,4} &= \sum_{j=\min\{k_1,k_2\}}^{k_1+k_2-1} \int_{Z^d} \lip f \otimes g ,\nu^j_z \rip \lip h,\phi_z \rip \, \d\mu(z).
	\end{align*}
In this way we have constructed the remaining wavelet forms $U_{j}^1$ for $j=\min\{k_1,k_2\},\ldots,k_1+k_2-1$. The vanishing paraproducts assumption allows us to compute
	\[ \Lambda(P,\tilde P,\phi_{w,t}) = \sum_{|\gamma_i|=k_i}\lip \psi_{u,s}, \varu^{\gamma_1}_{w,t}\rip \lip \phi_{v,s}, \varu_{w,t}^{\gamma_2} \rip \lip b_{\gamma}^0 , (\partial^{-\gamma_1-\gamma_2} \phi)_{w,t} \rip. \]
By considering the supports of $\phi$, $\psi$, and $\varu^{\gamma_i}$, 
	\[ G(u,v,w,s,t) := \lip \phi_{v,s}, \varu^{\gamma_1}_{w,t} \rip \lip \psi_{u,s},\varu^{\gamma_2}_{w,t} \rip \]
vanishes whenever $|u-w| \ge 3\max\{s,t\}$ or $|v-w| \ge 3\max\{s,t\}$. Moreover, $|G| \lesssim \frac{s^M}{t^{2d+M}}$ for any $M$ up to which $\psi$ and $\phi$ have vanishing moments and remain smooth (see Lemma \ref{lemma:almost}). Therefore, for $\sigma_{1,2}$, the integration region $Z(w,t) \backslash A(w,t)$ can be replaced by $I(w,t)$ and by Lemma \ref{lemma:psi-theta}, there exists $\theta_z \in \Psi_{z}^{M,\delta;1,0}$ such that
	\[ \sigma_{1,2} = \sum_{|\gamma_\ell|=k_\ell} \int_{Z^d} \lip f \otimes g,\theta_z \rip \lip h,\phi_z \rip \lip b_\gamma,(\partial^{-\gamma_1-\gamma_2}\phi)_z \rip \, \d\mu(z).\]
Furthermore, since $b_\gamma^0 \in \BMO$ and $\partial^{-\gamma_1-\gamma_2} \phi$ has mean zero, each summand in $\sigma_{1,2}$ is a wavelet form with the wavelet
	\[ \nu_{z} = \lip b_\gamma,(\partial^{-\gamma_1-\gamma_2}\phi)_z \rip \theta_{z}. \]
We break up each $\sigma_i=\sigma_{i,0}+\sigma_{i,1}+\sigma_{i,2}+\sigma_{i,3}+\sigma_{1,4}$, $i=1,2,\ldots,12$. Each $\sigma_{i,0}$, $\sigma_{i,2}$ , $\sigma_{i,3}$, and $\sigma_{i,4}$ is handled similarly giving the wavelet forms $U_{j}^i$. 

We now deal with the remaining terms, $\sigma_{i,1}$. We reassemble $\sigma_{i,1}+\sigma_{i+1,1}+\sigma_{i+2,1}+\sigma_{i+3,1}$ for $i=1,5,9$ and use Lemma \ref{lemma:product} (the expanded tensor Calder\'on reproducing formula) to obtain
	\[ \sum_{\ell=1}^4\sigma_{\ell,1} = \sum_{|\gamma_i|=k_i}\int_{Z^d} \lip b_{\gamma},(\partial^{-\gamma_1-\gamma_2}\phi)_{z} \rip \lip f ,\varu_{z}^{\gamma_1} \rip \lip g , \varu_{z}^{\gamma_2} \rip \lip h, \phi_{z} \rip \, \d\mu(z)=: \sum_{|\gamma_\ell|=k_\ell} \Pi_{b_\gamma,\gamma}(f,g,h). \]
Doing so similarly for $\sum_{\ell=5}^8\sigma_{\ell,1}$ and $\sum_{\ell=9}^{12}\sigma_{\ell,1}$ and using the vanishing paraproducts assumption on $\Lambda^{1*}$ and $\Lambda^{2*}$ yields
	\begin{equation}\label{eq:rep-van} \Lambda(f,g,h) = \sum_{i=1}^6 \sum_{j=\min\{k_1,k_2\}}^{k_1+k_2} U_j^i(f,g,h) + \sum_{|\gamma_\ell|=k_\ell} \sum_{i=0}^2 \Pi^{i*}_{b_\gamma^{i*},\gamma}(f,g,h).\end{equation}
To remove the vanishing paraproducts assumption, we first recall the definition of $(k_1,k_2)$ paraproducts. This means that
	\[ \Lambda_{k_1,k_2} := \Lambda - \sum_{i=0}^2\sum_{ (|\gamma_1|,|\gamma_2|) < (k_1,k_2) } \Pi_{b_\gamma^{i*},\gamma}^{i*} \]
is a $(k_1,k_2,\delta)$ CZ form and has vanishing paraproducts of orders $<(k_1,k_2)$. Thus the Theorem is proved by applying (\ref{eq:rep-van}) to $\Lambda_{k_1,k_2}$.

\section{Sobolev Space Bounds}\label{sec:sob}
The cancellation structure of the forms $U$ and $\Pi$ is important for the  results below. To reflect this, we introduce the following intrinsic sub-trilinear form 
	\begin{equation}\label{eq:mss} \MSS(f,g,h) = \int_{Z^d} \Psi_z^{0,\delta;1,0} (f,g) \Psi^{\cals_0}_z (h) \, \d \mu(z) \end{equation}
where $\Psi_z^{\cals_0} f$ and $\Psi_z^{k,\delta;i,j} (f,g)$ are the intrinsic wavelet coefficients defined by
	\[\Psi_z^{0,\delta;i,j}(f,g) = \sup_{\nu_z \in \Psi_z^{0,\delta;i,j}} |\l f \otimes g,\nu_z \r|, \quad \Psi_z^{\cals_0} (f) = \sup_{\substack{\theta_z \in \Psi_z^{0,\delta;0}, \\ \|\Sy_z^{-1}\theta_z\|_{\star,2d+1,\delta} \le 1} } |\lip f,\theta_z \rip|. \]
It is important to distinguish among the three arguments, since the first one is non-cancellative, the first two have limited decay, and the third has rapid decay. In this first application, Section \ref{subsec:class-sob}, the collection of wavelets used in $\Psi^{\cals_0}$ will actually be compactly supported, but we will need to consider rapidly decaying ones in Section \ref{subsec:frac-sob}.

For the paraproducts, we define the intrinsic paraproduct form for $b \in \BMO$
	\begin{equation}\label{eq:pi} \pi_b(f,g,h) = \int_{Z^d} \Psi_z^{\cals_0} (b) \Psi_z^{0,\delta;1,1}(f,g) \Psi_z^{\cals_0} (h) \, \d\mu(z). \end{equation}
Estimates for $\Lambda$ are achieved using the representation theorem and then by appealing to estimates for $\MSS$ and $\pi_b$. In particular, these forms have sparse $(1,1,1)$ bounds which we will now define. 

\begin{definition}\label{def:sparse}
A collection $\Q$ of cubes $Q \subset \R^d$ is sparse if there is a disjoint collection of sets $\{E_Q : Q \in \Q\}$ such that
	\[ E_Q \subset Q \mbox{ and } |E_Q| > \frac 12 |Q|. \]
Above, $|\cdot|$ is the Lebesgue measure. A sub-trilinear form $\S$ has sparse $(p_1,p_2,p_3)$ bounds if for each triple $f_j \in L^\infty(\R^d)$  with compact support, $j=1,2,3$, there is a sparse collection $\Q = \Q(f_1,f_2,f_3)$ such that
	\begin{equation}
\label{e:sparsegen}
 \S(f,g,h) \le C \sum_{Q \in \Q} |Q| \lip f_1 \rip_{p_1,Q}\lip f_2\rip_{p_2,Q}\lip f_1 \rip_{p_3,Q}, \quad \lip f \rip_{p,Q} \coloneqq |Q|^{-1/p}\|f\cic{1}_Q\|_{p}.\end{equation}
\end{definition}
The fact that $\pi_b$ and $\MSS$ have sparse bounds can be achieved through standard approaches, see for example \cite{conde-alonso17,lacey-a2-17,lerner-a2-13,barron2017weighted}, since they are more or less Calder\'on-Zygmund forms. However, in Section \ref{sec:sparse}, Proposition \ref{prop:sparse} below, a direct proof is given. Such proof also applies to more general forms which do not necessarily satisfy  kernel estimates.
\subsection{Weight classes} \label{ss:wclasses}
Sparse bounds are naturally related to weighted norm  inequalities. Accordingly,  the definition of the multilinear Muckenhoupt $A_{\vec p,\vec r}$ weights, first appearing in \cite{lerner2009new}, is recalled below. We choose to employ the normalization of \cite[pp.\ 101-102]{li2020extrapolation} and stick to trilinear weight vectors, but the extension to higher linearities is a mere matter of changing the notation.

Throughout this discussion, unless otherwise specified, a weight vector $\vec v=(v_1,v_2,v_3)$ refers to a triple of positive measurable functions on $\R^3$ such that
\begin{equation}
\label{e:weight1} 1= \prod_{j=1}^3 v_j(x), \qquad x\in \R^d.
\end{equation}
For $\vec \eps=(\eps_1,\eps_2,\eps_3) \in (0,\infty]^3$, and a  weight vector $\vec v$, define the characteristic
\[
[\vec v]_{\vec \eps} =  \sup_{Q} \prod_{j=1}^d \left\langle \frac{1}{v_j} \right\rangle_{\eps_j,Q}
\]
where $Q$ is allowed to vary among all cubes in $\R^d$. Notice that if $\eps_j=\infty$ the corresponding local norm simply indicates the essential supremum on $Q$. We work with the extended simplex $S$ and with the  set of generalized H\"older tuples $P$
\[
S=\left\{\vec \alpha  \in  \left[{\textstyle -\frac{1}{2}},  1\right]^3: \sum_{j=1}^3 \alpha_j=1\right\}, 
\quad
P=\left\{\vec p=(p_1,p_2,p_3)\in (-\infty,\infty]^3: \textstyle\left(\frac{1}{p_1},\frac{1}{p_2}, \frac{1}{p_3}\right) \in S \right\}.\] 
Say that the tuple $\vec r=(r_1,r_2,r_3)\in [1,\infty)^3$ satisfies $\vec r \prec \vec p$  for $\vec p \in P$, if 
\[
\eps_{j}\coloneqq \frac{p_jr_j}{p_j-r_j} >0, \qquad 
j=1,2,3.
\]
Above, we mean that $\eps_{j}=r_j$ if $p_j=\infty$, in natural agreement with taking limits in the definitions. 
If  $\vec r \prec \vec p$, writing $\vec \eps(\vec p, \vec r)$ for  $\vec \eps$ defined above, the weight vector  $\vec v$ belongs to the class $A_{\vec p ,\vec r}$ if
$
[\vec v]_{A_{\vec p ,\vec r}} \coloneqq
[\vec v]_{\vec \eps(\vec p, \vec r)}  <\infty.
$
This definition, unlike that of \cite{li2020extrapolation}, is completely symmetric with respect to matching permutations of  $\vec v, \vec p, \vec r$.
However our purpose of studying bilinear operators acting on Lebesgue and Sobolev spaces whose integrability exponents $p_1,p_2$ are $\geq 1$, it is convenient to break the symmetry and work in the corresponding  portion of $P$. To wit, define $$
P_\circ=\left\{\vec p \in P: 1<\min\{ p_1,p_2\}< \infty \right\}, \qquad p(\vec p) \coloneqq \frac{p_3}{p_3-1} =\frac{p_1p_2}{p_1+p_2}. 
$$
Notice that $\frac12< p(\vec p) <\infty$ is automatic from the definition of $P_\circ$. On the other hand, also observe that (at most) one of $p_1,p_2$ may be $=\infty$ when $\vec p \in P_\circ$. 
For comparison with  \cite[pp.\ 101-102]{li2020extrapolation}, when $\vec v\in A_{\vec p ,\vec r}$, one may   single out the dual weight
\[
w=w( \vec v)= \prod_{j=1}^2 v_j = \frac{1}{v_3}
\]
corresponding to the weight $w$ associated to the pair $ (v_1,v_2)$ therein. We will not make use of the notation $w( \vec v)$ in our statements to minimize redundancy.
The most important classes for the study of bilinear Calder\'on-Zygmund operators correspond to the choice $r_1=r_2=r_3=1$. In that case, we simply write $[\vec v]_{A_{\vec p}}$ in place of   $[\vec v]_{A_{\vec p ,\vec r}}$.

A consequence of the sparse bounds of Proposition \ref{prop:sparse} below for $\pi_b$ and $\MSS$ is the following weighted Lebesgue space result.
\begin{proposition}\label{prop:weighted}
{Let $\vec p\in P_\circ$, $\vec v   \in A_{\vec p}$, $p=p(\vec p)$, $b \in \BMO$. Then, denoting by $T$ the bilinear operator defined by either $\lip T(f_1,f_2),h\rip = \pi_b(f_1,f_2,h)$ or $\MSS(f_1,f_2,h)$, there holds
	\[ \left\|\frac{T(f_1,f_2)}{ v_3}\right\|_{L^p(\R^d)} \lesssim [\vec v]_{A_{\vec p}}^{\max\{p_1',p_2',p\}} \prod_{j=1}^2\left\|f_j v_j \right\|_{L^{p_j}(\R^d)}. \]}
\end{proposition}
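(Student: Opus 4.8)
The plan is to feed the sparse $(1,1,1)$ bounds of Proposition~\ref{prop:sparse} into the now-classical principle that sparse domination implies sharply quantified weighted Lebesgue estimates. By Proposition~\ref{prop:sparse}, for $f_1,f_2,h\in L^\infty$ with compact support there is a sparse collection $\Q$ (depending on the three functions) with $\MSS(f_1,f_2,h)$, resp.\ $\pi_b(f_1,f_2,h)$, bounded by $C\sum_{Q\in\Q}|Q|\,\lip f_1\rip_{1,Q}\lip f_2\rip_{1,Q}\lip h\rip_{1,Q}$, the constant $C$ depending only on the data of the form. Hence it suffices to prove, \emph{uniformly over all sparse collections} $\Q$, the trilinear estimate
	\[ \sum_{Q\in\Q}|Q|\prod_{j=1}^{3}\lip g_jv_j^{-1}\rip_{1,Q}\ \lesssim\ [\vec v]_{A_{\vec p}}^{\max\{p_1',p_2',p\}}\prod_{j=1}^{3}\|g_j\|_{L^{p_j}(\R^d)} \]
for $\vec p\in P$ with $\tfrac1{p_1}+\tfrac1{p_2}+\tfrac1{p_3}=1$, $p_3=\tfrac{p}{p-1}$, $p=p(\vec p)$, and $\vec v\in A_{\vec p}$; the proposition then follows by taking $g_1=f_1v_1$, $g_2=f_2v_2$, $g_3=h$ and using $v_1v_2=v_3^{-1}$ together with $\prod_j v_j=1$, so that $h/v_3=h\,v_1v_2$ and $\lip g_jv_j^{-1}\rip_{1,Q}$ recovers $\lip f_j\rip_{1,Q}$ for $j=1,2$.

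To pass from this form estimate to the operator bound one dualizes when $p\ge1$: $\|T(f_1,f_2)/v_3\|_{L^p}=\sup\{|\lip T(f_1,f_2)/v_3,h\rip|:\|h\|_{L^{p'}}\le1\}$ with $p'=p_3$, and each pairing is $\le\MSS(f_1,f_2,h/v_3)$ (resp.\ $\pi_b(f_1,f_2,h/v_3)$). In the quasi-Banach range $\tfrac12<p<1$ linear duality is unavailable, so one works directly with the bilinear sparse operator $\sum_{Q\in\Q}\lip f_1\rip_{1,Q}\lip f_2\rip_{1,Q}\ind_Q$ and estimates its $L^p((v_1v_2)^p)$ norm, as in \cite{li2014sharp,lerner2019intuitive,li2020extrapolation,nieraeth2019quantitative}; reducing general $f_j\in L^{p_j}(v_j)$ to the bounded compactly supported case is a routine truncation and Fatou argument.

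The displayed trilinear estimate is itself the black box, proved in \cite{li2020extrapolation,nieraeth2019quantitative}. For completeness, its proof replaces $\Q$ by its principal cubes relative to each $g_j$, yielding a nested sparse family, and then runs a stopping-time decomposition in which H\"older's inequality at exponents $(p_1,p_2,p_3)$ and the $A_{\vec p}$ condition convert each stopping sum into a geometric series; summing the series produces exactly the power $\max\{p_1',p_2',p\}$ of $[\vec v]_{A_{\vec p}}$ in the normalization of Subsection~\ref{ss:wclasses}.

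The substantial content is thus external to this proof --- Proposition~\ref{prop:sparse} on one side and the sharp sparse-to-weighted principle on the other --- and within the argument the only delicate points are handling the quasi-Banach range $p<1$ without recourse to duality and tracking the sharp exponent $\max\{p_1',p_2',p\}$.
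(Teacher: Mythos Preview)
Your proposal is correct and follows essentially the same strategy as the paper: invoke the sparse $(1,1,1)$ bounds of Proposition~\ref{prop:sparse} and then appeal to the literature on sharp weighted estimates for sparse forms. The only organizational difference is that the paper first establishes the single case $\vec q=(3,3,3)$ via \cite[Lemma~6.1]{culiuc2018domination} and then extrapolates using \cite[Theorem~2.1]{li2020extrapolation}, whereas you invoke the general sparse-to-weighted principle for all $\vec p$ directly; both routes are standard and equivalent in content.
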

\begin{proof} Let $\vec q= (3,3,3)$. Fix $f_1,f_2$ and $h$.  Proposition \ref{prop:sparse} yields the existence of a sparse collection $\Q$ such that  
\[
\begin{split}
\left|\left \langle T(f_1,f_2)v_3^{-1},h  \rip \right\rangle \right| \lesssim 
\sum_{Q \in \Q} |Q|   \lip  f_1\rip_{1,Q}\lip f_2\rip_{1,Q}\lip h v_3^{-1} \rip_{1,Q}
\lesssim  [\vec v]_{A_{\vec q}}^{\frac32} \|f_1 v_1\|_3 \|f_2 v_2\|_3 \|h\|_3.
\end{split}
\]   The sharp $A_{\vec q}$ weighted norm inequality for the sparse forms,   see \cite[Lemma 6.1]{culiuc2018domination}, has been used for the last bound.  The \emph{a priori} estimate we obtained is in particular the case $\vec p = (3,3,3)$ of the Proposition. The general case is now obtained by invoking the extrapolation result \cite[Theorem 2.1]{li2020extrapolation}.
\end{proof}

\subsection{Classical Weighted Sobolev Spaces}\label{subsec:class-sob}
{Recall that  the weighted Sobolev norm $W^{k,p}(v)$, for $k \in \N$, $0<p\le \infty$, and a weight $v$, is given by
	\[ \|f\|_{\dot W^{k,p}(v)} = \sum_{|\alpha|=k} \|[\partial^\alpha f]v\|_{L^p(\R^d)}, \quad \|f\|_{W^{k,p}(v)} = \sum_{j=0}^k \|f\|_{\dot W^{j,p}(v)}. \]}
\begin{theorem}\label{thm:sob-min}
Let $k_0,k_1,k_2 \in N$ with $k_0 \le \min\{k_1,k_2\}$ and $\delta>0$. Let $\Lambda$ be a $(k_1,k_2,\delta)$ CZ form such that 
	\begin{equation}\label{eq:b0} D^{k_0-|\gamma|}b^0_\gamma \in \BMO, \quad |\gamma| < k_0. \end{equation}
Then, \begin{equation}\label{eq:dom-sob}
	\begin{aligned} \sum_{|\kappa|=k_0}\Lambda(f,g,\partial^\kappa h) &\lesssim \sum_{|\gamma|=k_0} \MSS(f,\partial^{\gamma}g,h) + \MSS(g,\partial^\gamma f,h)\\ 
		&\quad + \sum_{|\gamma_\ell| \le k_\ell} \sum_{|\beta|=\max\{0,k_0-|\gamma|\}} \sum_{|\alpha|=k_0-|\beta|}  \pi_{\partial^\beta b_\gamma^0}(\partial^{\alpha_1} f, \partial^{\alpha_2}g,h)  \\
		&\quad + \sum_{|\gamma_\ell| \le k_\ell} \sum_{|\beta|=\max\{0,k_0-|\gamma|\}} \sum_{|\alpha|=k_0-|\beta|}  \pi_{\partial^\beta b_\gamma^0}(\partial^{\alpha_1} f, \partial^{\alpha_2}g,h)  \\
		&\quad+ \sum_{|\gamma_\ell| \le k_{\ell}}  \sum_{|\alpha|=k_0} \pi_{b_\gamma^{1}}(h,g,\partial^{\alpha} f) + \pi_{b_\gamma^{2}}(f,h,\partial^{\alpha}g). 
	\end{aligned} \end{equation}{ 
Proposition \ref{prop:weighted} then leads to  the following bounds. For $\vec p\in P_\circ$, $\vec v   \in A_{\vec p}$, $p=p(\vec p)$,	\begin{align} \label{eq:sob-hom} \|T(f,g)\|_{{\dot W}^{k_0,p}\left(\frac{1}{v_3}\right)} &\lesssim [\vec v]_{A_{\vec p}}^{\max\{p_1',p_2',p\}} \sum_{0 \le i+j \le k_0} \|f\|_{\dot W^{i,p_1}(v_1)} \|g\|_{\dot W^{j,p_2}(v_2)},\\
		\label{eq:sob-inhom}\|T(f,g)\|_{W^{k_0,p}\left(\frac{1}{v_3}\right)} &\lesssim [\vec v]_{A_{\vec p}}^{\max\{p_1',p_2',p\}} \|f\|_{W^{k_0,p_1}(v_1)} \|g\|_{W^{k_0,p_2}(v_2)}. \end{align}}
\end{theorem}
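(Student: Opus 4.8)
The plan is to derive the domination inequality \eqref{eq:dom-sob} by differentiating the representation theorem (Theorem \ref{thm:sym}) term-by-term, and then feed the output into Proposition \ref{prop:weighted}. The starting point is to write $\Lambda(f,g,\partial^\kappa h) = \sum_{i,j} U_j^i(f,g,\partial^\kappa h) + \sum \Pi^{i*}_{b_\gamma^{i*},\gamma}(f,g,\partial^\kappa h)$ and move the derivative $\partial^\kappa$ off of $h$ and onto the other slots. For the wavelet forms, the prototype is $U(f,g,\partial^\kappa h) = \int_{Z^d} \l f \otimes g, \nu_z \r \l \partial^\kappa h, \phi_z \r \d\mu(z) = (-1)^{|\kappa|}\int_{Z^d}\l f \otimes g, \nu_z\r \l h, \partial^\kappa \phi_z\r\d\mu(z)$; since $\phi_z$ is a mother wavelet with many vanishing moments, $\partial^\kappa \phi_z = t^{-|\kappa|}(\partial^\kappa\phi)_z$ is again a (rescaled) mother wavelet, so the issue is to relocate the factor $t^{-|\kappa|}$. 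Here one integrates by parts in the $\nu_z$ slot: writing $\nu_z$ via Proposition \ref{prop:psi-nu} and using the anti-derivative identity $\l f\otimes g, \nu_z\r = \sum \l \partial^{\gamma} f \otimes \partial^{\gamma'} g, t^{-|\gamma|-|\gamma'|}\partial^{-\gamma}_x\partial^{-\gamma'}_y \nu_z\r$, one converts $t^{-|\kappa|}\nu_z$ into a new admissible wavelet (still in $\Psi^{0,\delta;1,0}_z$ by the last clause of Proposition \ref{prop:psi-nu}) at the cost of moving $|\kappa|$ derivatives onto $f$ and $g$. That produces the terms $\MSS(f,\partial^\gamma g,h)$ and $\MSS(g,\partial^\gamma f,h)$ after bounding the intrinsic coefficients by their sup (which is the definition of $\MSS$).

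\emph{For the paraproduct forms} the bookkeeping is the genuine work. Recall $\Pi_{b,\gamma}(f,g,h)=\int_{Z^d}\l b,(\partial^{-\gamma_1-\gamma_2}\phi)_z\r\l f,\varu_z^{\gamma_1}\r\l g,\varu_z^{\gamma_2}\r\l h,\phi_z\r\d\mu(z)$. In the $i=0$ (main) paraproduct, differentiating $h$ and integrating by parts $|\kappa|$ times sends $t^{-|\kappa|}$ through; the factor $(\partial^{-\gamma_1-\gamma_2}\phi)_z$ absorbs part of it — since $D^{k_0-|\gamma|}b_\gamma^0\in\BMO$ by hypothesis \eqref{eq:b0}, one can write $t^{-|\kappa|}\l b_\gamma^0,(\partial^{-\gamma_1-\gamma_2}\phi)_z\r$ as $\l D^\beta b_\gamma^0,\,(\text{rescaled mean-zero wavelet})_z\r$ when $|\beta|=\max\{0,k_0-|\gamma|\}$ derivatives land on $b$, and the remaining $|\alpha|=k_0-|\beta|$ must be distributed onto $f$ and $g$ via the $\varu^{\gamma_\ell}$ slots — producing exactly the $\pi_{\partial^\beta b_\gamma^0}(\partial^{\alpha_1}f,\partial^{\alpha_2}g,h)$ terms (the fact that it appears twice in \eqref{eq:dom-sob} presumably records the two half-paraproduct contributions, or a Leibniz split between $f$ and $g$). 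For $i=1,2$, the roles of $h$ and one of $f,g$ are swapped by the $\Pi^{i*}$ construction, so $\partial^\kappa$ already sits (up to the involution) on $f$ or $g$: one just moves it there directly and uses that $b_\gamma^i\in\BMO$ with \emph{no} derivative needed on $b$, giving $\pi_{b_\gamma^1}(h,g,\partial^\alpha f)$ and $\pi_{b_\gamma^2}(f,h,\partial^\alpha g)$. Throughout, once each model object is realized as an admissible intrinsic wavelet or paraproduct form, the pointwise bound $|\l\cdots,\nu_z\r|\le \Psi_z^{\cdots}(\cdots)$ and the positivity of the integrand give \eqref{eq:dom-sob}.

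\emph{For the weighted estimates} \eqref{eq:sob-hom}--\eqref{eq:sob-inhom}, I would simply apply Proposition \ref{prop:weighted} to each summand on the right of \eqref{eq:dom-sob}: each $\MSS(\cdot,\cdot,\cdot)$ and each $\pi_b(\cdot,\cdot,\cdot)$ (with $b$ replaced by the relevant $\BMO$ function $\partial^\beta b_\gamma^0$, $b_\gamma^1$, or $b_\gamma^2$) satisfies the stated $L^{p_1}(v_1)\times L^{p_2}(v_2)\to L^p(v_3^{-1})$ bound with constant $[\vec v]_{A_{\vec p}}^{\max\{p_1',p_2',p\}}$. Since every term carries at most $k_0$ total derivatives split as $\partial^{\alpha_1}f,\partial^{\alpha_2}g$ with $|\alpha_1|+|\alpha_2|\le k_0$, summing over the finitely many $(\gamma,\beta,\alpha,\kappa)$ gives \eqref{eq:sob-hom}; the inhomogeneous bound \eqref{eq:sob-inhom} follows by adding the $k=0,\dots,k_0$ homogeneous estimates and using $\sum_{i+j\le k_0}\|f\|_{\dot W^{i,p_1}}\|g\|_{\dot W^{j,p_2}}\lesssim\|f\|_{W^{k_0,p_1}}\|g\|_{W^{k_0,p_2}}$.

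\emph{The main obstacle} I anticipate is the combinatorial derivative-distribution step inside the main paraproduct: carefully accounting for how $|\kappa|=k_0$ derivatives split among the three slots $b_\gamma^0$, $f$ (via $\varu^{\gamma_1}$), $g$ (via $\varu^{\gamma_2}$) — including checking that when $|\gamma|\ge k_0$ no derivative needs to hit $b$ (hence $|\beta|=\max\{0,k_0-|\gamma|\}$) and that the residual wavelets retain enough cancellation and decay to lie in the classes $\Psi_z^{0,\delta;\bullet}$ defining $\MSS$ and $\pi_b$. The anti-derivative bookkeeping ($\partial^{-\gamma}$ on $\varu^\gamma$ families, verifying the resulting functions are still admissible test functions with the required vanishing moments) is routine given Proposition \ref{prop:psi-nu} and Lemma \ref{lemma:psi-theta}, but it is where all the indices must be tracked precisely.
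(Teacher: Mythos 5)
Your proposal tracks the paper's proof exactly: differentiate each summand in the representation of Theorem \ref{thm:sym} and redistribute the $k_0$ derivatives from $h$ onto $f$, $g$, or $b_\gamma^0$ according to the cancellation structure of each model form, then apply Proposition \ref{prop:weighted} termwise and sum. One minor clarification: in the wavelet-form step you invoke the identity with $\partial_x^{-\gamma}\partial_y^{-\gamma'}\nu_z$, but since $\nu_z \in \Psi_z^{j,\delta;1,0}$ is cancellative only in the $y$ (second) variable, only $\gamma=0$ is admissible (this is precisely what the last clause of Proposition \ref{prop:psi-nu}, which you correctly cite, provides), so all $k_0$ derivatives must land on the single cancellative slot --- $g$ for $U_j^1$, $f$ for $U_j^2$ --- which is why \eqref{eq:dom-sob} contains $\MSS(f,\partial^{\gamma}g,h)$ and $\MSS(g,\partial^\gamma f,h)$ with $|\gamma|=k_0$ rather than mixed terms; the paper also treats the adjoint wavelet forms $U_j^3,\ldots,U_j^6$ slightly differently, unwinding to the kernel $\Upsilon_j$ and integrating by parts there before re-averaging, though the resulting $\MSS$ terms and their sparse bounds are the same.
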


If $T$ has vanishing paraproducts, i.e. $b_\gamma^0=0$, for $(|\gamma_1|,|\gamma_2|)<(k_1,k_2)$, then the corresponding terms vanish from (\ref{eq:dom-sob}) so that (\ref{eq:sob-hom}) becomes 
	\[ \|T(f,g)\|_{{\dot W}^{k_0,p}\left(\frac{1}{v_3}\right)} \lesssim [\vec v]_{A_{\vec p}}^{\max\{p_1',p_2',p\}} \left( \|f\|_{\dot W^{k_0,p_1}(v_1)} { \|gv_2\|_{L^{p_2}(\R^d)}} +  {\|fv_1\|_{L^{p_1}(\R^d)}} \|g\|_{\dot W^{k_0,p_2}(v_2)} \right). \]

Theorem \ref{thm:sob-min} is sharp in a couple of ways, but the precise sense must be explained. First, the appearance of the many norms on the right hand side is necessary when considering the entire class of $(k_1,k_2,\delta)$ CZOs. In other words, for each pair $(i,j)$ with $0 \le i+j \le k_0$, we can exhibit a $(k_1,k_2,\delta)$ CZO which only maps $\dot W^{i,p_1} \times \dot W^{j,p_2}$ into $\dot W^{k_0,p}$. In fact, such an operator is 
	\[ \l T(f,g),h \r = \Pi_{b,\gamma}(f,g,h), \quad |\gamma_1|=i, \, |\gamma_2|=j, \,D^{k_0-|\gamma|}b \in \BMO.\]
Taken in a similar sense, both the exponent of the weight characteristic and the condition $D^{k_0-|\gamma|}b_\gamma^0 \in \BMO$ are sharp. Concerning the exponent,   the following sharpness result holds.
\begin{proposition}\label{prop:exp}
Fix $\vec p\in P_\circ$,  $k_0,k_1,k_2 \in \N$ with $k_0 \le \min\{k_1,k_2\}$. For any weight vector $\vec v$, define
 \[ \|T\|_{k_0,\vec p,\vec v} \coloneqq \sup_{f,g}\frac{\|T(f,g)\|_{\dot W^{k_0,p}\left(\frac{1}{v_3}\right)}}{\sum\limits_{0 \le i+j \le k_0} \|f\|_{\dot W^{i,p_1}(v_1)} \|g\|_{\dot W^{j,p_2}(v_2)}}. \]
Then, for each $M,\delta>0$ and $\varphi$ satisfying $\varphi(t) = o(t^{\max\{p_1',p_2',p\}})$ as $t \to \infty$,
	\[ \sup_{\vec v, \, T} \frac{\|T\|_{\vec p,k_0,\vec v}}{\varphi([\vec v]_{A_{\vec p}})} = \infty\]
where the supremum is taken over all $\vec v \in A_{\vec p}$ and all $T$ which are $(k_1,k_2,\delta)$ CZOs satisfying $\|T\|_{\vec p,k_0,\vec w} \le M$ with $\vec w=(\cic{1}_{\R^d},\cic{1}_{\R^d}, \cic{1}_{\R^d}) $. 
\end{proposition}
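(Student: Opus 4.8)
The plan is to exhibit a single $(k_1,k_2,\delta)$ CZO $T$ together with a one-parameter family of weight vectors $\vec v_\lambda\in A_{\vec p}$ with $[\vec v_\lambda]_{A_{\vec p}}\to\infty$ and $\|T\|_{\vec p,k_0,\vec v_\lambda}\gtrsim[\vec v_\lambda]_{A_{\vec p}}^{\max\{p_1',p_2',p\}}$; replacing $T$ by $cT$ for a small constant $c$ makes $\|T\|_{\vec p,k_0,\vec w}\le M$ while still giving $\|T\|_{\vec p,k_0,\vec v_\lambda}\gtrsim c\,[\vec v_\lambda]_{A_{\vec p}}^{\max\{p_1',p_2',p\}}$, and since $\varphi(s)=o(s^{\max\{p_1',p_2',p\}})$ this forces $\|T\|_{\vec p,k_0,\vec v_\lambda}/\varphi([\vec v_\lambda]_{A_{\vec p}})\to\infty$. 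So everything reduces to producing such a pair $(T,\{\vec v_\lambda\})$.

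For the operator I would take the $0$-order paraproduct flagged in the paragraph after Theorem \ref{thm:sob-min}, namely $\langle T(f,g),h\rangle=\Pi_{b,0}(f,g,h)$ with $b$ chosen so that $\beta:=D^{k_0}b\in\BMO$. The mechanism is a clean derivative transfer: from $\phi_z=\Sy_z\phi$ one gets $D^{-k_0}\phi_z=t^{k_0}(D^{-k_0}\phi)_z$, hence $\langle b,\phi_z\rangle=t^{k_0}\langle\beta,(D^{-k_0}\phi)_z\rangle$, while $\partial^\alpha\phi_z=t^{-|\alpha|}(\partial^\alpha\phi)_z$. Carrying $\partial^\alpha$ with $|\alpha|=k_0$ inside the defining integral of $\Pi_{b,0}$ the two powers of $t$ cancel and
\[ \partial^\alpha T(f,g)=\int_{Z^d}\langle\beta,(D^{-k_0}\phi)_z\rangle\,\langle f,\varu_z^0\rangle\,\langle g,\varu_z^0\rangle\,(\partial^\alpha\phi)_z\,\d\mu(z)=:\Pi^{(\alpha)}_\beta(f,g), \]
an ordinary bilinear paraproduct with $\BMO$ symbol $\beta$ and mean-zero smooth wavelets $D^{-k_0}\phi,\partial^\alpha\phi$. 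Thus $\|T(f,g)\|_{\dot W^{k_0,p}(1/v_3)}\sim\sum_{|\alpha|=k_0}\|\Pi^{(\alpha)}_\beta(f,g)v_3^{-1}\|_{L^p}$; and since bilinear paraproducts are bounded $L^{p_1}\times L^{p_2}\to L^p$ with norm $\lesssim\|\beta\|_{\BMO}$, we get $\|T\|_{\vec p,k_0,\vec w}\lesssim\|\beta\|_{\BMO}<\infty$, so $T$ is an admissible $(k_1,k_2,\delta)$ CZO.

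It then remains to invoke the known sharpness of the weighted $L^{p_1}\times L^{p_2}\to L^p$ bound for bilinear paraproducts with exponent $\max\{p_1',p_2',p\}$ of $[\vec v]_{A_{\vec p}}$ (the lower-bound half of the sharp $A_{\vec p}$ theory, \cite{li2014sharp,lerner2019intuitive,culiuc2018domination}, realized by a Buckley-type power weight $\vec v_\lambda$ and explicit test functions). Since the $A_{\vec p}$-characteristic of a power weight is scale invariant, this example can be run with $f_\lambda,g_\lambda$ smooth and supported in a fixed ball, so that $\|f_\lambda\|_{\dot W^{i,p_1}(v_{1,\lambda})}\lesssim\|f_\lambda v_{1,\lambda}\|_{L^{p_1}}$ and likewise for $g_\lambda$, uniformly in $\lambda$, for all $0\le i\le k_0$. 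Choosing $\beta$ as in that example, for a fixed $\alpha_0$ with $|\alpha_0|=k_0$ one has $\|\Pi^{(\alpha_0)}_\beta(f_\lambda,g_\lambda)v_3^{-1}\|_{L^p}\gtrsim[\vec v_\lambda]_{A_{\vec p}}^{\max\{p_1',p_2',p\}}\|f_\lambda v_{1,\lambda}\|_{L^{p_1}}\|g_\lambda v_{2,\lambda}\|_{L^{p_2}}$, and the right-hand side dominates $[\vec v_\lambda]_{A_{\vec p}}^{\max\{p_1',p_2',p\}}\sum_{0\le i+j\le k_0}\|f_\lambda\|_{\dot W^{i,p_1}(v_1)}\|g_\lambda\|_{\dot W^{j,p_2}(v_2)}$ up to a constant; hence $\|T\|_{\vec p,k_0,\vec v_\lambda}\gtrsim[\vec v_\lambda]_{A_{\vec p}}^{\max\{p_1',p_2',p\}}$, as required.

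The main obstacle is the second step: verifying that $T=\Pi_{b,0}$ with $D^{k_0}b\in\BMO$ genuinely lies in the class of $(k_1,k_2,\delta)$ CZOs — that is, that its iterated monomial-testing conditions produce $\BMO$ functions $b^0_\gamma$ with $D^{k_0-|\gamma|}b^0_\gamma\in\BMO$ for all $|\gamma|<k_0$ (cf.\ the remark in Section \ref{sec:comments} that these are awkward to compute). Two ways around this: subtract from $\Pi_{b,0}$ its finitely many lower-order paraproducts $\Pi_{b^0_\gamma,\gamma}$, which are of strictly smaller effective order and contribute only terms $\lesssim[\vec v_\lambda]_{A_{\vec p}}^{\max\{p_1',p_2',p\}}\sum_{i+j\le k_0}\|f_\lambda\|_{\dot W^{i,p_1}(v_1)}\|g_\lambda\|_{\dot W^{j,p_2}(v_2)}$ and hence do not touch the blow-up; or, replace $T$ by a Coifman–Meyer bilinear multiplier with symbol supported where the first frequency dominates, for which $D^{k_0}T(f,g)=S(D^{k_0}f,g)$ with $S$ again a bilinear CZO, reducing the matter to the $\sigma=0$ sharpness directly. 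The remaining points — existence of the $\BMO$ symbol $\beta$ and the uniform Sobolev control of the $L^p$-extremizers — are routine once the scale-invariance of power-weight $A_{\vec p}$-characteristics is exploited.
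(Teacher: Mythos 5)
Your proposed example is a paraproduct $T=\Pi_{b,0}$ with $D^{k_0}b\in\BMO$; this is a genuinely different choice of operator from the paper's, and it runs into an obstacle that you flag but do not resolve. The paper instead exhibits the bilinear Riesz-type convolution operator
\[
R(f,g)(x_0)=\int K(x_0-x_1,x_0-x_2)f(x_1)g(x_2)\,\d x_1\,\d x_2,\qquad K(y_1,y_2)=\frac{y_1^1y_2^1}{(|y_1|^2+|y_2|^2)^{(2d+1)/2}},
\]
which \cite{li2014sharp} already shows is sharp at $k_0=0$. Because $R$ is translation-invariant it satisfies $\partial^\alpha R(f,g)=\sum_{\gamma\le\alpha}\binom{\alpha}{\gamma}R(\partial^\gamma f,\partial^{\alpha-\gamma}g)$ exactly, giving $\|R\|_{k_0,\vec p,\vec v}\sim\|R\|_{0,\vec p,\vec v}$ at the operator-norm level, and it has identically vanishing $T(1)$-type paraproducts, so the $(k_1,k_2,\delta)$ CZO membership is immediate. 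This is precisely what your route must labor to recover.

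The gap in your proposal is the CZO membership of $\Pi_{b,0}$. By Definition \ref{def:cz}, a $(k_1,k_2,\delta)$ CZO must satisfy the zeroth-order testing: $\Lambda(1,1,\psi)=\langle b_0^0,\psi\rangle$ with $b_0^0\in\BMO$. For $T=\Pi_{b,0}$ one has $b_0^0=b=D^{-k_0}\beta$, and for $k_0\ge 1$ the hypothesis $\beta=D^{k_0}b\in\BMO$ does \emph{not} give $b\in\BMO$ (in fact generically $b\notin\BMO$). So this $T$ is simply not in the admissible class, and subtracting the lower-order paraproducts $\Pi_{b_\gamma^0,\gamma}$, as you suggest, does not repair this: those subtractions are designed to zero out lower-order testing \emph{once it is already known to land in BMO}, which is exactly what fails here. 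Your second fallback, a Coifman--Meyer multiplier whose symbol commutes cleanly with $D^{k_0}$, is essentially the correct move and amounts to the paper's argument; the paper just instantiates it with a particular Riesz-type kernel so that sharpness at $k_0=0$ can be quoted from \cite{li2014sharp} without re-deriving it. There is one further soft spot in your write-up: you control the Sobolev denominator by taking the $L^p$-extremizers to be ``smooth and supported in a fixed ball,'' but the extremizers behind the power-weight sharpness in \cite{li2014sharp} are singular (truncated power functions), so controlling $\|f_\lambda\|_{\dot W^{i,p_1}(v_1)}$ uniformly for $0<i\le k_0$ is not routine. The paper avoids facing the test functions at all by proving $\|R\|_{k_0,\vec p,\vec v}\sim\|R\|_{0,\vec p,\vec v}$ via the Leibniz identity and then quoting the $k_0=0$ blow-up as a black box.
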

The analogous statement switching the input spaces also holds. When $k_0=0$, this result is contained in \cite[pp. 763-764]{li2014sharp}. Below, in Section \ref{subsubsec:sharp-proof}, we will give a slight modification of their proof, adapted to our smooth operators.

As regards the paraproduct assumptions (\ref{eq:b0}), notice that we are imposing additional requirements on the derivatives of $b_\gamma^0$, but not $b_\gamma^1$ and $b_\gamma^2$. One may compare to the linear case, where one can obtain $T:\dot W^{k,p}\to \dot W^{k,p}$ if and only if $b_0^\gamma$ \textit{vanishes} for $|\gamma| < k_0$. This phenomenon persists in the multilinear setting in the following sense.

\begin{proposition}\label{prop:bmo}
Let $T$ be a $(j+i,\delta)$ SI operator satisfying
	\[ \|T\|_{\dot{W}^{j,p_1} \times \dot{W}^{i,p_2} \to \dot W^{k,p}} \lesssim 1 \] 
for some $\vec p \in P_\circ$ and $p=p(\vec p) \in (d, \infty)$. Then for $(|\gamma_1|,|\gamma_2|) \le (j,i)$,
	\[ D^{k} T(x_1^{\gamma_1},x_2^{\gamma_2}) \left\{ \begin{array}{cl} \in BMO & \mbox{ for } |\gamma_1| = j, |\gamma_2|=i; \\
			=0 & \mbox{ for } |\gamma| < j+i. \end{array} \right. \]
\end{proposition}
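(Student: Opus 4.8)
The statement asserts a necessary-condition converse to the paraproduct hypotheses in Theorem~\ref{thm:sob-min}: boundedness $\dot W^{j,p_1}\times \dot W^{i,p_2}\to \dot W^{k,p}$ forces $D^k T(x_1^{\gamma_1},x_2^{\gamma_2})\in\BMO$ at the top order and forces it to vanish at lower orders. My plan is to test the hypothesized boundedness against carefully chosen inputs that are smooth, compactly supported modifications of the monomials $x_1^{\gamma_1}$, $x_2^{\gamma_2}$, then pass to the limit to extract information about the form $\Lambda(x_1^{\gamma_1},x_2^{\gamma_2},\psi)$ acting on mean-zero Schwartz test functions $\psi\in\cals_{|\gamma|}$, which by the definitions in Subsection~\ref{subsec:cz} is precisely $\langle b_\gamma^0, \partial^{-\gamma_1-\gamma_2}\psi\rangle$ (after subtracting lower-order paraproducts).

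First I would localize: replace $x_1^{\gamma_1}$ by $x_1^{\gamma_1}\chi_R(x_1)$ with $\chi_R$ a smooth bump equal to $1$ on $B(0,R)$, and similarly for $x_2$, and fix an $h$ whose $k$-th antiderivative $D^{-k}h$ is a fixed mean-zero bump; then $\langle D^k T(f_1^R,f_2^R), D^{-k}h\rangle = \langle T(f_1^R,f_2^R),h\rangle$, and the hypothesized bound together with $\|f_i^R\|_{\dot W^{j,p_i}}\lesssim 1$ (here one uses $p>d$, equivalently the homogeneity exponents line up so the monomial truncations have bounded homogeneous Sobolev norm — this is exactly where the restriction $p\in(d,\infty)$ enters) gives a uniform-in-$R$ bound on $\langle T(f_1^R,f_2^R),h\rangle$. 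Passing $R\to\infty$, the kernel/SI estimates from Subsection~\ref{subsec:si} control the tails and identify the limit with the intrinsically-defined pairing $\Lambda(x_1^{\gamma_1},x_2^{\gamma_2},h)$ on $h\in\cals_k$. Taking the supremum over all such $h$ with $\|D^{-k}h\|$ normalized in the predual of $\BMO$ (i.e.\ $h$ ranging over an $H^1$-type atomic family, using $p>d$ to ensure the relevant antiderivatives are admissible test functions), this supremum being finite is exactly the statement $D^k T(x_1^{\gamma_1},x_2^{\gamma_2})\in\BMO$, modulo the lower-order paraproduct corrections. To handle those corrections and get the dichotomy, I would run the argument inductively on $|\gamma|$: at the bottom order $|\gamma|=0$, boundedness $\dot W^{0,p_1}\times\dot W^{0,p_2}\to\dot W^{k,p}$ with $k\geq 1$ applied to the truncated constants forces $\langle T(1,1),h\rangle=0$ for $h\in\cals_k$ because the right side $\|1\cdot\chi_R\|_{L^{p_1}}\to 0$ is impossible — rather, one rescales: apply the bound to $\chi(x/R)$ whose $\dot W^{0,p_1}$ norm is $R^{d/p_1}\to\infty$, but $D^k T$ of it, tested against a fixed atom, must vanish in the limit, yielding $b_0^0$ orthogonal to $\cals_k$ in the homogeneous sense, i.e.\ $D^k b_0^0=0$; then bootstrap to $|\gamma|<j+i$.

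The main obstacle I anticipate is the bookkeeping around homogeneity and the precise sense in which $\Lambda$ acts on polynomials. The action $\Lambda(x^{\gamma_1},x^{\gamma_2},\cdot)$ is \emph{a priori} only defined as an element of the dual of $\cals_{k}$ (as noted in the paper, citing \cite{frazier88,benyi2003bilinear}), so one must show the weak-$*$ limit of $T(f_1^R,f_2^R)$ exists in that dual and coincides with it — this requires the kernel H\"older estimates to dominate the difference $T(f_1^{R'},f_2^{R'})-T(f_1^R,f_2^R)$ when paired with $h\in\cals_k$, using that $h$ kills the polynomial part of the kernel expansion up to order $k$. The condition $p\in(d,\infty)$ is what makes $\dot W^{k,p}\hookrightarrow$ (functions mod polynomials of degree $<k$) behave well against $H^1$ duality and what forces the truncated-monomial inputs to have bounded homogeneous Sobolev norm only when $|\gamma_i|$ matches the differentiation order — getting these exponent inequalities exactly right is the delicate point, and I would import the relevant computation essentially verbatim from the linear case, the analog of which is standard (and for $k_0=0$ is \cite[pp.~763--764]{li2014sharp} as the paper notes). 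The dichotomy between "$\in\BMO$ at top order" and "$=0$ below" then comes cleanly from the scaling: at top order the truncated monomial has $\dot W^{j,p_1}$-norm bounded away from $0$ and $\infty$, giving a $\BMO$ bound; strictly below, the relevant Sobolev norm is forced to $0$ under the natural rescaling, forcing the pairing itself to vanish.
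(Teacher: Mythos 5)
Your central claim that $\|f_i^R\|_{\dot W^{j,p_i}}\lesssim 1$ for $f_i^R=x_i^{\gamma_i}\chi_R$ with $|\gamma_i|$ matching the Sobolev index is false, and this is the step everything else in your plan rests on. With $|\gamma_1|=j$, the dominant Leibniz term in $\partial^{\gamma_1}\bigl(x^{\gamma_1}\chi(x/R)\bigr)$ is $\gamma_1!\,\chi(x/R)$, whose $L^{p_1}$ norm is $\sim R^{d/p_1}\to\infty$. So your claimed uniform-in-$R$ bound on $\langle T(f_1^R,f_2^R),h\rangle$ doesn't exist, and passing $R\to\infty$ gives nothing. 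A related confusion appears in your treatment of the lower-order case, where you write that one ``applies the bound to $\chi(x/R)$ whose $\dot W^{0,p_1}$ norm is $R^{d/p_1}\to\infty$, but $D^kT$ of it, tested against a fixed atom, must vanish in the limit'' — a diverging bound on the right does not force vanishing on the left, so this is not an argument. What actually makes the lower-order pairings vanish in the paper is the product scaling $R^{|\kappa_1|-j+d/p_1}\cdot R^{|\kappa_2|-i+d/p_2}=R^{|\kappa|-(j+i)+d/p}$, which tends to $0$ because $|\kappa|<j+i$ and $p>d$ gives $d/p<1$ — that is the precise role of $p\in(d,\infty)$, not the duality/atomic heuristic you propose.

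The paper's proof is structurally different and properly local: it never takes a global $R\to\infty$ limit to get the $\BMO$ membership. Instead it works cube by cube, replacing the monomial by its translate $p_Q(x_1,x_2)=(x_1-x_Q)^{\gamma_1}(x_2-x_Q)^{\gamma_2}$ centered at $Q$, cutting off at scale $\ell(Q)$, and estimating the oscillation $\fint_Q|\partial^\alpha Tp_Q - c_Q|$. The near part is controlled by the operator hypothesis applied to $\phi_Q p_Q^{\gamma_1}$, $\phi_Q p_Q^{\gamma_2}$ whose homogeneous Sobolev norms are $\sim\ell(Q)^{d/p_1}$ and $\sim\ell(Q)^{d/p_2}$; together with the H\"older factor $|Q|^{-1/p}=\ell(Q)^{-d/p}$ and $1/p=1/p_1+1/p_2$, the powers of $\ell(Q)$ cancel to give a bound of $1$ uniformly over all $Q$ — this is the $\BMO$ estimate, and it is obtained without any limiting procedure. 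The tails are handled by the kernel H\"older estimate exactly as you anticipated. The dichotomy then comes from the fact, proved by the scaling computation above, that $T(x_1^{\kappa_1},x_2^{\kappa_2})$ is a polynomial of degree $<k$ when $(|\kappa_1|,|\kappa_2|)<(j,i)$, so $\partial^\alpha$ of it vanishes; expanding $p_Q$ binomially then transfers the local oscillation bound from $p_Q$ to the untranslated monomial. Your broad intuition about where $p>d$ and the kernel estimates enter is not wrong, but the execution — particularly the claimed unit bound on the truncated monomials and the absence of a cube-by-cube renormalization — does not yield the result.
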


As in the above discussion, such a result does not apply to our multilinear operators directly since they do not, as a whole, preserve homogeneous Sobolev spaces. However, if some portion of the operator---as seen from the proof, $\Pi_{b_\gamma^0}$---does map accordingly, then we recover the conditions on $b_\gamma^0$. We present the proof of Proposition \ref{prop:bmo} in Section \ref{subsubsec:sharp-proof}.

\subsubsection{Proof of Theorem \ref{thm:sob-min}}\label{subsubsec:proof-sob-min}
We first deal with $U_j^1$ for $j \ge k_0$.
Integrating by parts twice, we have, with $z=(w,t)$,
	\[ \sum_{|\kappa|=k_0} \lip f \otimes g, \nu_z \rip \lip \partial^\kappa h,\phi_z \rip  = \sum_{|\kappa|=k_0} \lip f \otimes g, \nu_z \rip t^{-k_0} t^{k_0}\lip h,\partial^\kappa\phi_z \rip \]
	\[= \sum_{|\kappa|=k_0} \sum_{|\gamma|=k_0}\lip f \otimes \partial^{\gamma}g, t^{-k_0}\partial_y^{-\gamma}\nu_z\rip \lip h,t^{k_0} \partial^\kappa\phi_z \rip. \]
Noting that $t^{|\gamma|} \partial^{-\gamma}_y \Psi^{j,\delta;1,0}_{z} \subset \Psi^{0,\delta;1,0}_{z}$ for $|\gamma| \le j$ (see Proposition \ref{prop:psi-nu}), we obtain $\nu'_z \in \Psi^{0,\delta;1,0}_{z}$ and $\phi'=\partial^{\kappa} \phi$ with mean zero such that
	\[ \sum_{|\kappa|=k_0} \lip f \otimes g, \nu_z \rip \lip \partial^\kappa h,\phi_z \rip = \sum_{|\kappa|=k_0}\sum_{|\gamma|=k_0}\lip f \otimes \partial^{\gamma} g, \nu'_z \rip \lip h,\phi'_z \rip.\]
In this way, 
	\[ \sum_{|\kappa|=k_0}\left| U(f,g,\partial^{\kappa} h) \right| \lesssim \sum_{|\gamma|=k_0}\MSS(f,\partial^{\gamma}g,h).\]
The same argument is applied to each $U_j^2$, placing all the derivatives on $f$ and bounding above by $\sum_{|\gamma|=k_0}\Pi(g,\partial^\gamma f,h)$ since in this case $g$ is in the non-cancellative position. For the remaining wavelet forms, $U_j^3,\ldots,U_j^6$---the ``adjoint'' ones---we have more freedom with the derivatives. In fact, for any $|\kappa| =k^* \le k_1+k_2$, we can unwind the wavelet form and integrate by parts there. Let us only do $U_3^j$. Integrating by parts twice as before,
	\[ U_j^3(f,g,\partial^\kappa h) = \int\limits_{\substack{(w,t) \in Z^d\\(u,v,s) \in Z(w,t)}} \Upsilon_j(u,v,w,s,t) \lip f,\phi_{w,t} \rip \lip g,\psi_{v,s}^1 \rip \lip \partial^{\kappa} h,\phi_{u,s} \rip \dfrac{\d s \, \d u \, \d v \, \d t \, \d w}{st}\]	
	\[ = \sum_{|\gamma|=k^*} \int\limits_{\substack{(w,t) \in Z^d\\(u,v,s) \in Z(w,t)}} \frac{\Upsilon_j(u,v,w,s,t) t^{|\kappa|}}{s^{|\kappa|}} \lip \partial^{\gamma}f,(\partial^{-\gamma}\phi)_{w,t} \rip \lip g,\psi_{v,s}^1 \rip \lip h,(\partial^\kappa\phi)_{u,s} \rip \dfrac{\d s \, \d u \, \d v \, \d t \, \d w}{st}.\]
The new symbol $\Upsilon_j(u,v,w,s,t)(\frac{t}{s})^{|\gamma|}$ satisfies much better estimates than needed in Proposition \ref{prop:psi-nu}, and in fact supplies $\nu'_z \in \Psi^{j+|\gamma|,\delta;0,1}_{z} \subset \Psi^{0,\delta;0,1}_{z}$ which by the same argument as before gives $\sum_{|\kappa|=k^*} |U_j^i(f,g,\partial^\kappa h)| \lesssim \sum_{|\gamma|=k^*} \MPS(g,\partial^{\gamma}f,h)$.

The argument used on $U_j^1,U_j^2$ also applies to $\Pi^{i*}_{b_\gamma^i,\gamma}$ for $i=1,2$ 
since they have the same cancellation structure in the first two arguments. In this way, 
	\[ \sum_{i=1}^2 \sum_{|\kappa|=k^*} \Pi^{i*}_{b_\gamma^i,\gamma}(f,g,\partial^\kappa h) \lesssim \sum_{|\gamma|=k^*} \pi_{b_\gamma^1}(h,g,\partial^{\gamma}f) + \pi_{b_\gamma^2}(f,h,\partial^{\gamma}g) \]
and for $\tilde \Pi$.
However, we can actually see that the number of derivatives, $k^*$, can be taken all the way up to $k_1+k_2$ since we placed the extra ones in the fully cancellative position. 

The final term to estimate is the paraproduct $\Pi_{b_\gamma^0,\gamma}$. We are restricted here since the $\gamma_\ell$-family $\varu^{\gamma_\ell}_z$ only has vanishing moments up to $\gamma_\ell$. So we can only place $|\gamma_1|$ derivatives on $f$ and $|\gamma_2|$ on $g$, and $k^*-|\gamma|$ derivatives remain on $h$. These must go on the symbol $b_\gamma$. We follow the above reasoning to obtain 
	\begin{align*} \sum_{|\kappa|=k^*} \Pi_{b_\gamma,\gamma}(f,g,\partial^{\kappa}h) &= \sum_{|\kappa|=k^*} \sum_{|\alpha_\ell|=|\gamma_\ell|} \sum_{|\beta|=k^*-|\gamma|} \int \lip \partial^{\beta} b, (\partial^{-\beta-\gamma_1-\gamma_2} \phi)_{w,t} \rip \dfrac{\d t\d w}{t}\\
		&\quad \times \lip \partial^{\alpha_1} f, (\partial^{-\alpha_1}\varu^{\gamma_2})_{w,t} \rip \lip \partial^{\alpha_2} g, (\partial^{-\alpha_2}\varu^{\gamma_2})_{w,t} \rip \lip h,(\partial^{\kappa} \phi)_{w,t} \rip \\
	&\le \sum_{|\alpha_\ell|=|\gamma_\ell|} \sum_{|\beta|=k^*-|\gamma|} \pi_{\partial^\beta b}(\partial^{\alpha_1}f,\partial^{\alpha_2}g,h). 
	\end{align*}
\qed

From the proof we can see that the only thing holding us back from taking $k_0$ all the way up to $k_1+k_2$ (represented by $k^*$ in the proof) are the wavelet forms $U_j^1$ and $U_j^2$ for $j< k_1+k_2$. Thus, if these vanish or have some hidden regularity, we obtain the better result.

\begin{cor}\label{cor:sob-van}
Let $\Lambda$ be a $(k_1,k_2,\delta)$ CZ form. Assume that $k^* \le k_1+k_2$, $D^{k^*-|\gamma|}b_\gamma^0 \in \BMO$ for $|\gamma| \le k^*$, and that $U_j^1$ and $U^2_j$ are zero for $j < k^*$. Then, for any {$\vec p \in P_\circ$, $p=p(\vec p)$, and $\vec v \in A_{\vec p}$},
	\[ \|T(f,g)\|_{{W}^{k^*,p}(\R^d,\frac 1 {v_3})} \lesssim [\vec v]_{A_{\vec p}}^{\max\{p_1',p_2',p\}} \|f\|_{W^{k^*,p_1}(v_1)} \|g\|_{W^{k^*,p_2}(v_2)} .\]
\end{cor}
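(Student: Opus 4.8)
The plan is to rerun the proof of Theorem~\ref{thm:sob-min} with $k^*$ in place of $k_0$, exploiting that the only obstruction there to raising the output derivative count up to $k_1+k_2$ came from the wavelet forms $U_j^1,U_j^2$ with $j<k_1+k_2$, whose first two slots carry only limited cancellation; by hypothesis those with $j<k^*$ are now absent. Apply the representation Theorem~\ref{thm:sym} to write $\Lambda=\sum_{i,j}U_j^i+\sum_{i,\gamma}\Pi^{i*}_{b_\gamma^{i*},\gamma}$, discard $U_j^1,U_j^2$ for $j<k^*$, and for each $0\le m\le k^*$ estimate $\sum_{|\kappa|=m}\Lambda(f,g,\partial^\kappa h)$ term by term by transferring the $m$ derivatives off $h$ through integration by parts, exactly as in Subsection~\ref{subsubsec:proof-sob-min}.

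I would group the surviving terms as follows. For $U_j^1,U_j^2$ with $j\ge k^*$: since $m\le k^*\le j$, the inclusion $t^{|\gamma|}\partial_y^{-\gamma}\Psi_z^{j,\delta;1,0}\subset\Psi_z^{0,\delta;1,0}$ for $|\gamma|\le j$ from Proposition~\ref{prop:psi-nu} lets me move all $m$ derivatives onto the partner of the non-cancellative slot, dominating by $\sum_{|\gamma|=m}\MSS(f,\partial^\gamma g,h)$, resp.\ $\MSS(g,\partial^\gamma f,h)$. For the adjoint wavelet forms $U_j^3,\dots,U_j^6$ and the adjoint paraproducts $\Pi^{1*}_{b_\gamma^1,\gamma},\Pi^{2*}_{b_\gamma^2,\gamma}$, the output slot is fully cancellative, so as in the theorem the $m\le k^*\le k_1+k_2$ derivatives are absorbed there at no cost and redistributed, producing terms $\MSS(g,\partial^\gamma f,h)$, $\pi_{b_\gamma^1}(h,g,\partial^\gamma f)$, $\pi_{b_\gamma^2}(f,h,\partial^\gamma g)$ with $|\gamma|=m$. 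For the principal paraproduct $\Pi_{b_\gamma^0,\gamma}$ the families $\varu_z^{\gamma_\ell}$ have vanishing moments only up to $|\gamma_\ell|$, so at most $|\gamma_1|$ derivatives can reach $f$ and $|\gamma_2|$ reach $g$; the residual $\beta$, $|\beta|=m-|\gamma|$ when $m>|\gamma|$, must land on $b_\gamma^0$, and the hypothesis $D^{k^*-|\gamma|}b_\gamma^0\in\BMO$ --- together with the boundedness on $\BMO$ of the iterated Riesz-type multipliers relating $\partial^\beta$ to $D^{|\beta|}$ --- yields $\partial^\beta b_\gamma^0\in\BMO$, so this term is controlled by $\pi_{\partial^\beta b_\gamma^0}(\partial^{\alpha_1}f,\partial^{\alpha_2}g,h)$ with $|\alpha_\ell|=|\gamma_\ell|$. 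Altogether, for each $0\le m\le k^*$ one obtains a domination of $\sum_{|\kappa|=m}\Lambda(f,g,\partial^\kappa h)$ by a finite sum of forms $\MSS(F_1,F_2,|h|)$ and $\pi_b(F_1,F_2,|h|)$, $b\in\BMO$, where $F_1=\partial^{\alpha_1}f$, $F_2=\partial^{\alpha_2}g$ with $|\alpha_1|,|\alpha_2|\le k^*$.

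To conclude, apply Proposition~\ref{prop:weighted} to the (finitely many) bilinear operators underlying those $\MSS$ and $\pi_b$ forms: each maps $L^{p_1}(v_1)\times L^{p_2}(v_2)\to L^p(\tfrac1{v_3})$ with norm $\lesssim[\vec v]_{A_{\vec p}}^{\max\{p_1',p_2',p\}}$ for $\vec p\in P_\circ$, $\vec v\in A_{\vec p}$, $p=p(\vec p)$. By the triangle (quasi-triangle if $p<1$) inequality in $L^p$ and the domination above, $\|\partial^\kappa T(f,g)\|_{L^p(\frac1{v_3})}\lesssim[\vec v]_{A_{\vec p}}^{\max\{p_1',p_2',p\}}\sum_{|\alpha_1|,|\alpha_2|\le k^*}\|\partial^{\alpha_1}f\|_{L^{p_1}(v_1)}\|\partial^{\alpha_2}g\|_{L^{p_2}(v_2)}$ for every $|\kappa|\le k^*$; summing over $0\le|\kappa|\le k^*$ and bounding the right-hand side by $\|f\|_{W^{k^*,p_1}(v_1)}\|g\|_{W^{k^*,p_2}(v_2)}$ gives the asserted estimate.

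The argument is almost entirely a rerun of the proof of Theorem~\ref{thm:sob-min}; the points I expect to require care --- though they are mild --- are the bookkeeping that, once the $U_j^1,U_j^2$ with $j<k^*$ are removed, no copy of $f$ or $g$ is ever asked to carry more than $k^*$ derivatives (for the limited-cancellation slots this is precisely the constraint $m\le j$, met because the surviving such forms have $j\ge k^*$, and for the principal paraproduct it is the split of $m$ into $|\gamma|$ derivatives on the inputs and $m-|\gamma|$ shunted onto $b_\gamma^0$), together with the elementary transfer $D^{k^*-|\gamma|}b_\gamma^0\in\BMO\Rightarrow\partial^\beta b_\gamma^0\in\BMO$ for $|\beta|\le k^*-|\gamma|$.
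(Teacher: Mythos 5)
Your proposal is correct and follows essentially the same route as the paper, which establishes the corollary by the remark immediately preceding it: the only obstruction to raising the derivative count past $\min\{k_1,k_2\}$ in the proof of Theorem~\ref{thm:sob-min} is the presence of the limited-cancellation wavelet forms $U_j^1,U_j^2$ with $j<k^*$, and once those are assumed to vanish the proof runs verbatim with $k^*$ in place of $k_0$. Your detailed bookkeeping (moving $m\le k^*\le j$ derivatives onto the cancellative second slot of the surviving $U_j^1,U_j^2$, freely absorbing derivatives on the adjoint wavelet and paraproduct forms, and splitting derivatives among $f$, $g$, $b_\gamma^0$ in the principal paraproduct) is exactly what the authors intend, and the concluding appeal to Proposition~\ref{prop:weighted} plus the quasi-triangle inequality matches their deduction of \eqref{eq:sob-inhom} from \eqref{eq:dom-sob}.

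One small clarification worth flagging, though it is no worse than what the paper itself leaves implicit: for the inhomogeneous norm at intermediate levels $m<k^*$ the residual derivative $|\beta|=m-|\gamma|$ can be strictly less than $k^*-|\gamma|$, and passing from $D^{k^*-|\gamma|}b_\gamma^0\in\BMO$ to $\partial^\beta b_\gamma^0\in\BMO$ then requires not only Riesz transforms but also the fact that $b_\gamma^0\in\BMO$ (supplied by the CZ-form definition) together with $D^{\ell}b\in\BMO$ controls the intermediate $D^m b$ in $\BMO$ for $0<m<\ell$ (a standard Littlewood--Paley interpolation). Your phrase about ``iterated Riesz-type multipliers'' slightly under-states this when $|\beta|<k^*-|\gamma|$, but the conclusion is correct.
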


The assumption that $U_j^1$ and $U_j^2$ vanish is the same as checking that $\Lambda$ has \textit{vanishing half paraproducts} of appropriate orders. However, this condition can be difficult to check, and it is not even clear that the product operator satisfies this assumption. One could replace this with the assumption that $U_j^{1},U_j^2$ are smoothing is some appropriate sense, similar to the assumption that $D^{k^*-|\gamma|}b_\gamma \in \BMO$, which makes the paraproduct forms smoothing operators. This is morally what we do in assuming $\Lambda$ has \textit{more} than $k_0$ smoothness in Theorem \ref{thm:sob-min}.

\subsubsection{Proofs of Sharpness}\label{subsubsec:sharp-proof}
Let us now return to the proofs of Propositions \ref{prop:exp} and \ref{prop:bmo} which demonstrate the sharpness of the exponent on the weight characteristic and the paraproduct assumptions (\ref{eq:b0}) in Theorem \ref{thm:sob-min}.
\begin{proof}[Proof of Proposition \ref{prop:exp}] From \cite{li2014sharp}, the bilinear operator given by 
	\[ R(f,g)(x_0) = \int K(x_0-x_1,x_0-x_2)f(x_1)g(x_2), \quad K(y_1,y_2) = \frac{y_1^1y_2^1}{(|y_1|^2+|y_2|^2)^{(2d+1)/2}} \]
satisfies
	\[ \sup_{\vec v \in A_{\vec p}} \frac{\|R\|_{0,\vec p,\vec v}}{\varphi([\vec v]_{A_{\vec p}})} = \infty. \]
$R$ is a $(k_1,k_2,1)$ CZO for any $k_1,k_2 \in \N$ and for any $\alpha$,
	\[ \partial^\alpha R(f,g) = \sum_{\gamma \le \alpha} R(\partial^\gamma f,\partial^{\alpha-\gamma} g) \]
which implies that $\|R\|_{k_0,\vec p,\vec v} \sim \|R\|_{0,\vec p,\vec v}$ for any weight $\vec v$.
\end{proof}

\begin{proof}[Proof of Proposition \ref{prop:bmo}]
Let $|\alpha|=k$, $|\gamma_1|=j$, and $|\gamma_2|=i$.
First we will show for each cube $Q$, setting $p_Q(x_1,x_2) = (x_1-x_Q)^{\gamma_1}(x_2-x_Q)^{\gamma_2}=:p_Q^{\gamma_1}(x_1)p_Q^{\gamma_2}(x_2)$,
	\begin{equation}\label{eq:tq} \fint_Q |\partial^{\alpha}Tp_Q-c_Q| \d x\lesssim 1 \end{equation}
where $c_Q = \int \partial^{\alpha}_{x_0} K(x_1,x_2,x_Q)(1-\phi_Q)p_Q(x_1,x_2) \, \d x_1 \, \d x_2$ and $x_Q$ is the center of the cube $Q$. Break up $p_Q = \phi_Qp_Q + (1-\phi_Q) p_Q$ where $\phi_Q(x_1,x_2) = \phi(\frac{x_1-x_Q}{\ell(Q)})\phi(\frac{x_2-x_Q}{\ell(Q)})$ and $\phi \in C^\infty_0(B(0,2))$ with $\phi =1$ on $B(0,1)$. We estimate
	\[ \fint_Q |\partial^{\alpha}T\phi_Qp_1|\d x \le |Q|^{-1} |Q|^{1-1/p} \|D^j \phi_Q p_Q^{\gamma_1}\|_{L^{p_1}(Q)} \|D^i \phi_Qp_Q^{\gamma_2}\|_{L^{p_2}(Q)}\lesssim 1. \]
On the other hand, for $x_0 \in Q$,
	\[ \begin{aligned}
	\Bigr|\int_{Q^c}\partial^\alpha_{x_0} K(x_1, & x_2,x_0)-\partial^{\alpha}_{x_0}K(x_1,x_2,x_Q)(1-\phi_Q(x_1,x_2))(x_1-x_Q)^{\gamma_1}(x_2-x_Q)^{\gamma_2} \, \d x_1 \, \d x_2 \Bigr|  \\
		&\lesssim \ell(Q)^\delta \int_{Q^c} \frac{|x_1-x_Q|^j |x_2-x_Q|^i}{(|x_Q-x_1|+|x_Q-x_2|)^{2d+j+i+\delta}} \, \d x_1 \, \d x_2\lesssim 1 .
	\end{aligned} \]
Thus (\ref{eq:tq}) is established. To complete the proposition, we must replace $Tp_Q$ by $T$ applied to the untranslated polynomial. To do so, we will show that the mapping properties of $T$ imply that $T(x_1^{\kappa_1},x_2^{\kappa_2})$ is a polynomial of degree $<k$ for $(|\kappa_1|,|\kappa_2|) < (j,i)$. Indeed, $\|x_1^{\kappa_1}\phi(x_1R^{-1})\|_{\dot{W}^{j,p}} \lesssim R^{|\kappa_1|-j+d/p_1}$ and similarly for $\kappa_2$, $i$,  and $p_2$. Thus, for any $\psi \in \cals_{k}$,
	\[ \begin{aligned}|\lip T(x_1^{\kappa_1},x_2^{\kappa_2}), \psi \rip| &= \lim_{R \to \infty} |\lip T(x_1^{\kappa_1}\phi(x_1R^{-1}),x_2^{\kappa_2}\phi(x_2R^{-1})),\psi \rip| \\
		&\lesssim \lim_{R\to \infty}\|T(x_1^{\kappa_1}\phi(x_1R^{-1}),x_2^{\kappa_2}\phi(x_2R^{-1}))\|_{\dot W^{k,p}} \\
		&\lesssim \lim_{R\to\infty} \|x_1^{\kappa_1}\phi(x_1R^{-1})\|_{\dot{W}^{j,p_1}} \|x_2^{\kappa_2}\phi(x_2R^{-1})\|_{\dot{W}^{i,p_2}}=0 \end{aligned} \]
which implies $T(x_1^{\kappa_1},x_2^{\kappa_2})$ is a polynomial of degree strictly less than $k$. In particular, \[\partial^\alpha T(x_1^{\kappa_1},x_2^{\kappa_2})=0.\]
For each cube $Q$ and $(|\gamma_1|,|\gamma_2|)=(j,i)$,
	\[ T(x_1^{\gamma_1},x_2^{\gamma_2}) = \sum_{\kappa < \gamma} c_{\kappa,\gamma} x_Q^{\gamma-\kappa} T(x_1^{\kappa_1},x_2^{\kappa_2}) + Tp_Q. \] 
Therefore $\partial^\alpha T(x_1^{\gamma_1},x_2^{\gamma_2}) = \partial^\alpha Tp_Q$.
\end{proof}

\subsection{Weighted Fractional Sobolev Space Estimates}\label{subsec:frac-sob}
Until now $D^\sigma$, the Fourier multiplier by $|\xi|^\sigma$, has only been used when $\sigma$ is a positive integer. We now generalize to any $\sigma$ positive. Denote by $W^{\sigma,p}(v)$ the weighted inhomogeneous fractional Sobolev space on $\R^d$ with norm defined by
	\[ \|f\|_{W^{\sigma,p}(v)} = \|[D^\sigma f] v\|_{L^p(\R^d)} + \sum_{j=0}^{\lfloor \sigma \rfloor} \|[D^{j}f]v\|_{L^p(\R^d)}. \]

Two new features enter here, which will require some modification of the intrinsic forms. First, $D^\sigma$ applied to a mother wavelet will no longer be a mother wavelet, however it (and many of its derivatives) will still have rapid decay. Second, when $D^\sigma$ is applied to a non-cancellative wavelet, its decay will only be $\l \cdot \r^{d+\sigma}$ (see Lemma \ref{lemma:frac-wave} below). This motivates the introduction of limited decay wavelets and intrinsic forms, $\pi_b^\sigma$ and $\MSSsig$, which we will define later. For now, know that they are defined the same as $\pi_b$ and $\MSS$ above, but the wavelet class corresponding to the first argument has worse decay, depending on $\sigma$. Furthermore, as we will show in Proposition \ref{prop:sparse} below, these forms have sparse $(1,p_2,p_3)$ bounds for 
	\[ \frac{1}{p_2} + \frac{1}{p_3} < \frac{\sigma+d}{d}. \]
This implies the following weighted Lebesgue space bounds.

\begin{proposition}\label{prop:weighted-sigma}
Let $T$ be the bilinear operator defined by either $\lip T(f_1,f_2),h\rip = \pi_b^\sigma(h,f_1,f_2)$ or $\MSSsig(h,f_1,f_2)$, defined by (\ref{eq:msssig}) and (\ref{eq:pi-sig}) with $b \in \BMO$. Let $\vec p\in P_\circ$, $\vec v   \in A_{\vec p}$, $p=p(\vec p)>\frac{d}{\sigma+d}$. For $\vec r = (r_1,r_2,1)$ with $1 \le r_i < p_i$ and $\frac 1{r_1} + \frac 1{r_2} < \frac{\sigma+d}{d}$,
	\[ \left\|\frac{T(f_1,f_2)}{ v_3}\right\|_{L^p(\R^d)} \lesssim [\vec v]_{A_{\vec p,\vec r}}^{\max\{\frac{p_i}{p_i-r_i},p\}} \prod_{j=1}^2\left\|f_j v_j \right\|_{L^{p_j}(\R^d)}. \]
\end{proposition}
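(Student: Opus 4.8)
The plan is to run the same three-step argument as in the proof of Proposition \ref{prop:weighted}: sparse domination, a single-exponent a priori bound with sharp weight dependence, and multilinear extrapolation with the tuple $\vec r$ frozen.

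First I would invoke Proposition \ref{prop:sparse}: since $r_i \ge 1$, $r_i < p_i$, and $\frac1{r_1}+\frac1{r_2} < \frac{\sigma+d}{d}$, both $\pi_b^\sigma$ and $\MSSsig$ admit sparse $(1,r_1,r_2)$ bounds. Fixing $f_1,f_2,h$ bounded with compact support and writing
\[ \Big\| \frac{T(f_1,f_2)}{v_3} \Big\|_{L^p(\R^d)} = \sup\Big\{\, |\langle T(f_1,f_2) v_3^{-1}, h \rangle| \;:\; \|h\|_{L^{p_3}(\R^d)} \le 1 \,\Big\}, \]
I would use $\langle T(f_1,f_2) v_3^{-1}, h\rangle = \pi_b^\sigma(h v_3^{-1}, f_1, f_2)$ (respectively $\MSSsig(h v_3^{-1}, f_1, f_2)$), with $h v_3^{-1}$ made admissible in the sparse inequality by the usual truncation of the weights, to obtain a sparse family $\Q$ with
\[ |\langle T(f_1,f_2) v_3^{-1}, h\rangle| \lesssim \sum_{Q \in \Q} |Q|\, \langle h v_3^{-1}\rangle_{1,Q}\, \langle f_1 \rangle_{r_1, Q}\, \langle f_2\rangle_{r_2,Q}. \]

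Next I would fix one convenient tuple $\vec q \in P_\circ$ with $\vec r \prec \vec q$ (for instance $q_1,q_2$ large and $q_3$ close to $1$; any tuple with $q_i > r_i$ works) and apply the sharp weighted estimate for $(r_1,r_2,1)$-sparse forms — \cite[Lemma 6.1]{culiuc2018domination} in the quantitative $A_{\vec q,\vec r}$ form of \cite{li2020extrapolation,nieraeth2019quantitative} — to get
\[ \sum_{Q \in \Q} |Q|\, \langle h v_3^{-1}\rangle_{1,Q}\langle f_1\rangle_{r_1,Q}\langle f_2\rangle_{r_2,Q} \lesssim [\vec v]_{A_{\vec q,\vec r}}^{\max\{\frac{q_1}{q_1-r_1},\, \frac{q_2}{q_2-r_2},\, q(\vec q)\}} \|f_1 v_1\|_{L^{q_1}}\|f_2 v_2\|_{L^{q_2}}\|h\|_{L^{q_3}}. \]
Taking the supremum over $\|h\|_{L^{q_3}} \le 1$ yields the desired bound for the single tuple $\vec q$. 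Finally I would apply the limited-range multilinear extrapolation theorem \cite[Theorem 2.1]{li2020extrapolation} (see also \cite{nieraeth2019quantitative}), with $\vec r$ held fixed, to upgrade this $\vec q$-estimate to the full range $\vec p \in P_\circ$, $\vec v \in A_{\vec p} \subseteq A_{\vec p,\vec r}$, with the sharp exponent $\max\{\frac{p_1}{p_1-r_1},\frac{p_2}{p_2-r_2},p\}$; the restriction $p(\vec p) > \frac{d}{\sigma+d}$ is precisely the range in which an admissible $\vec r$ exists, since $\frac1{r_1}+\frac1{r_2} > \frac1{p_1}+\frac1{p_2} = \frac1p$ forces $\frac1p < \frac{\sigma+d}{d}$.

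No single step is hard in isolation — each transcribes the $\sigma=0$ argument behind Proposition \ref{prop:weighted}. The only real obstacle is bookkeeping: verifying that the range $\frac1{r_1}+\frac1{r_2}<\frac{\sigma+d}{d}$ delivered by Proposition \ref{prop:sparse} is compatible with the hypotheses of the extrapolation theorem, that the sharp exponent on $[\vec v]$ produced by the base sparse-form estimate is preserved under extrapolation, and that the truncation making $h v_3^{-1} \in L^\infty$ with compact support legitimate is handled exactly as in the cited references. The genuinely new analytic content lies entirely in Proposition \ref{prop:sparse} itself — the sparse bound for $\pi_b^\sigma$ and $\MSSsig$ in the limited-decay regime — which this proof takes as a black box.
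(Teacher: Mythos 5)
Your argument is exactly the one the paper intends: the text accompanying Proposition \ref{prop:weighted-sigma} simply says it is ``proved in the same manner as Proposition \ref{prop:weighted},'' and you have correctly filled in the three steps of that proof --- the sparse $(1,r_1,r_2)$ domination from Proposition \ref{prop:sparse}, an a priori bound at one conveniently chosen tuple $\vec q$ with $\vec r \prec \vec q$ obtained from the sharp weighted estimate for sparse forms, and then extrapolation with $\vec r$ held fixed to cover the whole range. You also correctly identify the sharp exponent $\max\{\frac{p_1}{p_1-r_1},\frac{p_2}{p_2-r_2},p\}$, noting that $\frac{p_3}{p_3-r_3}=\frac{p_3}{p_3-1}=p$ because $r_3=1$, and the compatibility remark about $\frac{1}{r_1}+\frac{1}{r_2}>\frac{1}{p}$ forcing $p>\frac{d}{\sigma+d}$ is accurate.

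One slip should be removed: the inclusion $A_{\vec p}\subseteq A_{\vec p,\vec r}$ that you wrote in the extrapolation step is backwards. Since $\eps_j=\frac{p_jr_j}{p_j-r_j}$ is increasing in $r_j$, the local norms $\langle v_j^{-1}\rangle_{\eps_j,Q}$ and hence the characteristic $[\vec v]_{A_{\vec p,\vec r}}$ grow as $\vec r$ grows, so $A_{\vec p,\vec r}\subseteq A_{\vec p}=A_{\vec p,(1,1,1)}$, not the other way round. In particular $\vec v\in A_{\vec p}$ does \emph{not} give $\vec v\in A_{\vec p,\vec r}$. The hypothesis actually used by the extrapolation theorem (and the one that appears when this proposition is invoked in Theorem \ref{thm:sob-frac}) is $\vec v\in A_{\vec p,\vec r}$; the ``$\vec v\in A_{\vec p}$'' in the printed statement of the proposition appears to be a typo, which would make the displayed bound vacuous whenever $[\vec v]_{A_{\vec p,\vec r}}=\infty$. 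This does not break your proof --- your base case and your extrapolation both live in $A_{\vec q,\vec r}$ and $A_{\vec p,\vec r}$ respectively --- it is only the parenthetical inclusion that is wrong and should be deleted.
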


This proposition is proved in the same manner as Proposition \ref{prop:weighted}.

\begin{theorem}\label{thm:sob-frac}
Let $k_1,k_2 \in \N$ and $\sigma,\delta>0$ with $\sigma \le \min\{k_1,k_2\}$. If $\Lambda$ is a $(k_1,k_2,\delta)$ CZ form satisfying 
	\[ D^{\sigma-|\gamma|}b^0_\gamma \in \BMO, \quad |\gamma| < \sigma,\]
then, 
	\begin{equation}\label{eq:frac-dom} \begin{aligned} &\quad \left| \Lambda(f,g,D^\sigma h) \right| \lesssim \MSSsig(h, g,D^\sigma f) + \MSSsig(h,f,D^\sigma g) + \MSS(f, D^\sigma g,h) + \MSS(g,D^\sigma f,h)\\ 
		& + \sum_{|\gamma_\ell| \le k_\ell} \pi^\sigma_{b_\gamma^{1}} (h,g,D^\sigma f) + \pi_{b_\gamma^{2}}^\sigma(h,f,D^\sigma g) \\
		& +\sum_{\substack{|\gamma_\ell| \le k_\ell,\\ |\gamma| < \sigma }} \pi_{D^{\sigma-|\gamma|} b^0_\gamma}(D^{|\gamma_1|} f, D^{|\gamma_2|}g,h) + \sum_{\substack{|\gamma_\ell| \le k_\ell, \\ |\gamma|>\sigma}} \pi_{b_\gamma^0}(D^{\min\{\sigma,|\gamma_1|\}} f, D^{\sigma-\min\{\sigma,|\gamma_1\}}g,h). 
	\end{aligned} \end{equation}
Applying the estimates on $\MSSsig$, $\MSS$, $\pi_b^\sigma$, and $\pi_b$ from Propositions \ref{prop:weighted} and \ref{prop:weighted-sigma}, we obtain the following fractional weighted Sobolev space estimate. Let $\vec p \in P_\circ$, $p=p(\vec p) > \frac{\sigma+d}{d}$, and $\vec r = (r_1,r_2,1)$ satisfying $1 \le r_i < p_i$ and $\frac{1}{r_1}+\frac{1}{r_2} < \frac{\sigma+d}{d}$. Then, for any $\vec v \in A_{\vec p,\vec r}$,
	\[ \|T(f,g)\|_{W^{\sigma,p}\left(\frac{1}{v_3}\right)} \lesssim [\vec v]_{A_{\vec p,\vec r }}^{\max\{\frac{p_i}{p_i-r_i},p\}}\|f\|_{W^{\sigma,p_1}(v_1)} \|g\|_{W^{\sigma,p_2}(v_2)}. \]
\end{theorem}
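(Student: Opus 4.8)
The form domination (\ref{eq:frac-dom}) is proved along the exact blueprint of the proof of Theorem \ref{thm:sob-min} in Section \ref{subsubsec:proof-sob-min}: apply the representation Theorem \ref{thm:sym} to $\Lambda$, test the third slot against $D^\sigma h$ (so that $\l D^\sigma T(f,g),h\r=\Lambda(f,g,D^\sigma h)$ by self-adjointness of $D^\sigma$), and bound each of the resulting model forms. The one genuinely new input is the interaction of the fractional operator with the wavelet classes, packaged by Lemma \ref{lemma:frac-wave}: $D^{\pm\sigma}$ of a mother wavelet is again a rapidly decaying cancellative wavelet, whereas $D^{\pm\sigma}$ of a non-cancellative wavelet, or of a wavelet with only finitely many vanishing moments, decays merely like $\l\cdot\r^{-(d+\sigma)}$. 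In every model form one writes $D^\sigma\phi_z=t^{-\sigma}(D^\sigma\phi)_z$ and compensates the factor $t^{-\sigma}$ by transferring $\sigma$ units of fractional smoothness onto a \emph{cancellative} argument through the fractional integration by parts $\l F,\theta\r=\l D^\sigma F,D^{-\sigma}\theta\r$ (legitimate once $\theta$ carries enough vanishing moments); the dilation factors then balance as $t^{-\sigma}\cdot t^{\sigma}=1$, or, in the unwound ``adjoint'' forms, as $t^{-\sigma}\cdot s^{\sigma}=(t/s)^\sigma\le1$, which is an actual gain since $s\ge t$ on $Z(w,t)$.

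\textbf{Wavelet forms.} For $U_j^1$ (and symmetrically $U_j^2$), where $h$ is paired with a mother wavelet, move the smoothness onto the cancellative member of the pair:
\[ U_j^1(f,g,D^\sigma h)=\int_{Z^d}\l f\otimes D^\sigma g,\ t^{-\sigma}D_y^{-\sigma}\nu_z\r\,\l h,(D^\sigma\phi)_z\r\,\d\mu(z), \]
and Lemma \ref{lemma:frac-wave} supplies the fractional analogue $t^{-\sigma}D_y^{-\sigma}\Psi^{j,\delta;1,0}_z\subset C\Psi^{0,\eta;1,0}_z$ (for $j\ge\sigma$) of the inclusion $t^{|\gamma|}\partial_y^{-\gamma}\Psi^{j,\delta;1,0}_z\subset\Psi^{0,\delta;1,0}_z$ used in Theorem \ref{thm:sob-min}; since $(D^\sigma\phi)_z$ still has rapid decay, this gives $\sum|U_j^1(f,g,D^\sigma h)|\lesssim\MSS(f,D^\sigma g,h)$ and likewise $\sum|U_j^2(\cdots)|\lesssim\MSS(g,D^\sigma f,h)$. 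For the adjoint forms $U_j^3,\dots,U_j^6$, unwind exactly as the summand $\sigma_1$ in the proof of Theorem \ref{thm:sym}: now one of $f,g$ sits in a non-cancellative slot and $h$ is paired with a mother wavelet $\phi_{u,s}$ at the larger scale $s$, so $\l D^\sigma h,\phi_{u,s}\r=s^{-\sigma}\l h,(D^\sigma\phi)_{u,s}\r$, and transferring $\sigma$ derivatives onto the fully cancellative slot via $\l f,\phi_{w,t}\r=t^{\sigma}\l D^\sigma f,(D^{-\sigma}\phi)_{w,t}\r$ produces the gain $(t/s)^\sigma$, which only improves the symbol $\Upsilon_j$, so Proposition \ref{prop:psi-nu} (in its fractional form, again via Lemma \ref{lemma:frac-wave}) applies; the cost is that $D^{-\sigma}$ hitting the non-cancellative wavelet degrades its decay to $\l\cdot\r^{-(d+\sigma)}$, landing the resulting wavelet in the limited-decay class defining $\MSSsig$, whence $\lesssim\MSSsig(h,g,D^\sigma f)+\MSSsig(h,f,D^\sigma g)$.

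\textbf{Paraproduct forms and the weighted conclusion.} The adjoint paraproducts $\Pi^{1*}_{b^1_\gamma,\gamma},\Pi^{2*}_{b^2_\gamma,\gamma}$ share the cancellation structure of $U_j^3,\dots$ and are handled identically, except that the $\gamma$-families $\varu^{\gamma_\ell}_z$ have only finitely many vanishing moments, so $D^\sigma\varu^{\gamma_\ell}$ has decay $\l\cdot\r^{-(d+\sigma+|\gamma_\ell|)}$ and these terms land on $\pi^\sigma_{b^1_\gamma}(h,g,D^\sigma f)$ and $\pi^\sigma_{b^2_\gamma}(h,f,D^\sigma g)$. For $\Pi_{b^0_\gamma,\gamma}$ one performs the bookkeeping already displayed in the statement: $\varu^{\gamma_\ell}_z$ permits transferring only $|\gamma_\ell|$ integer derivatives onto $f$ (resp.\ $g$), while $(\partial^{-\gamma_1-\gamma_2}\phi)_z$ is a mother wavelet paired with $b^0_\gamma$; hence, if $|\gamma|<\sigma$, put $|\gamma_1|$ derivatives on $f$, $|\gamma_2|$ on $g$, and the remaining fractional $\sigma-|\gamma|$ on $b^0_\gamma$ — this is where $D^{\sigma-|\gamma|}b^0_\gamma\in\BMO$ enters — producing $\pi_{D^{\sigma-|\gamma|}b^0_\gamma}(D^{|\gamma_1|}f,D^{|\gamma_2|}g,h)$, whereas if $|\gamma|\ge\sigma$ all $\sigma$ derivatives fit on $f,g$, split as $\min\{\sigma,|\gamma_1|\}$ on $f$ and the rest on $g$, producing $\pi_{b^0_\gamma}(D^{\min\{\sigma,|\gamma_1|\}}f,D^{\sigma-\min\{\sigma,|\gamma_1|\}}g,h)$. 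Assembling these estimates yields (\ref{eq:frac-dom}). The weighted bound then follows by pairing with $h$ and applying, summand by summand, Proposition \ref{prop:weighted} to the $\MSS$ and $\pi_b$ terms (sparse $(1,1,1)$ bounds, needing only $\vec v\in A_{\vec p}$, which holds since $A_{\vec p,\vec r}\subset A_{\vec p}$) and Proposition \ref{prop:weighted-sigma} to the $\MSSsig$ and $\pi^\sigma_b$ terms (whose limited-decay first slot forces sparse $(1,r_2',r_3')$-type control, hence $\vec v\in A_{\vec p,\vec r}$ with $\frac1{r_1}+\frac1{r_2}<\frac{\sigma+d}{d}$), with the sharp exponent $\max\{\frac{p_i}{p_i-r_i},p\}$ on $[\vec v]_{A_{\vec p,\vec r}}$; this controls $\|D^\sigma T(f,g)v_3^{-1}\|_{L^p}$, and the remaining lower-order summands of the inhomogeneous norm are recovered from the case $k_0=0$ of Theorem \ref{thm:sob-min} together with the just-proved top-order bound, using that $\|\cdot\|_{W^{\sigma,p}(v_3^{-1})}\sim\|D^\sigma\cdot\,v_3^{-1}\|_{L^p}+\|\cdot\,v_3^{-1}\|_{L^p}$ since $v_3^{-1}=v_1v_2$ is a Muckenhoupt weight.

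\textbf{Main obstacle.} The technical heart is the fractional version of Proposition \ref{prop:psi-nu}: one must verify that $D^{\pm\sigma}$ applied to members of the tensor classes $\Psi^{k,\delta;1,0}_z$ and to their antiderivatives, after the appropriate rescaling by a power of $t$, again lies — with only a $\sigma$-controlled loss of decay — in $\Psi^{0,\eta;1,0}_z$ or its limited-decay variant, \emph{and simultaneously} that the symbol generated by the averaging retains the decay $(t/s)^\sigma$ required to invoke Lemma \ref{lemma:star}. This is precisely Lemma \ref{lemma:frac-wave}, whose proof must dominate the nonlocal Riesz-potential tail against the vanishing moments of the wavelet; the limited-decay classes, the forms $\MSSsig,\pi^\sigma_b$, and the restriction $\frac1{r_1}+\frac1{r_2}<\frac{\sigma+d}{d}$ in the conclusion are exactly the devices that absorb this loss.
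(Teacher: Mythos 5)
Your overall strategy matches the paper's: apply the representation Theorem \ref{thm:sym}, move $\sigma$ units of fractional smoothness from the $h$-slot onto cancellative slots via the self-adjointness of $D^{\pm\sigma}$, and absorb the loss of decay into the limited-decay classes $\MSSsig,\pi^\sigma_b$. The paraproduct bookkeeping and the final sparse-to-weighted step are also handled as in the paper. However, the description of the wavelet-form mechanics contains two attribution errors that conceal a step not actually supplied by the cited lemma.

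First, for $U_j^1,U_j^2$ you claim that Lemma \ref{lemma:frac-wave} ``supplies'' the inclusion $t^{-\sigma}D_y^{-\sigma}\Psi^{j,\delta;1,0}_z\subset C\Psi^{0,\eta;1,0}_z$. It does not: Lemma \ref{lemma:frac-wave} applies $D^\sigma$ (not $D^{-\sigma}$) to the \emph{first}, non-cancellative slot and lands the result in the \emph{limited-decay} class $\Psi^{\sigma,k,\delta;1,0}_z$, which is an entirely different statement. The paper bypasses your claimed inclusion altogether: for $U_j^1,U_j^2,U_j^4,U_j^5$ it keeps the unwound integral representation with symbol $\Upsilon_j$, moves $D^{\pm\sigma}$ onto the mother wavelets $\phi_{u,s},\phi_{v,s},\phi_{w,t}$ (which retain rapid decay because $\phi$ has $2D\gg\sigma$ vanishing moments, so $\|D^{\pm\sigma}\phi\|_{\star,M,1}\lesssim 1$ for any $M$), and then re-applies Proposition \ref{prop:psi-nu}/Lemma \ref{lemma:star} to the new symbol $\Upsilon_j(s/t)^{\sigma}$. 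Your direct route, acting on $\nu_z$ itself, is plausible but requires a separate Riesz-potential estimate with vanishing moments in the $y$-variable; that is not in the paper, and as written you have no source for it.

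Second, for the adjoint forms you write that ``$D^{-\sigma}$ hitting the non-cancellative wavelet degrades its decay to $\lip\cdot\rip^{-(d+\sigma)}$.'' In the paper's argument for $U_j^3,U_j^6$ the operator that actually hits the non-cancellative slot is $D^{\sigma}_{x_1}$ applied to $\nu_z$ directly (this is precisely the content of Lemma \ref{lemma:frac-wave}); $D^{-\sigma}$ is applied only to the mother wavelet $\phi_z$ and produces the rapidly decaying $(D^{-\sigma}\phi)_z$ paired with $D^\sigma f$. Also note that the paper only needs $\MSSsig$ for $U_j^3,U_j^6$ and $\Pi^{1*},\Pi^{2*}$; $U_j^4,U_j^5$ go to full-decay $\MSS$ as well. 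Lumping all of $U_j^3,\dots,U_j^6$ under $\MSSsig$ is not fatal (the estimate is only weakened), but it obscures the actual structure and is another symptom of not tracking which slot is non-cancellative in each form.

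These are not merely cosmetic: Lemma \ref{lemma:frac-wave} is the unique technical device introduced for this theorem, and its sole role is $D^\sigma$ acting on a non-cancellative slot. Misplacing it both leaves the $\MSS$-terms without a proof and misrepresents why the $\MSSsig$ class (hence the restricted $A_{\vec p,\vec r}$ weight class) is unavoidable.
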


As in the classical case, if $b_\gamma^0=0$ then the corresponding terms vanish in (\ref{eq:frac-dom}) and we obtain the following simplification for the homogeneous norms.
\begin{cor}\label{cor:leibniz}
If in addition to the assumptions of Theorem \ref{thm:sob-frac}, 
	\[ b_\gamma^0=0, \quad (|\gamma_1|,|\gamma_2|) < (k_1,k_2),\]
then for any $\vec v \in A_{\vec p,\vec r}$,
	\[ \begin{split}
	&\quad \left\|\frac{D^\sigma T(f,g)}{v_3}\right\|_{L^p(\R^d)} \\ & \lesssim [\vec v]_{A_{\vec p,\vec r}}^{\max\{\frac{p_i}{p_i-r_i},p\}} \left( \|[D^\sigma f]v_1\|_{L^{p_1}(\R^d)}\|gv_2\|_{L^{p_2}(\R^d)} + \|fv_1\|_{L^{p_1}(\R^d)}\|[D^\sigma g]v_2\|_{L^{p_2}(\R^d)} \right). \end{split}\]
\end{cor}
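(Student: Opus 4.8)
\textbf{Proof of Corollary \ref{cor:leibniz}.}
The plan is to specialize Theorem \ref{thm:sob-frac} to the case of vanishing low-order paraproducts and then to read off the homogeneous bound. First I would invoke the pointwise-in-forms domination \eqref{eq:frac-dom}. Under the additional hypothesis $b^0_\gamma=0$ for $(|\gamma_1|,|\gamma_2|)<(k_1,k_2)$, the only surviving $b^0$-terms in \eqref{eq:frac-dom} are those indexed by $|\gamma_\ell|=k_\ell$; write $\gamma^{\max}=(\gamma_1,\gamma_2)$ for such an index, so $|\gamma^{\max}|=k_1+k_2\ge 2\sigma>\sigma$ (using $\sigma\le\min\{k_1,k_2\}$), hence the ``$|\gamma|<\sigma$'' sum is empty and only the ``$|\gamma|>\sigma$'' sum contributes, giving finitely many forms $\pi_{b^0_{\gamma^{\max}}}(D^{\min\{\sigma,|\gamma_1|\}}f,D^{\sigma-\min\{\sigma,|\gamma_1|\}}g,h)$ with $b^0_{\gamma^{\max}}\in\BMO$. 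Testing \eqref{eq:frac-dom} against $h$ and taking the supremum over $h$ with $\|h v_3\|_{L^{p'}}\le 1$ (where $p'=p/(p-1)=p_3$), by duality, controls $\|D^\sigma T(f,g)/v_3\|_{L^p}$ by a finite sum of the dual norms of the six families of intrinsic forms.

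Next I would apply the weighted bounds for the intrinsic forms. The forms $\MSSsig(h,g,D^\sigma f)$, $\MSSsig(h,f,D^\sigma g)$ and $\pi^\sigma_{b^1_\gamma}(h,g,D^\sigma f)$, $\pi^\sigma_{b^2_\gamma}(h,f,D^\sigma g)$ are handled by Proposition \ref{prop:weighted-sigma} with the roles of the arguments matched to the $(1,p_2,p_3)$-type sparse bound: in each case $D^\sigma$ falls on one of $f,g$ and the hypotheses $1\le r_i<p_i$, $\tfrac1{r_1}+\tfrac1{r_2}<\tfrac{\sigma+d}{d}$, $p>\tfrac{d}{\sigma+d}$ are exactly those required, yielding the factor $[\vec v]_{A_{\vec p,\vec r}}^{\max\{p_i/(p_i-r_i),p\}}$ times $\|[D^\sigma f]v_1\|_{L^{p_1}}\|gv_2\|_{L^{p_2}}$ or $\|fv_1\|_{L^{p_1}}\|[D^\sigma g]v_2\|_{L^{p_2}}$. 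The forms $\MSS(f,D^\sigma g,h)$, $\MSS(g,D^\sigma f,h)$ are controlled by Proposition \ref{prop:weighted} ($A_{\vec p}$, hence also by the monotonicity $A_{\vec p}\supset A_{\vec p,\vec r}$ in the relevant sense, or directly since $\vec v\in A_{\vec p,\vec r}$), again with one derivative on $f$ or $g$. For the remaining paraproduct terms $\pi_{b^0_{\gamma^{\max}}}(D^{\sigma_1}f,D^{\sigma_2}g,h)$ with $\sigma_1+\sigma_2=\sigma$ and $b^0_{\gamma^{\max}}\in\BMO$: since $0\le\sigma_i\le\sigma$, interpolation (or simply the inequality $\|D^{\sigma_i}u\|_{L^{p_i}(v_i)}\lesssim \|u\|_{L^{p_i}(v_i)}+\|D^\sigma u\|_{L^{p_i}(v_i)}$, valid for $\vec v\in A_{\vec p,\vec r}$ via the corresponding single-weight membership implied by the vector condition, cf.\ the references in Subsection \ref{ss:wclasses}) bounds $\|[D^{\sigma_1}f]v_1\|_{L^{p_1}}\|[D^{\sigma_2}g]v_2\|_{L^{p_2}}$ by the product appearing on the right of the claimed estimate; apply Proposition \ref{prop:weighted} to $\pi_{b^0_{\gamma^{\max}}}$ to get the weight factor. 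Summing the finitely many contributions gives exactly the stated homogeneous Leibniz-type bound.

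The main obstacle is bookkeeping in the paraproduct terms: one must verify that under $\sigma\le\min\{k_1,k_2\}$ the case $|\gamma|<\sigma$ in \eqref{eq:frac-dom} is vacuous once $b^0_\gamma$ vanishes for $(|\gamma_1|,|\gamma_2|)<(k_1,k_2)$, so that no fractional derivative $D^{\sigma-|\gamma|}$ of a $\BMO$ function with $0<\sigma-|\gamma|$ needs to be interpreted, and that the split $D^{\min\{\sigma,|\gamma_1|\}}f$, $D^{\sigma-\min\{\sigma,|\gamma_1|\}}g$ always distributes a total of exactly $\sigma$ derivatives across the two inputs with each exponent in $[0,\sigma]$. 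Once this is checked, each piece is a direct citation of Propositions \ref{prop:weighted} and \ref{prop:weighted-sigma} together with the elementary fact that the inhomogeneous weighted Sobolev norm dominates every intermediate weighted derivative norm. \qed
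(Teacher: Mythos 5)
Your overall route is the same as the paper's: read off the domination \eqref{eq:frac-dom} under the vanishing hypothesis and apply Propositions \ref{prop:weighted} and \ref{prop:weighted-sigma} term by term. However, the interpolation step you introduce for the surviving $b^0$-paraproduct is both unnecessary and unjustified, and the justification you offer for it is actually wrong.

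Here is the issue. After imposing $b^0_\gamma=0$ for $(|\gamma_1|,|\gamma_2|)<(k_1,k_2)$, the only surviving $\gamma$ in the $b^0$-sums of \eqref{eq:frac-dom} have $|\gamma_1|=k_1$, $|\gamma_2|=k_2$. You correctly note that $|\gamma|=k_1+k_2>\sigma$, so only the ``$|\gamma|>\sigma$'' sum contributes, giving $\pi_{b^0_\gamma}\bigl(D^{\min\{\sigma,|\gamma_1|\}}f,\,D^{\sigma-\min\{\sigma,|\gamma_1|\}}g,\,h\bigr)$. But you stop short of evaluating the exponents: since the hypothesis of Theorem \ref{thm:sob-frac} gives $\sigma\le\min\{k_1,k_2\}\le k_1=|\gamma_1|$, one has $\min\{\sigma,|\gamma_1|\}=\sigma$ and the second exponent is $\sigma-\sigma=0$. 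Thus the surviving paraproduct terms are exactly $\pi_{b^0_\gamma}(D^\sigma f,\,g,\,h)$ with $b^0_\gamma\in\BMO$, already in the correct homogeneous Leibniz shape. No interpolation between $D^{\sigma_1}f$ and $D^{\sigma_2}g$ with $0<\sigma_i<\sigma$ ever occurs. You flag this verification as ``the main obstacle'' in your final paragraph but do not carry it out; carrying it out would have shown the interpolation machinery is vacuous.

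This matters because the justification you give for the interpolation — that $\|D^{\sigma_i}u\|_{L^{p_i}(v_i)}\lesssim \|u\|_{L^{p_i}(v_i)}+\|D^\sigma u\|_{L^{p_i}(v_i)}$ is ``valid for $\vec v\in A_{\vec p,\vec r}$ via the corresponding single-weight membership implied by the vector condition'' — is false. The paper explicitly points out in the introduction that the single-weight memberships $v_i\in A_{p_i}$ are strictly more restrictive than the vector condition $\vec v\in A_{\vec p}$ (let alone $A_{\vec p,\vec r}$); indeed one of the advertised novelties of the Leibniz rule is precisely that it does \emph{not} require $v_i\in A_{p_i}$. Were the interpolation step actually needed, this would be a genuine gap in the proof. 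As it stands, simply deleting the interpolation discussion and observing $\min\{\sigma,|\gamma_1|\}=\sigma$, $\sigma-\min\{\sigma,|\gamma_1|\}=0$ repairs your argument and aligns it with the paper's (very short, essentially remark-level) proof.

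Two minor points: your duality reduction requires $p\ge 1$, whereas the corollary allows $p>\tfrac12$; this is immaterial because Propositions \ref{prop:weighted} and \ref{prop:weighted-sigma} already deliver the $L^p$ bound for the full sub-Banach range via extrapolation, so there is no need to invoke duality at the level of the form at all. Also, when citing the propositions to match the weight $v_1$ to $f$ and $v_2$ to $g$ in both halves of the right-hand side, there is an implicit relabeling of $(v_1,v_2)\mapsto(v_2,v_1)$ accompanying a permutation of the exponents, which the paper handles silently and which you should at least acknowledge.
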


These new forms will only be needed to represent $U_j^3$, $U_j^6$, and $\Pi_{b_\gamma^i,\gamma}^{i*}$ for $i=1,2$---the ones with $h$ in a non-cancellative position. So first we give the proof representing all the other terms using the original intrinsic forms $\MSS$ and $\pi_b$.

\subsubsection{Proof of terms involving $\MSS$ and $\pi_b$}
For the remaining wavelet forms, $U_j^1,U_j^2,U_j^4,U_j^5$, we use the fact that $D^{\pm \sigma} \phi_{u,s} =s^{\mp\sigma}(D^{\pm\sigma} \phi)_{u,s}$ and, since $\widehat \phi$ has a zero of order larger than $\sigma$ at the origin, $\|D^{\pm\sigma} \phi\|_{\star,M,1} \lesssim 1$ for any $M>0$. Let us focus on $U_j^1$. For any $\sigma \le \min\{k_1,k_2\}$, we unwind the wavelet form
	\begin{multline*} U^1_j(f,g,D^\sigma h) = \int\limits_{(w,t) \in Z^d }\int\limits_{(u,v,s) \in Z(w,t)}\Upsilon_j(u,v,w,s,t)s^{\sigma}t^{-\sigma} \lip D^\sigma f,(D^{-\sigma}\phi)_{u,s}\r \\ \times \lip g,\psi_{v,s}^1 \rip \lip h,(D^\sigma\phi)_{w,t} \rip \dfrac{\d s \, \d u \, \d v \, \d t \, \d w}{st}.\end{multline*}
By Proposition \ref{prop:psi-nu}, $\Upsilon_j(u,v,w,s,t)(\frac{s}{t})^{\sigma}$ supplies $\nu'_z \in \Psi^{\lfloor j-\sigma \rfloor,\delta;0,1}_{z} \subset \Psi^{0,\delta;0,1}_{z}$.

The paraproduct term $\Pi_{b_\gamma^0,\gamma}$ follows this same outline since we will not have to apply $D^\sigma$ to a non-cancellative wavelet. 

\subsubsection{Intrinsic Forms $\MSSsig$ and $\pi_b^\sigma$}

Let us begin by constructing the intrinsic form to represent $U_j^3$ (and similarly $U_j^6$). As before,
	\[ \lip D^\sigma h \otimes g, \nu_z \rip \lip f,\phi_z \rip  = \lip h \otimes g, t^{\sigma}D_x^{\sigma} \nu_z\rip \lip D^\sigma f,(D^{-\sigma} \phi)_z \rip. \]
Setting $\nu_z^\sigma=t^{\sigma}D^{\sigma}_y \nu_{z}$ and $\phi'=D^{-\sigma} \phi$, we obtain
	\[ U_j^3(f,g,D^\sigma h) = \int_{Z^d}\lip h \otimes g, \nu_z^\sigma \rip \lip D^\sigma f,\phi'_z \rip \, \d\mu(z) \]
which is dominated by the intrinsic form
	\begin{equation}\label{eq:msssig} \MSSsig(h,g,f) = \int_{Z^d} \sup_{\nu^\sigma_z \in \Psi_z^{\sigma,k,\delta;1,0}} \l h \otimes g,\nu_z^\sigma \r \Psi^{\cals_0}_z( D^\sigma f ) \d\mu(z) \end{equation}
for some new wavelet class $\Psi_z^{\sigma,k,\delta;1,0}$ which we now define. The decay of $\nu_z^\sigma$ can be computed, but it will be asymmetric in the two variables, so let us introduce a new norm
	\begin{equation}\label{eq:psi-sigma} \begin{aligned} \|\psi\|_{\sigma,\eta,\delta} &= \sup_{x\in \R^{2d}} \l x \r^{d+\sigma}(1+|x_2|)^{d+\eta-\sigma}|\psi(x)| \\
		&\quad \quad + \sup_{x\in \R^{2d}} \l x \r^{d+\sigma}(1+|x_2|)^{d+\eta-\sigma}\frac{|\psi(x)-\psi(x+h)|}{|h|^\delta} \end{aligned}\end{equation}
if $\sigma \le d+\eta$. Otherwise $\|\cdot\|_{\sigma,\eta,\delta} = \|\cdot\|_{\star,\eta,\delta}$ from (\ref{eq:star-norm}). Define the associated wavelet class by
	\[ \Psi^{\sigma,k,\delta;1,0}_z = \{ \psi \in C^{k} : \|(\Sy^1_z\Sy^2_z)^{-1}\psi\|_{\sigma,k+\delta,\delta} \le 1 \}. \]
Notice that if $k > \sigma > d$, then $\Psi^{\sigma,k,\delta;i,j}_z = \Psi^{0,\delta;i,j}_z$.
The next lemma will establish that this norm and wavelet class are the correct ones for the modified intrinsic forms.

\begin{lemma}\label{lemma:frac-wave}
Let $0<\sigma < k$. There exists $C>0$ such that
	\[ t^\sigma \|(\Sy^1_z\Sy^2_z)^{-1}D^\sigma_{x_1} \nu_z \|_{\sigma,k+\delta,\delta} \le C.\]
for all $\nu_z \in \Psi_z^{k,\delta;1,0}$.
\end{lemma}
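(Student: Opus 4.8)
The plan is to pass to the unit scale using the dilation covariance of $D^\sigma$, then to write $D^\sigma_{x_1}$ as an explicit singular integral acting in the first variable only, and to integrate its kernel against the weighted $C^{k,\delta}$ bounds that define $\Psi^{k,\delta;1,0}_z$.

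First, since $D^\sigma_{x_1}$ commutes with translations and, the symbol $|\xi_1|^\sigma$ being homogeneous of degree $\sigma$, satisfies $D^\sigma_{x_1}\Sy^1_z=t^{-\sigma}\Sy^1_z D^\sigma_{x_1}$, one has $t^\sigma(\Sy^1_z\Sy^2_z)^{-1}D^\sigma_{x_1}\nu_z=D^\sigma_{x_1}\bigl[(\Sy^1_z\Sy^2_z)^{-1}\nu_z\bigr]$, with $(\Sy^1_z\Sy^2_z)^{-1}\nu_z\in\Psi^{k,\delta;1,0}_{(0,1)}$ by definition of the class. Hence it suffices to prove $\|D^\sigma_{x_1}\nu\|_{\sigma,k+\delta,\delta}\lesssim1$ for $\nu\in C^k(\R^{2d})$ satisfying the unit-scale weighted estimates on $\partial^\gamma\nu$ and on their $\delta$-Hölder moduli for $|\gamma|\le k$, together with vanishing moments of $\nu$ in the second variable. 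When $\sigma$ is an even integer this is immediate, since then $D^\sigma_{x_1}$ is a constant-coefficient differential operator of order $\sigma\le k$ and $D^\sigma_{x_1}\nu$ inherits the defining bounds of $\nu$, which dominate the target weight because $\langle x_2\rangle\le\langle x\rangle$; so assume $\sigma$ is not an even integer.

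For the size bound I would use the pointwise representation of fractional powers of $-\Delta$ applied only in $x_1$: with $M:=\lfloor\sigma\rfloor+1\le k$,
\[ D^\sigma_{x_1}\nu(x_1,x_2)=c_{d,\sigma}\int_{\R^d}|y|^{-d-\sigma}\,\Delta^M_{y}\bigl[\nu(\cdot,x_2)\bigr](x_1)\,\d y, \]
where $\Delta^M_y$ is a suitable $M$-th order finite difference in $x_1$; the integral converges absolutely because $\nu(\cdot,x_2)\in C^M$ with $M>\sigma$ and $\nu$ decays. Fixing $x=(x_1,x_2)$, I would split the $y$-integral into $\{|y|\le1\}$, $\{1<|y|\le c|x_1|\}$ and $\{|y|>\max(1,c|x_1|)\}$. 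On the first two regions all translates $x_1-jy$ remain comparable to $x_1$, so $\langle(x_1-jy,x_2)\rangle\sim\langle x\rangle$; bounding $|\Delta^M_y\nu(x_1,x_2)|$ by $|y|^M\sup_{|\xi-x_1|\le M|y|}\max_{|\gamma|=M}|\partial^\gamma_{x_1}\nu(\xi,x_2)|$ on $\{|y|\le1\}$ and by $\sum_{j\le M}|\nu(x_1-jy,x_2)|$ on $\{1<|y|\le c|x_1|\}$, and using $\int_{|y|\le1}|y|^{M-d-\sigma}\,\d y<\infty$ and $\int_{|y|>1}|y|^{-d-\sigma}\,\d y<\infty$, these two contributions are controlled by the diagonal decay of $\nu$, which dominates the target weight. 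On the far region I would expand $\Delta^M_y$ into its $M+1$ terms: the $j=0$ term yields $|\nu(x_1,x_2)|\int_{|y|>c|x_1|}|y|^{-d-\sigma}\,\d y$, and for $1\le j\le M$ the substitution $z=x_1-jy$ turns the contribution into $\lesssim\int_{|x_1-z|\gtrsim\langle x_1\rangle}|x_1-z|^{-d-\sigma}|\nu(z,x_2)|\,\d z$; estimating this via the weighted bound on $\nu$ and the convolution inequality $\int_{\R^d}\langle x_1-z\rangle^{-a}\langle z\rangle^{-b}\,\d z\lesssim\langle x_1\rangle^{-\min\{a,b,a+b-d\}}$, after splitting $z$ according to whether $|z|\le|x_2|$, produces precisely the mixed weight $\langle x\rangle^{d+\sigma}\langle x_2\rangle^{d+k+\delta-\sigma}$. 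The structural reason is that convolving in the first variable against $|y|^{-d-\sigma}$ spends only order $d+\sigma$ of the decay in $x_1$, whereas $D^\sigma_{x_1}$ leaves $x_2$ untouched, so the remaining decay — and the $\delta$-Hölder gain — stays attached to the second variable.

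For the Hölder seminorm in $\|\cdot\|_{\sigma,k+\delta,\delta}$, with $h=(h_1,h_2)$ and $|h|\le1$, I would apply the same representation to $D^\sigma_{x_1}\nu(x+h)-D^\sigma_{x_1}\nu(x)$: the $x_1$-increment is handled by substituting $\nu(\cdot+h_1,x_2)-\nu(\cdot,x_2)$ for $\nu(\cdot,x_2)$ in the formula and using the $\delta$-Hölder bounds on the $x_1$-derivatives of $\nu$, while the $x_2$-increment is handled directly since $D^\sigma_{x_1}$ passes through it; for $|h|>1$ the Hölder bound follows from the size bound, exactly as at the end of Lemma~\ref{lemma:star}. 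I expect the main obstacle to be the far region of the $y$-split and the attendant matching of the asymmetric weight: one must track the relative sizes of $|x_1|$, $|x_2|$ and $|y|$ and decide at each step whether to spend the decay of $\nu$ or the decay $|y|^{-d-\sigma}$ of the fractional kernel; the remaining estimates are routine and parallel those in Lemma~\ref{lemma:star} and Proposition~\ref{prop:psi-nu}.
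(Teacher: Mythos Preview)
Your proposal is correct and follows essentially the same route as the paper: reduce to unit scale via the dilation covariance of $D^\sigma_{x_1}$, realize $D^\sigma$ as a hypersingular integral in the first variable, split the integration domain according to the size of the integration variable relative to $|x_1|$, and estimate each region from the weighted $C^{k,\delta}$ bounds defining the class. The only cosmetic difference is that the paper regularizes the integral $\int |u|^{-d-\sigma}\phi(u)\,\d u$ by subtracting the degree-$k$ Taylor polynomial of $\phi$ at the origin (citing Grafakos), whereas you use the equivalent $M$-th order finite-difference representation; the subsequent near/far splitting and the mechanism producing the asymmetric weight $\langle x\rangle^{-(d+\sigma)}\langle x_2\rangle^{-(d+k+\delta-\sigma)}$ are the same.
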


In this way, $U_j^3(f,g,h) \lesssim \MSSsig(h,g,D^\sigma f)$, $U_j^6(f,g,h) \lesssim \MSSsig(h,f,D^\sigma g)$, and similarly for the paraproducts, defining
	\begin{equation}\label{eq:pi-sig} \pi_b^\sigma(f,g,h) = \int_{Z^d} \Psi_z^{\cals_0}(b) \Psi_{z}^{\sigma,k,\delta;1,1}(f,g) \Psi_{z}^{\cals_0}(h) \, \d \mu(z). \end{equation}

\begin{proof}[Proof of Lemma \ref{lemma:frac-wave}] If $\nu_z$ is Schwartz, then this is well-known, see \cite{grafakos2014kato}. Our starting point is from the book \cite[Definition 2.4.5 and Theorem 2.4.6, pp. 127-130]{grafakos2008classical} which states that for all $\phi$ Schwartz and $\sigma>0$,
	\[ \int |\xi|^\sigma \widehat \phi(\xi) \, d\xi = c_{d,\sigma} \int |u|^{-d-\sigma} \phi(u) \, du. \]
The right hand side is well-defined once understood as
	\[ \int_{|u|<1} \frac{\phi(u) - \sum_{|\alpha| \le k} \frac{\phi^{(\alpha)}(0)}{\alpha !} u^\alpha}{|u|^{d+\sigma}} + \int_{|u|>1} \frac{\phi(u)}{|u|^{d+\sigma}} + \sum_{|\alpha| \le k} b(\sigma,k,\alpha) \phi^{(\alpha)}(0) \]
for any $k > \sigma$. Written this way the integrals are all absolutely convergent with bounds depending on $d,\sigma,k$ and linearly on the H\"older constant for $\partial^k\phi$ near $0$ and $\|\phi^{(\alpha)}(0)\|_\infty$. By density, the definition can be extended to functions in $\Psi^{k,\delta;1}_{(0,1)}$. Moreover, replacing $\phi$ by $\phi(\cdot-x)$ for some $x$, we obtain that the first and last terms decay as well as $(1+|x|)^{-(k+d+\delta)}$. The remaining (middle) term is estimated by splitting the integral into the region $|u| > |x|/2$ and $|u| \le |x|/2$.
The first region is controlled by the decay of $\phi$, let us say it is $M>d$, using the fact that the kernel is bounded away from zero since $|u+x| >1$. Then for any $\eta>0$, 
	\[ \int_{|u| > |x|/2|} |\phi(u)| \, du \lesssim (1+|x|)^{d+\eta-M} \int (1+|u|)^{M-d-\eta}|\phi(u)| \, \d u. \]
The other range is limited by $\sigma$ using the fact that in this range $|u+x| \ge |x|-|u| \ge |x|/2$ so that
	\[ \int_{|u|< |x|/2} \frac{|\phi(u)|}{|u+x|^{d+\sigma}} \, \d u \lesssim |x|^{d+\sigma}. \]
To complete the proof we use the translation invariance and homogeneity of the kernel to obtain
	\[ (\Sy^{1}_z)^{-1} (D^\sigma_{x_1} \nu_z)(x_1,x_2)
	=t^{-\sigma} D^\sigma_{x_1} (\Sy^1_z)^{-1}\nu_z(x_1,x_2)\]
for any $\nu_z \in \Psi^{k,\delta;1,1}_z$ and applying the previous estimate to $(\Sy^1_z\Sy^2_z)^{-1}\nu_z(\cdot,x_2)$ to get that the decay is either
	\[ \max\{1,|x_1|,|x_2|\}^{-(\sigma+d)}(1+|x_2|)^{-[(k-\sigma)+d+\delta]}. \]
or \[ \max\{1,|x_1|,|x_2|\}^{-M+d}(1+|x_2|)^{-d}, \] with $M=k+2d+\delta$, whichever is worse.
\end{proof}

\section{Sparse Bounds for Intrinsic Forms}\label{sec:sparse}

\begin{proposition}\label{prop:sparse} Let $\MSS$, $\pi_b$, $\MSSsig$, $\pi_b^\sigma$ be defined  by \eqref{eq:mss}, \eqref{eq:pi}, \eqref{eq:msssig}, and \eqref{eq:pi-sig}. Let $b\in \mathrm{BMO}$.
\begin{itemize}\item[\textrm{1}.]The forms $\MSS$ and $\pi_b$ have sparse $(1,1,1)$ bounds.
\item[\textrm{2}.] The forms $\MSSsig$ and $\pi_b^\sigma$ have sparse $(1,p_2,p_3)$ bounds for any $1 \le p_2,p_3 \le \infty$ with $\frac{1}{p_2}+\frac{1}{p_3} < \frac {\sigma+d} d$.
\end{itemize}
\end{proposition}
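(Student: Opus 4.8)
The plan is to treat all four forms by one stopping-time recursion; the distinctions are only (i) the decay rate of the relevant wavelet coefficients --- a fixed positive rate for $\MSS$, $\pi_b$ but a rate governed by $\sigma$ for $\MSSsig$, $\pi_b^\sigma$, which is exactly what pins down the admissible exponents --- and (ii) the extra $\Psi^{\cals_0}_z(b)$ factor in the two paraproduct forms, absorbed by a Carleson estimate. One may assume $f,g,h\ge 0$. First I would discretize: cutting $\int_0^\infty\frac{\d t}{t}$ into dyadic blocks $t\in[2^{j},2^{j+1})$ and, for each $j$, tiling $\R^d$ by cubes of side $2^{j}$, every form is dominated by a sum $\sum_{Q\in\D}|Q|\,\mathsf a_1(Q)\mathsf a_2(Q)\mathsf a_3(Q)$ over a dyadic lattice $\D$ (with an extra factor $\mathsf a_b(Q)$ for $\pi_b$, $\pi_b^\sigma$), where $\mathsf a_i(Q)$ is the supremum of the corresponding wavelet pairing over $z=(w,t)$ with $w\in Q$, $t\sim\ell(Q)$; by the usual translated-lattice argument it suffices to obtain a sparse bound for this discrete sum on a single dyadic lattice.

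The second ingredient is pointwise control of the local coefficients by dilated averages. Undoing the $\Sy_z$-action and summing annular contributions against the decay of the wavelet class gives, for $\MSS$ and some $\eta\in(0,\delta)$,
\[ \mathsf a_{f,g}(Q)\lesssim\sum_{m\ge0}2^{-m\eta}\langle f\rangle_{1,2^mQ}\langle g\rangle_{1,2^mQ},\qquad \Psi^{\cals_0}_z(h)\lesssim\sum_{m\ge0}2^{-m\eta}\langle h\rangle_{1,2^mQ}, \]
the key point being that the norm defining $\Psi^{0,\delta;1,0}$ is \emph{stronger} than a tensor product of the scalar norms, so a single dilation parameter $m$ suffices; for $\MSSsig$ the same computation, using instead the asymmetric decay $\langle(x_1,x_2)\rangle^{-(d+\sigma)}(1+|x_2|)^{-(d+k+\delta-\sigma)}$ of $\Psi_z^{\sigma,k,\delta;1,0}$ from Lemma~\ref{lemma:frac-wave} and a H\"older step on each annulus to produce the $L^{p_2}$, $L^{p_3}$ averages, yields a bound whose tail decays only like $2^{-m\sigma}$. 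Finally, the discrete Carleson estimate $\sum_{Q\in\D:\,Q\subset Q_0}|Q|\,|\mathsf a_b(Q)|^2\lesssim\|b\|_{\BMO}^2|Q_0|$ --- the square-function / John--Nirenberg characterization of $\BMO$ for the fast-decaying $\Psi^{\cals_0}$ wavelets --- handles the paraproduct factor.

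The recursion is then standard. Fix a huge cube $Q_0$ containing the supports of $f,g,h$ and select recursively the maximal dyadic $Q'\subsetneq Q$ on which one of the three relevant averages --- the $L^{p_i}$ average of the $i$-th argument, with $(p_1,p_2,p_3)=(1,1,1)$ in part~1 and $(1,p_2,p_3)$ in part~2 --- jumps by a fixed large factor; the resulting family $\Q$ is sparse. For $Q\in\Q$ one estimates the tree $\sum_R|R|\,\mathsf a_1(R)\mathsf a_2(R)\mathsf a_3(R)$ over all dyadic $R\subset Q$ not contained in a selected descendant: the dilates $2^mR$ staying inside $Q$ are controlled by the averages over $Q$ through the stopping rule, while the dilates escaping $Q$ must be summed against the geometric weights $2^{-m\eta}$ (resp.\ $2^{-m\sigma}$) and the measures $|R|$ --- that is, one is reduced to the boundedness, from $L^{p_2}\times L^{p_3}$ into $L^{p}$ with $\frac1p=\frac1{p_2}+\frac1{p_3}$, of the multi-sublinear (and, in part~2, fractional) maximal operator assembled from the weights $2^{-m\sigma}$ and the averages $\langle\cdot\rangle_{p_2,2^mQ}$, $\langle\cdot\rangle_{p_3,2^mQ}$, and \emph{this} is precisely where the threshold $\frac1{p_2}+\frac1{p_3}<\frac{\sigma+d}{d}$ is forced. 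The resulting tree bound --- $\sum_R|R|\,\mathsf a_1(R)\mathsf a_2(R)\mathsf a_3(R)\lesssim|Q|$ times the product of the $L^{p_i}$ averages over $Q$ of the three arguments, with an extra $\|b\|_{\BMO}$ for the paraproducts via Cauchy--Schwarz against the Carleson estimate inside the tree --- is summed over $Q\in\Q$, yielding the sparse $(1,1,1)$ bound for $\MSS$, $\pi_b$ and the sparse $(1,p_2,p_3)$ bound, in the stated range, for $\MSSsig$, $\pi_b^\sigma$.

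I expect the tail bookkeeping in part~2 to be the main obstacle: since $\nu^\sigma_z$ decays only like $\langle x_1\rangle^{-(d+\sigma)}$ in its first variable --- genuinely slower than a Calder\'on--Zygmund kernel when $\sigma<d$ --- the naive $(1,1,1)$ domination is false, and the escaping-tails part of the tree estimate has to be pushed through a fractional multi-sublinear maximal function whose $L^{p_2}\times L^{p_3}\to L^p$ boundedness degenerates exactly at $\frac1{p_2}+\frac1{p_3}=\frac{\sigma+d}{d}$. Verifying that the H\"older splitting on each annulus is admissible for the class $\Psi_z^{\sigma,k,\delta;1,0}$, and that the re-summation over $\Q$ converges at this threshold, is the delicate part; in part~1 any rate $\eta\in(0,\delta)$ beats the volume growth of the dilated cubes outright, so the escaping tails are harmless and the scheme is entirely routine.
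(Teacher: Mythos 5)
Your plan is a genuinely different route from the paper's proof, although both are recursive sparse-selection arguments in spirit. The paper never discretizes the form into a dyadic lattice. It works directly with the continuous pairing: it fixes a large cube $Q_0$, forms the sub-trilinear maximal function $M(f_1,f_2,f_3)(w)=\sup_{t>0}\prod_i\Psi^{0,\delta;1}_{w,t}(f_i)$, takes a Whitney cover $\mathcal E$ of the sublevel set $E$ where $M$ exceeds $C\prod\langle f_i\rangle_{3Q_0}$, and decomposes $\A=\A_{T(E)}+\A_{T(E)^c}$ with $T(E)=\bigcup_{Q\in\mathcal E}T(Q)$ the union of Carleson boxes. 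Over $T(E)$ it splits each argument into an ``in'' piece $f_i\ind_{3Q}$ (fed to the recursion) and an ``out'' piece $f_i\ind_{(3Q)^c}$ (killed by Lemma~\ref{lemma:max}). Over $T(E)^c$ it runs a Calder\'on--Zygmund decomposition $f_j=g_j+b_j$ at level $\langle f_j\rangle_{3Q_0}$ against the family $\mathcal E$, using $L^p$ boundedness for the good parts and the decay lemmas for the bad parts. By contrast, you propose to discretize first, stop on the ratio of averages, and then estimate the resulting tree sum by reducing the escaping tails to the boundedness of a (fractional, in part~2) multi-sublinear maximal operator. This is a legitimate alternative in the spirit of Lerner's oscillation approach, but it is not how the paper proceeds.

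There is one point on which your mechanism, as sketched, does not obviously deliver the sharp threshold in part 2 the way the paper does, and I'd flag it as a real gap rather than just bookkeeping. The paper's exponent condition $\frac1{p_2}+\frac1{p_3}<\frac{\sigma+d}{d}$ is extracted at a very specific place: for the ``one-out, two-in'' configuration on a Carleson box $T(Q)$, the paper applies Young's inequality \emph{in the translation variable} $w$, using $\|\theta_{w,t}\|_{L^q}\lesssim t^{d(1/q-1)}$ together with $\left(\int_Q|\langle g_i,\theta_{w,t}\rangle|^{r_i}\,\d w\right)^{1/r_i}\le\|g_i\|_{L^{p_i}}\|\chi_t\|_{L^{q_i}}$, and the time-integral $\int_0^{\ell(Q)}t^{\sigma+d(1-1/p_2-1/p_3)}\frac{\d t}{t}$ converges exactly when $\sigma+d(1-1/p_2-1/p_3)>0$. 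In your scheme the exponents are supposed to emerge from the $L^{p_2}\times L^{p_3}\to L^p$ boundedness of a fractional maximal operator built from the escaping-tail weights $2^{-m\sigma}$, but the weak decay of $\nu_z^\sigma$ is in the \emph{first} (noncancellative) slot only, while the slots carrying exponents $p_2,p_3$ are precisely the other two; transporting the loss from the $h$-slot over to the $(f,g)$-pair is exactly what the paper's Young-in-$w$ step accomplishes, and your plan doesn't say how this re-distribution happens in the discrete tree estimate. Without that, the naive tail sum in part 2 either fails to converge or lands you at the wrong exponent constraint. I'd want to see that step made precise before accepting the plan as a proof.

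A smaller remark: your claimed annular bound $\mathsf a_{f,g}(Q)\lesssim\sum_m2^{-m\eta}\langle f\rangle_{1,2^mQ}\langle g\rangle_{1,2^mQ}$ conflates the two annular indices; since the decay of $\nu_z$ is governed by $\max\{|x_1-w|,|x_2-w|\}$, the correct double sum is $\sum_{m,m'}2^{-\max(m,m')\delta}2^{-|m-m'|d}\langle f\rangle_{1,2^mQ}\langle g\rangle_{1,2^{m'}Q}$, which does not collapse to a single-index sum without extra input (the stopping rule supplies it, but you should say so). In part~1 this is harmless; in part~2 this compression is again exactly where the sharp range would be gained or lost.
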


Before beginning the proof, we collect some estimates in the following simple lemma. 
\begin{lemma}\label{lemma:max}
Let $\eta \ge \delta>0$, and $Q \subset \R^d$ be a cube with center $c(Q)$ side length $\ell(Q)$. For all $z=(w,t) \in Z^d$ and $\|\Sy^{-1}_z\theta_z\|_{\star,\eta,\delta} \le 1$, if $w \not\in 3Q$ then
	\begin{equation}\label{eq:large-far} \left| \lip f1_Q , \theta_{w,t}\rip \right| \lesssim \l f \r_Q 
			 \frac{\ell(Q)^dt^\eta}{\max\{t,|w-c(Q)|\}^{d+\eta}}. \end{equation}
If in addition $f$ has mean zero, then
	\begin{equation}\label{eq:cancel}  \left|\l f1_Q,\theta_{w,t} \r \right| \lesssim \l f \r_Q \frac{\ell(Q)^{d+\delta}t^{\eta-\delta}}{\max\{t,|w-c(Q)|\}^{d+\eta}}. \end{equation}
On the other hand, if $w \in Q$, then
	\begin{align}\label{eq:out} \left| \lip f 1_{(3Q)^c} ,\theta_{z} \rip \right| &\lesssim  
			\left(\frac{t}{\ell(Q)} \right)^{\eta} \inf_{u \in Q} M(f)(u).  
	\end{align}
\end{lemma}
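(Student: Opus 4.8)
The plan is to deduce all three estimates from the pointwise size and Hölder information already built into the normalization $\|\Sy^{-1}_z\theta_z\|_{\star,\eta,\delta}\le 1$. Unpacking \eqref{eq:star-norm} and using $\l y\r\sim\max\{1,|y|\}$, for $z=(w,t)$ one records
\[
|\theta_{w,t}(x)|\lesssim \frac{t^\eta}{\max\{t,|x-w|\}^{d+\eta}},
\qquad
|\theta_{w,t}(x)-\theta_{w,t}(x')|\lesssim \frac{t^{\eta-\delta}\,|x-x'|^\delta}{\max\{t,|x-w|\}^{d+\eta}}\quad(|x-x'|\le t),
\]
which are exactly the bounds already exploited in the discussion after Definition \ref{def:psi}. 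From here everything is a matter of integrating against $f$ and doing a few elementary metric comparisons under the relevant position hypothesis; I do not expect a genuine obstacle, only bookkeeping.

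For \eqref{eq:large-far} I would argue as follows. Since $w\notin 3Q$, every $x\in Q$ satisfies $|x-w|\gtrsim\max\{\ell(Q),|w-c(Q)|\}$, hence $\max\{t,|x-w|\}\gtrsim\max\{t,|w-c(Q)|\}$, and the size bound gives
\[
|\l f1_Q,\theta_{w,t}\r|\le\int_Q|f|\,|\theta_{w,t}|\lesssim\frac{t^\eta}{\max\{t,|w-c(Q)|\}^{d+\eta}}\int_Q|f|\sim\l f\r_Q\,\frac{\ell(Q)^d\,t^\eta}{\max\{t,|w-c(Q)|\}^{d+\eta}},
\]
where $\l f\r_Q=|Q|^{-1}\int_Q|f|$ and $|Q|\sim\ell(Q)^d$.

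For \eqref{eq:cancel} the plan is to split on the size of $t$ relative to $\ell(Q)$. When $t\gtrsim\ell(Q)$ (with a dimensional implied constant), I would subtract the constant $\theta_{w,t}(c(Q))$, use the vanishing mean $\int_Q f=0$ to write $\l f1_Q,\theta_{w,t}\r=\int_Q f(x)\big(\theta_{w,t}(x)-\theta_{w,t}(c(Q))\big)\,dx$, and apply the Hölder bound with $|x-c(Q)|\lesssim\ell(Q)\le t$; this gains precisely a factor $(\ell(Q)/t)^\delta$ over \eqref{eq:large-far}, producing the claimed estimate after the same integration. When $t\lesssim\ell(Q)$ one has $(\ell(Q)/t)^\delta\gtrsim 1$ (here $\delta\le\eta$, in fact $\delta>0$ suffices), so the right-hand side of \eqref{eq:cancel} already majorizes that of \eqref{eq:large-far} and no cancellation is needed. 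The only delicate point is tracking the dimensional constant that decides this dichotomy, so that $|x-c(Q)|\le t$ indeed holds in the first case.

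For \eqref{eq:out}, now $w\in Q$, so for $x\notin 3Q$ one has $|x-w|\gtrsim|x-c(Q)|\gtrsim\ell(Q)$, and therefore
\[
|\l f1_{(3Q)^c},\theta_{w,t}\r|\lesssim t^\eta\int_{(3Q)^c}\frac{|f(x)|}{|x-c(Q)|^{d+\eta}}\,dx.
\]
I would then decompose $(3Q)^c$ into the annuli $3^{k+1}Q\setminus 3^kQ$ for $k\ge1$: on each of them $|x-c(Q)|\sim 3^k\ell(Q)$, while $\int_{3^{k+1}Q}|f|\le|3^{k+1}Q|\,\l|f|\r_{3^{k+1}Q}\lesssim(3^k\ell(Q))^d\inf_{u\in Q}M(f)(u)$ because $Q\subset 3^{k+1}Q$ (for the centered maximal function this costs only a dimensional factor). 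Summing the convergent geometric series $\sum_{k\ge1}3^{-k\eta}$ then yields the bound $(t/\ell(Q))^\eta\inf_{u\in Q}M(f)(u)$. As with the other two parts, the only real work is the elementary geometry, and I foresee no substantive difficulty.
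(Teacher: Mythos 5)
Your proof is correct and follows essentially the same route as the paper: pointwise size/H\"older bounds on $\theta_{w,t}$ for \eqref{eq:large-far}, subtracting $\theta_{w,t}(c(Q))$ using the mean-zero hypothesis for \eqref{eq:cancel}, and a dyadic annular decomposition of $(3Q)^c$ for \eqref{eq:out}. Your explicit case split on $t$ versus $\ell(Q)$ in \eqref{eq:cancel} is a sensible refinement, since the H\"older bound in \eqref{eq:star-norm} is stated only for increments $|h|\le 1$ (i.e.\ $|x-c(Q)|\le t$ after rescaling), a point the paper's proof glosses over.
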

\begin{proof}
We begin with (\ref{eq:large-far}). 
If $x \in Q$ and $w \not\in 3Q$, then $|x-w| \ge \frac 12 |w-c(Q)|$. 
Since $\theta_{w,t}(x) \le t^{-d}(1+\frac{|x-w|}{t})^{-(d+\eta)}$, this implies
	\[ \int_{Q} |f(x) \theta_{w,t}(x)| \, \d x \lesssim \l f \r_{Q} |Q| \frac{t^\eta}{\max\{t,|w-c(Q)|\}^{d+\eta}}. \] 
If $f$ has mean zero, then
	\[ |\l f1_Q,\theta_{w,t} \r| = \left|\int_Q f(x)[\theta_{w,t}(x)-\theta_{w,t}(c(Q))] \, \d x \right| \]
	\[\lesssim \ell(Q)^\delta \int_Q |f(x)|t^{-d-\delta}\left(1+\frac{|c(Q)-w|}{t}\right)^{-(d+\eta)} \, \d x \le \l f \r_{Q} \frac{\ell(Q)^{d+\delta}t^{\eta-\delta} }{\max\{t,|w-c(Q)|\}^{d+\eta}}. \]

For (\ref{eq:out}), decompose the integral into dyadic annuli $A_j = B(u,2^{j+1}\ell(Q)) \backslash B(u,2^j\ell(Q))$ for any $u \in Q$. We can skip the ball $B(u,\ell(Q))$ since $B(u,\ell(Q)) \subset 3Q$.
Therefore,
	\[\begin{aligned} |\lip f, \theta_{w,t} \rip| &\le \sum_{j=0}^{\infty} \int_{A_j} |f(x) \theta_{w,t}(x)|  \, \d x
			\lesssim \sum_{j=0}^\infty |A_j| \l f \r_{1,A_j} t^\eta (2^j\ell(Q))^{-(d+\eta)}
			\lesssim \left( \frac{t}{\ell(Q)}\right)^\eta Mf(u). \end{aligned}\]
This completes the proof of the lemma.\end{proof}

There are only two points at which we will need to distinguish among the four forms, so we will use $\A$ to represent any one of them except at these two crucial points. The key property of $\A$---the shared property of $\MSS$, $\pi_b$, $\MSSsig$, and $\pi_b^\sigma$---is the bound
	\begin{equation}\label{eq:a-form} \left| \A(f_1,f_2,f_3) \right| \lesssim \int_{Z^d} \Psi^{0,\delta;1}_z(f_1)\Psi^{0,\delta;1}_z(f_2)\Psi^{0,\delta;1}_z(f_3) \, \d \mu(z), \quad \Psi_z^{0,\delta;1}(f) := \sup_{\theta_z \in \Psi_z^{0,\delta;1}} | \l f ,\theta_z \r|. \end{equation}
All four forms also satisfy the bound $\A(f_1,f_2,f_3) \lesssim \prod_{i=1}^3 \|f_i\|_{q_i}$ where $1 < q_i < \infty$ and $\sum q_i^{-1} = 1$, though for different reasons. This is the first point at which we consider each form separately, since the bound in (\ref{eq:a-form}) above is not $L^p$ bounded. For $\Pi$ and $\Pi^\sigma$, they can be bounded by $\|Mf_1\|_{L^{q_1}} \|Sf_2\|_{L^{q_2}} \|Sf_3\|_{L^{q_3}}$ where $M$ and $S$ are modified maximal and intrinsic square functions for which the \textit{linear} $L^p$-mapping properties are well-known \cite{diplinio20}. For the paraproducts, it is a little more complicated, yet still within the realm of the standard linear theory, so we do not prove it here.

\begin{proof}[Proof of Proposition \ref{prop:sparse}] {
 In the course of this proof, the following additional notations will find use. First, to unify, the tuple  $\vec r=(r_1,r_2,r_3)$ is set as follows:
\[\vec  r\coloneqq 
\begin{cases}
(1,1,1) & \A \in\{\MSSsig, \pi_b^\sigma\}, \\
(1,p_2,p_3) & \A \in\{\MSS  \pi_b\}.
\end{cases} \]There is no loss in generality with the assumption  $1\leq p_2,p_3 \le 2$ since $\frac{\sigma+d}{d} >1$ and $\vec p$ sparse bounds imply $\vec p'$ sparse bounds for any $\vec p \le \vec p'$.
  Let  $\mathcal D$ be the standard dyadic system on $\mathbb R^d$. For a cube $Q \in \mathcal D$, $T(Q)$ is the Carleson box $T(Q) = (0,\ell(Q)] \times Q \subset (0,\infty) \times \R^d$.  If $\mathcal E\subset \mathcal D$ is a pairwise disjoint cover of $E\subset \mathbb R^d$.  take $T(E)\
\coloneqq \cup\{T(Q): Q \in \mathcal E\} $. Finally, if $F\subset (0,\infty) \times \R^d $. the truncated operators $\A_F$ are defined for any $F \subset Z^d$ by integrating over only $F$ in (\ref{eq:a-form}).

 Our task is to prove \eqref{e:sparsegen} holds for all triples $f_j\in L^\infty(\R^d)$ with compact support. Fix such a triple and take $Q_0\in \mathcal D$ with the property that $\supp\, f_j\subset 3Q_0$ for all $j=1,2,3$. The proof is iterative in nature, and we begin   the main step of the iteration by defining an exceptional subset. Let
\[
E\coloneqq \bigcup_{j=1}^3 \left\{x\in \R^d : \mathrm{M}_{r_j} f_j (x) > C \l f_j \r_{r_j,3Q_0} \right\}.
\]
  For $C$ large, by the maximal inequality, $|E| \le 2^{-4d} |Q_0|$. 
Let now $\mathcal E$ be the maximal elements of the collection $\{Q\in \mathcal D: 9Q \subset E\}$. Clearly $\mathcal E$ is a pairwise disjoint cover of $E$. Moreover, the stopping nature of $Q\in \mathcal E$ yields the property
\[\inf_{Q} \mathrm{M}_{r_j} f_j \lesssim  \l f_j \r_{r_j,3Q_0} 
\]
uniformly over $Q\in \mathcal E$ and $j=1,2,3$. This property will be  tacitly used  throughout the proof.
We use $E$ to induce the decomposition $\A = \A_{T(E)^c} + \A_{T(E)}$, whose terms we estimate separately.} Let us begin with $\A_{T(E)}$. Break this up as
	\[ \A_{T(E)}(f_1,f_2,f_3) = \sum_{Q \in \mathcal E} \sum_{\vec g} \A_{T(Q)}(g_1,g_2,g_3) \]
where each $\vec g = (g_1,g_2,g_3)$ runs over $2^3$ possibilities where each $g_j$ is either $f_j1_{3Q}$ (in) or $f_j1_{{(3Q)^c}}$ (out).
We leave alone the term consisting entirely of ``in'' functions. ``Out'' functions are good so let us assume $g_1$ is out and the others are in. This is the second point at which we distinguish among the four forms for $\A$. 
For $\MSS$ and $\pi_b$,  $\vec r=(1,1,1).$ We obtain, applying (\ref{eq:out}) to $g_1$ with $\eta=d+\delta$,
	\[ \int_{T(Q)} \prod_{j=1}^3 |\l g_j,\theta_{w,t} \r| \dfrac{\d w \, \d t}{t} \lesssim \int_0^{\ell(Q)} \frac{t^{d+\delta}}{\ell(Q)^{d+\delta}} \inf_{u \in Q} \M {{g_1}(u)} \l |f_2|, \int_Q |\theta_{w,t}| \d w \r   \|f_3\|_{L^1}t^{-d} \dfrac{\d t}{t} \]
	\[\lesssim |Q|\inf_{Q} \M(1_{{{(3Q)^c}}}f_1)\l f_2\r_{3Q} \l f_3\r_{3Q}\]
where we used the fact that $\int_Q |\theta_{w,t}(x)| \, \d w \le \|\theta_{x,t}\|_{L^1} \lesssim 1$.
Therefore, summing over $Q \in \mathcal E$,
	\[ \sum_{Q \in \mathcal E}   |Q|\inf_Q \M(1_{{{(3Q)^c}}}f_1)\l f_2\r_{1,3Q} \l f_3\r_{1,3Q} \lesssim \prod_{j=1}^3  |Q_0| \l f_j \r_{r_j,3Q_0}. \]
One can verify the same result for two out functions, and when all three are out, use the improved decay to obtain
	\[ \int_{T(Q)} \frac{t^{2(d+\delta)}}{\ell(Q)^{2(d+\delta)}} \prod_{j=1}^3  \inf_Q \M g_j \dfrac{\d w \, \d t}{t} \lesssim |Q| \prod_{j=1}^3  \l f_j \r_{r_j, 3Q_0}. \]
For $\MSSsig$ and $\pi_b^\sigma$, consider the case where there is only one out function and it is in the first position; this is the worst case  as the other cases actually result in the same situation above with the power $d+\delta$.  The second wavelet in $\MSSsig$ has decay greater than $d+\delta$: cf. (\ref{eq:psi-sigma}) since $k-\sigma>0$. Thus for any $q\ge 1$, setting $\chi_t(x) = t^{-d}(1+\frac{|x|}t)^{d+\delta}$, $\|\theta_{w,t}\|_{L^q} \le \|\chi_t\|_{L^q} \lesssim t^{d(1/q-1)}$. 
{ For $j=2,3$ set
\[
z_j={\left(\textstyle \frac{1}{p_2}+\frac{1}{p_3}\right)p_j \ge p_j}, \qquad q_j\coloneqq \left(\textstyle \frac{1}{z_j}+1-\frac{1}{p_j} \right)^{-1} \geq 1\] and apply Young's inequality to obtain
	\[ \left(\int_Q |\l g_j, \theta_{w,t} \r|^{z_j} \, \d w\right)^{1/r_j} \le \||g_j| * \chi_t\|_{L^{r_j}} \le \|g_j\|_{L^{p_j}}\|\chi_t\|_{L^{q_j}} \lesssim \|g_j\|_{L^{p_j}} t^{d(1/q_j-1)}\] Applying H\"older's inequality with exponents $z_2$ and $z_3$
along with the above estimate gives
	\[ \int_Q |\lip g_2,\theta_{w,t} \r ||\l g_3,\theta_{w,t} \r |\, \d w 
	\lesssim t^{d(1-1/p_2-1/p_3)}\prod_{j=2}^3 \|g_j\|_{L^{p_j}(Q)}.\]
}
Now we use the fact that $\frac{1}{p_2}+\frac{1}{p_3} < \frac{\sigma+d}{d}$. This implies $\sigma + d(1-\frac{1}{p_2}-\frac{1}{p_3}) > 0$ so that
	\[ \int_{T(Q)} \prod_{j=1}^3 |\l g_i,\theta_{w,t} \r | \dfrac{\d w \, \d t}{t} \lesssim \int_0^{\ell(Q)} \frac{t^{\sigma}}{\ell(Q)^{\sigma}} \inf_{u \in Q} \M(1_{{{(3Q)^c}}}g_1)(u) t^{d(1-1/p_2-1/p_3)}\|g_2\|_{L^{p_2}} \|g_3\|_{L^{p_3}} \dfrac{\d t}{t} \]	
	\[ \lesssim \ell(Q)^{d(1-1/p_2-1/p_3)} \left(\inf_{Q} \M(1_{{{(3Q)^c}}}g_1 \right) \|g_2\|_{L^{p_2}} \|g_3\|_{L^{p_3}} = |Q| \left(\inf_{Q} \M g_1 \right)\l g_2 \r_{p_2,Q} \l g_3 \r_{p_3,Q}. \]
Up until now, we have shown
	\[ \begin{aligned} \A(f_11_{3Q_0},f_21_{3Q_0},f_31_{3Q_0}) &\lesssim \sum_{Q \in \mathcal E} \A_{T(Q)}(f_11_{3Q},f_21_{3Q},f_31_{3Q}) + |Q_0| \prod_{j=1}^3 \l f_j \r_{r_j,3Q_0} \\&\quad \quad \quad + \A_{T(E)^c}(f_11_{3Q_0},f_21_{3Q_0},f_31_{3Q_0})  \end{aligned} \]
We create the sparse collection by applying the same argument to each $Q \in \mathcal E$ as if it were $Q_0$; see \cite{conde-alonso17} for details. Iterating this, we will be done once we show 
	\begin{equation}\label{eq:remainder} \A_{T(E)^c}(f_11_{3Q_0},f_21_{3Q_0},f_31_{3Q_0}) \lesssim |Q_0|\prod_{j=1}^3 \l f_j \r_{r_j,3Q_0}. \end{equation}
Perform a Calder\'on-Zygmund decomposition of each $f_j = g_j + b_j = g_j + \sum_{Q \in \mathcal E} b_j^Q$ with respect to the collection of cubes $\mathcal E$ and at the level $ \l f_j \r_{r_j,3Q_0}$. The good functions $g_j$ are estimated using the ${{L^3 \times L^3  \times L^3  }}$ boundedness,
	\[ \A_{T(E)^c} (g_1,g_2,g_3) \le \A(g_1,g_2,g_3) \lesssim |Q_0| \prod_{j=1}^3 \l f_j \r_{r_j,3Q_0}. \]
The remaining terms all have at least one bad term. Let us say it is in the first argument. The functions in the other two arguments will be estimated using
	\begin{equation}\label{eq:gb}| \lip g_j, \theta_{w,t} \rip|, \, | \lip b_j,\theta_{w,t} \rip| \lesssim  \l f_j \r_{r_j,3Q_0} \quad (w,t) \not\in T(E). \end{equation}
For the good functions, (\ref{eq:gb}) is an obvious consequence of $\|g_j\|_\infty \le \l f_j \r_{r_j,3Q_0}$. The bad one requires some work. Decompose the sum into two regions: 
	\[ I = \{ Q \in \mathcal E : w \not\in 3Q \}, \quad II = \{Q \in \mathcal E : 9t > \ell(Q), \, w \in 3Q\}.\]
Since $(w,t) \not\in T(E)$, we claim $\mathcal E = I \cup II$. Indeed, for each $Q \in \mathcal E$, if $w \in 3Q$, then $w \in Q'$ for some $Q' \in \mathcal E$ with $\ell(Q') \ge \frac 19 \ell(Q)$. Therefore $9t \ge \ell(Q)$. Considering $II$ first,
	\[ \sum_{Q \in II} |\lip b_j^Q,\theta_{w,t} \rip| \le \sum_{Q \in II} \frac{|Q|}{t^d} \l b_i^Q \r_{Q} \le \l f_j \r_{r_j,3Q_0} t^{-d} \sum_{Q \in II} |Q|.\]
But the cubes are disjoint and contained in the cube centered at $w$ with side length $18t$. This means $\sum_{Q \in II}|Q| \lesssim t^d$. For $I$, the estimate immediately follows from Lemma \ref{lemma:max} if we can establish
	\begin{equation}\label{eq:cubes} \sum_{Q \in I } \frac{|Q| \min\{\ell(Q),t\}^{\delta} }{|w-c(Q)|^{d+\delta}} \lesssim 1. \end{equation}
Let us now complete the proof, postponing (\ref{eq:cubes}) until the end. For the same reason that $\mathcal E = I \cup II$ above, 
	\[ T(E)^c \subset \{ w\not\in 3Q\} \cup \{9t \ge \ell(Q)\}=:T^*(Q)^c\]
for any $Q \in \mathcal E$. Let $h_j$ be either $g_j$ or $b_j$ so that by (\ref{eq:gb}) $|\l h_j,\theta_z \r| \lesssim \l f_j \r_{3Q_0}$ for $z \in T(E)^c$. Using the first two statements from Lemma \ref{lemma:max},
	\[ \begin{aligned} &\quad \A_{T(E)^c}(b_1,h_2,h_3) \lesssim \sum_{Q \in \mathcal E} \int_{T(E)^c} |\l b_1^Q,\theta_{z} \r \l h_2,\theta_{z} \r \l h_3,\theta_z \r| \d \mu(z) \\
		&\lesssim \l f_2 \r_{r_2,3Q_0} \l f_3 \r_{r_3,3Q_0} \sum_{Q \in \mathcal E} \l b_i^Q \r_{Q}|Q| \left(\int_{\frac 19 \ell(Q)}^\infty \int_{3Q} t^{-d} \frac{\d w\, \d t}{t} + \int_{T^*(Q)^c} \frac{\min\{t,\ell(Q)\}^\delta}{\max\{t,|w-c(Q)|\}^{d+\delta}} \dfrac{ \d w \, \d t}{t} \right)\\
		&\lesssim  \l f_1 \r_{r_1,3Q_0} \l f_2 \r_{r_2,3Q_0} \l f_3 \r_{r_3,3Q_0} \sum_{Q \in \mathcal E} |Q|  \lesssim |Q_0| \prod_{i=1}^3 \l f_i \r_{3Q_0}.
	\end{aligned} \]
In the third inequality, we used the fact that for any $\delta>0$, and $Q$ cube,
	\[ \begin{aligned} &\quad\int_{T^*(Q)^c} \frac{\min\{\ell(Q),t\}^\delta}{\max\{t,|w-c(Q)|\}^{d+\delta}} \dfrac{\d w \, \d t}{t}  \lesssim \int_{\frac 19 \ell(Q)}^{\infty} \int\limits_{|w-c(Q)| \le t} \frac{\ell(Q)^\delta}{t^{d+\delta}} \frac{\d w \, \d t}{t}\\
	& + \int_{\frac 19 \ell(Q)}^{\infty} \int\limits_{\{|w-c(Q)| > t\}}\frac{\ell(Q)^\delta}{|w-c(Q)|^{d+\delta}} + \int_{0}^{\frac 19 \ell(Q)} \int_{(3Q)^c} \frac{t^\delta}{|w-c(Q)|^{d+\delta}} \dfrac{\d w \, \d t}{t} \\
	& \lesssim \int_{\ell(Q)}^{\infty} \frac{\ell(Q)^\delta}{t^{\delta}} + \int_{0}^{\ell(Q)} \frac{t^\delta}{\ell(Q)^{\delta}} \dfrac{\d t}{t}  \lesssim 1. \end{aligned} \]
This is the continuous version of (\ref{eq:cubes}) so it is established.
\end{proof}

\section{General Cases}\label{sec:asym}
The results and arguments can be almost immediately extended to $m$-linear operators and the associated $(m+1)$-linear forms. At the same time, we would like to generalize to kernels which have varying degrees of smoothness in each variable.
This second generalization is motivated by the fact that the assumptions in the first representation theorem were symmetric in $\Lambda$ and both its adjoints. However, in the Sobolev mapping theorem, we saw that the conditions were asymmetric, and in fact some of the estimates on the adjoint terms were a bit too good. So, we give a representation theorem which is asymmetric and allows us to prove the Sobolev result under weaker assumptions. We must slightly alter the definitions above.

\subsection{Singular Integrals} Let $\vec 1_{d}=(1,\ldots, 1)\in \R^d$. 
Given $\vl = (\ell_0,\ell_1,\ldots,\ell_{m}) \in \N^{m+1}$, a function $K\in L^1_{\mathrm{loc}}( \R^{(m+1)d}\setminus \R\cic{1}_{(m+1)d})$ is an $(\vl,\delta)$ SI (singular integral) kernel if there exist $C,\delta>0$ such that for all $0 \le |\kappa| \le \ell_i$, 
	\[ |\nabla_{x_i}^\kappa K(x_0,x_1,\ldots,x_m)| \le \dfrac{C}{(\sum_{j \ne i}|x_i-x_j|)^{md+|\kappa|}} \]
	\[ |\nabla_{x_i}^\kappa \Delta^i_h K(x_0,x_1,\ldots,x_m)| \le \dfrac{C|h|^\delta}{(\sum_{j \ne i}|x_i-x_j|)^{md+|\kappa|+\delta}}.\]
We say $\Lambda$ is an $(\vl,\delta)$ $(m+1)$-linear SI form if
	\[ \int_{(\R^d)^{m+1}} K(x_0,x_1,\ldots,x_m) \prod_{j=0}^m f_j(x_j) \, \d x = \Lambda(\vec f) \]
for all $\vec f = (f_0,f_1,\ldots,f_m) \in\cals^{m+1}$ with $\cap_{i=0}^m \supp f_i = \varnothing$ and an $(\vl,\delta)$ SI kernel $K$. Notice that an $(\vl,\delta)$ SI form is an $(\vl',\delta')$ form for any $\vl' \le \vl$ and $\delta' \le \delta$.

\subsection{Calder\'on-Zygmund Forms}
It is useful at this point to define the adjoints of an $(m+1)$-linear form. For each $i=0,1,\ldots,m$ 
	\[ \Lambda^{i*}(\vec f) = \Lambda(f_i,f_1,f_2,\ldots,f_{i-1},f_0,f_{i+1} \ldots, f_m). \]
In other words, $\Lambda^{i*}$ permutes $f_0$ and $f_i$ and it is clear that $\Lambda^{0*}=\Lambda$.

\subsubsection{Paraproducts}
We say $\Lambda$ has $\vec 0$-th order paraproducts if for each $i=0,1,\ldots,m$, there exists $b_0^i$ in BMO such that
	\[ \Lambda^{i*}(\psi,1,1,\ldots,1) = \lip b_0^i,\psi \r \]
for all $\psi \in \cals_0$. For $\vec j = (j_0,j_1,\ldots,j_m) \in (\N^{m})^{m+1}$, we define the $\vec j$-th order paraproducts inductively. We again use the paraproduct forms, now defined for any $\gamma =(\gamma_1,\ldots,\gamma_m) \in (\mathbb N^{d})^m$ and $b \in \BMO$ by
	\[ \Pi_{b,\gamma}(\vec f) = \int_{Z^d} \l b, (\partial^{-(\gamma_1 + \cdots + \gamma_m)}\phi)_{z}\r \prod_{i=1}^m \l f_i, \varu_z^{\gamma_i} \r \l f_0, \phi_z \r \, \d \mu(z). \]
Suppose for each $i=0,1,\ldots,m$, $\Lambda$ has paraproducts $b_\gamma^{i}$ for all $(|\gamma_1|,\ldots,|\gamma_m|) < j_i$. Then, we say $\Lambda$ has $\vec j$-th order paraproducts if for each $(|\gamma_1|,\ldots,|\gamma_m|)=j_i$, there exist $b_{\gamma}^i \in \BMO$ such that for all $\psi \in \cals_{|j_i|}$,
	\[ \Lambda_{\vec j} := \Lambda - \sum_{i=0}^m \sum_{\substack{(|\kappa_1|,\ldots,|\kappa_m|) \\ < j_i}} \Pi^{i*}_{{b_\kappa^i},\kappa} \]
satisfies
	\[ \Lambda_{\vec j}^{i*}(\psi,x_1^{\gamma_1},x_2^{\gamma_2},\ldots,x_m^{\gamma_m}) = \lip b_\gamma^i,\partial^{-(\gamma_1 + \cdots + \gamma_m)}\psi \rip.\]
Under this definition, one can verify by induction that $\Lambda_{\vec j}$ has vanishing paraproducts of all orders $<\vec j$.

\begin{definition}
Let $\vl \in \N^{m+1}$ and $\vec k =(k_0,k_1,\ldots,k_m) \in (\N^{m})^{m+1}$ with
	\[ |k_i| \le \ell_i, \quad i=0,1,\ldots,m. \]
A $(\vl,\delta)$ SI form $\Lambda$ is called a $(\vec k,\delta)$ CZ (Calder\'on-Zygmund) form if it has paraproducts up to order $\vec k$ and satisfies the Weak Boundedness Property: There exists $C>0$ such that
	\[ t^{md} \Lambda(\psi^0_z,\psi_z^1,\ldots,\psi_z^m) \le C \]
for all $\psi^i_z \in \Psi_z^{0,\delta;1}$ supported in the ball $B(w,t)$.
\end{definition}

\subsubsection{Wavelet Forms}
The trilinear wavelet forms and wavelet classes must also be extended to the $m$-linear setting. Extending the norm $\|\cdot\|_{\star,\eta,\delta}$ to functions defined on $(\R^d)^m$, the wavelet classes $\Psi^{k,\delta;\iota}_z$ are the collection of all $\varphi \in C^k(\R^{md})$ such that
	\[ t^{|\gamma|} \| (\Sy_z^1\cdots\Sy_z^m)^{-1}\partial^\gamma \varphi\|_{\star,k+\delta,\delta} \lesssim 1\quad \mbox{for } \gamma \in \N^{dm}, \quad |\gamma| \le k \]
and $\iota \in \{0,1\}^m$ controls the cancellation in the obvious way. The main case we will need is $\iota = (1,1,\ldots,1,0)$ in which case $\varphi$ satisfy
	\[ \int_{\R^d} x_m^{\gamma_m} \varphi(x_1,\ldots,x_m) \, \d x_m = 0 \quad \mbox{for } |\gamma| \le k. \]
A $(j,\delta)$ wavelet form $U_j$ is now defined, for some $\nu_z \in \Psi^{j,\delta;(1,\ldots,1,0)}_z$, by
	\[ U_j(\vec f) = \int_{Z^d} \left \l \otimes_{i=1}^m f_i, \nu_z \right \r \l f_0, \phi_z \r \, \d \mu(z). \]

\begin{theorem}\label{thm:vec}
Let $0<\eta<\delta$ and $\vec k =(k_0,k_1,\ldots,k_m) \in (\N^m)^{m+1}$. Let $k_i^*$ be the smallest entry of $k_i$ and let $\Lambda$ be a $(\vec k,\delta)$ CZ form. Then there exists $(j,\eta)$-smooth wavelet forms $U^{i,\pi}_j$, and paraproduct forms $\Pi_{\gamma}$ such that
	\[ \Lambda(\vec f) = \sum_{i=0}^m \left[ \sum_{j=k_i^*}^{|k_i|} \sum_{\pi \in S^{m}} U^{i,\pi}_j(\pi(f_0,\ldots,f_{i-1},f_{i+1},\ldots,f_m),f_i) + \sum_{(|\gamma_1|,\ldots,|\gamma_m|) \le k_i}\Pi^{i*}_{b_\gamma^i,\gamma}(\vec f) \right] \]
for all $\vec f = (f_0,f_1,\ldots,f_m) \in\cals^{m+1}$.
\end{theorem}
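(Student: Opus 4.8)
The plan is to transcribe the proof of Theorem~\ref{thm:sym} with $m{+}1$ arguments replacing three, letting the additional combinatorics be carried by the finite sums over $i$ and $\pi\in S^m$. As in the bilinear case, I would first dispose of the lower-order paraproducts: replace $\Lambda$ by
\[
\Lambda_{\vec k}:=\Lambda-\sum_{i=0}^m\ \sum_{(|\kappa_1|,\dots,|\kappa_m|)<k_i}\Pi^{i*}_{b^i_\kappa,\kappa},
\]
which by the inductive definition of the $\vec k$-th order paraproducts is again a $(\vec k,\delta)$ CZ form, now with vanishing paraproducts of all orders $<\vec k$; since the subtracted terms already have the shape of the paraproduct forms in the conclusion, it suffices to represent $\Lambda_{\vec k}$ and add them back. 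From here on I assume $\Lambda$ has vanishing paraproducts below $\vec k$.

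Next I would apply the Calder\'on reproducing formula \eqref{eq:calderon} to each of $f_0,\dots,f_m$, producing an integral over $(Z^d)^{m+1}$ of $\Lambda(\phi_{z_0},\dots,\phi_{z_m})$ against the wavelet coefficients. I would partition the $m{+}1$ scale variables by which one is minimal; on the part where the $i$-th scale is minimal, call it $t$ and route $f_i$ into the output slot by passing to $\Lambda^{i*}$. There, one orders the remaining $m$ scales — this is the source of $\pi\in S^m$ — and, by iterating Lemma~\ref{lemma:calderon} (equivalently, an $m$-fold analogue of Lemma~\ref{lemma:product}), collapses all $m$ of them onto the smallest among them, say $s\ge t$. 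The outcome is, for each $(i,\pi)$ and each of finitely many further choices, a single-scale form
\[
\int_{Z^d}\!\int_{Z(w,t)}\Upsilon(u_1,\dots,u_m,w,s,t)\ \l f_i,\phi_{w,t}\r\ \left(\prod_{\ell=1}^{m-1}\l f_{(\ell)},\psi^{\sharp}_{u_{(\ell)},s}\r\right)\ \l f_{(m)},\phi_{u_{(m)},s}\r\ \frac{\d u_1\cdots\d u_m\,\d s}{s}\,\frac{\d w\,\d t}{t},
\]
in which $f_i$ sits against the mother wavelet at the minimal scale, the input $f_{(m)}$ of smallest non-minimal scale retains a mother wavelet (this will be the future cancellative slot of $\nu$, matching $\iota=(1,\dots,1,0)$), the remaining $f_{(1)},\dots,f_{(m-1)}$ are tested against noncancellative wavelets $\psi^\sharp\in\{\psi^1,\psi^3\}$ while their slots in $\Lambda^{i*}$ carry the cancellative partners $\psi\in\{\psi^2,\psi^4\}$ at scale $s$, and $\Upsilon$ is this $\Lambda^{i*}$-evaluation minus, on $A(w,t)$, the correction described below. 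The two choices $\psi^\sharp=\psi^1$ and $\psi^\sharp=\psi^3$ produce wavelet forms of the same type and are grouped together.

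The analytic heart is the $m$-linear version of Lemma~\ref{lemma:U}. On $A(w,t)$ — where all non-minimal positions lie over $B(w,t)$ at scale $\ge 3t$ — the correction is built from the degree-$\le k_i$ Taylor polynomials $P$ of the collapsed inputs about $w$; off $A(w,t)$ one integrates by parts $|k_i|$ times onto the kernel (using the high vanishing moments of $\phi_{w,t}$) together with the H\"older $(\vl,\delta)$-kernel bound on the Far region and the WBP on the Near region, while on $A(w,t)$ one writes each collapsed wavelet as $P+(\psi-P)$, controls $\psi-P$ by the $m$-linear version of \cite[Lemma~3.1]{diplinio20} via Lemma~\ref{lemma:almost}, and bounds the cross terms. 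The purely polynomial piece $\Lambda^{i*}(P,\dots,P,\phi_{w,t})$ is evaluated by the $\vec k$-th order paraproduct testing condition, producing the $\BMO$ symbols $b^i_\gamma$; the mixed ``half-paraproduct'' pieces obey only the weaker bound \eqref{eq:half-est}, at the smaller smoothness orders $j=k_i^*,\dots,|k_i|-1$. Altogether $|\Upsilon|\lesssim t^{|k_i|+\eta}/\max\{s,|u_1-w|,\dots,|u_m-w|\}^{md+|k_i|+\eta}$, so the $m$-linear version of Proposition~\ref{prop:psi-nu} (the same averaging, Lemma~\ref{lemma:star}, now on $\R^{md}$) turns the $\Upsilon$-term into a $(|k_i|,\eta)$-smooth wavelet form $U^{i,\pi}_{|k_i|}$, and — via that proposition and \eqref{eq:half-est} — the half-paraproduct terms into the lower-complexity forms $U^{i,\pi}_j$, $k_i^*\le j<|k_i|$. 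Finally the polynomial piece splits, by the $m$-linear version of Lemma~\ref{lemma:psi-theta} and the fact that $b^i_\gamma\in\BMO$ pairs with the mean-zero $\partial^{-(\gamma_1+\cdots+\gamma_m)}\phi$, into one more wavelet form plus a remainder which, reassembled across the orderings $\pi$ and the noncancellative choices via the $m$-fold Lemma~\ref{lemma:product}, reconstitutes the continuous paraproduct $\Pi^{i*}_{b^i_\gamma,\gamma}$. Summing over $(i,\pi)$ and undoing the first reduction gives the stated identity.

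I expect the main obstacle to be the $A(w,t)$ analysis in the asymmetric setting: setting up the multivariate Taylor-remainder estimates for the collapsed tensor wavelets with the per-variable budget encoded in $\vec k$, checking that after removing the monomial-testing contributions the residue genuinely has the size $t^{|k_i|+\eta}/\max\{\cdots\}^{md+|k_i|+\eta}$, and matching the half-paraproduct errors precisely to the complexity range $[k_i^*,|k_i|]$. A secondary point of care is the $m$-fold single-scale reproducing formula itself: tracking which cancellation type each input inherits along the iterated high-low collapse, and ensuring the collapse always leaves the minimal-scale input in the output slot — this is forced because the wavelet classes $\Psi^{k,\delta;\iota}_z$ are stable under antidifferentiation only in the last variable. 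Beyond these, the argument is a faithful, if notationally heavier, copy of the bilinear proof.
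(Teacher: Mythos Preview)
Your proposal is correct and follows essentially the same approach as the paper's proof. One small simplification the paper makes: rather than partitioning by the full ordering of the $m$ non-minimal scales (your source for $\pi\in S^m$), it only splits according to which scale is \emph{second} smallest, say $t_j$, and then applies Lemma~\ref{lemma:calderon} $m-1$ times to collapse each of the remaining scales directly onto $t_j$; this yields $(m{+}1)\times m\times 2^{m-1}$ single-scale terms rather than $(m{+}1)\times m!\times 2^{m-1}$, but the difference is purely bookkeeping and both fit under the statement's indexing by $\pi\in S^m$.
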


If $m=2$, the proof is the same as before, only the steps with $II$ and $III$ are carried out as if $(k_1,k_2)$ is replaced by $k_1$ and $k_2 \in \N^2$. For larger $m$, we outline the necessary modifications. Again, decompose $\Lambda(\vec f)$ using the Calder\'on formula (\ref{eq:calderon}) $m+1$ times to get
	\[ \Lambda(\vec f) =  \int_{(Z^d)^{m+1}}  \Lambda(\phi_{z_0},\ldots,\phi_{z_m}) \prod_{i=0}^m \l f_i, \phi_{z_i} \r  \, d\mu(z_i). \]
Split $(Z^d)^{m+1}$ into $m+1$ regions $Z_i = \{ z_i = (w_i,t_i) : t_i = \min_\ell t_\ell \}$ and each $Z_i$ again into $Y_{i,j}=\{ t_j = \min_{\ell \ne i} t_\ell \}$. On each $Y_{i,j}$ use Lemma \ref{lemma:calderon} $m-1$ times to bring the integration down to the two scales $t_j > t_i$. In this way, 
	\[ \Lambda(\vec f) = \sum_{i=0}^m \sum_{j \ne i} \sum_{\vec \psi, \, \tilde \psi} \,  \int\limits_{t_i >0} \int\limits_{t_j > t_i} \int\limits_{(\R^d)^{m+1}}  \Lambda^{i*}(\phi_{w_i,t_i}, \vec \psi) \l f_i, \phi_{w_i,t_i} \r \l f_j,\phi_{w_j,t_j} \r \prod_{\ell=0, \ell \ne i,j}^m \l f_\ell,\tilde \psi_{w_\ell,t_j} \r \dfrac{\d w \, \d t_j \, \d t_i}{t_jt_i}. \]
Each $\vec \psi$ is a vector of $m$ functions where one entry is the mother wavelet $\phi_{w_j,t_j}$ and the other ones are either $\psi^1_{w_\ell,t_j}$ or $\psi^3_{w_\ell,t_j}$ --- the cancellative functions from Lemma \ref{lemma:calderon}. $\tilde \psi$ is either $\psi^2$ or $\psi^4$.
This gives $(m+1) \times m \times 2^{m-1}$ terms which correspond to the 12 terms $I + II + III$ from the proof of Theorem \ref{thm:sym}. Each summand is handled in the same way as $\sigma_1$ in (\ref{eq:sigma0-4}) above.
The kernel estimates and wavelet averaging lemma (Lemmas \ref{lemma:star} and \ref{lemma:U}) can be easily reproduced in the same way as in the bilinear case.

Our extension to the nonsymmetric case generalizes  results of \cite{frazier88,benyi2003bilinear} to forms whose paraproducts of lower orders do not vanish. In particular, we obtain Sobolev bounds when the kernel only has extra smoothness in one of the $m+1$ variables.

\begin{cor}
Let $k_0 \in \N^m$, $\vec k = (k_0,0,\ldots,0)$, and $\Lambda$ be a $(\vec k,\delta)$ CZ form with 
	\[ D^{k_0^*-|\gamma|}b_\gamma^0 \in \BMO, \quad \mbox{for } |\gamma| \le k_0^*\]
where $k_0^*$ is the minimum entry of $k_0$. Then, for $\vec p \in P_\circ$, $p=p(\vec p)$, and $\vec v \in A_{\vec p}$,
	\[ \begin{aligned} \|T(f_1,\ldots,f_m)\|_{\dot W^{k_0^*,p}(\frac{1}{v_{m+1}})} &\lesssim [\vec v]_{A_{\vec p}}^{\max\{p_i',p\}} \sum_{|\vec j| \le k_0^*} \prod_{i=1}^m \|f_i\|_{\dot W^{j_i,p_i}(v_i)}, \\
	\|T(f_1,\ldots,f_m)\|_{W^{k_0^*,p}(\frac{1}{v_{m+1}})} &\lesssim [\vec v]_{A_{\vec p}}^{\max\{p_i',p\}}\prod_{i=1}^m \|f_i\|_{W^{k_0^*,p_i}(v_i)}. \end{aligned} \]
If in addition, $b_\gamma^0=0$ for $(|\gamma_1|,|\gamma_2|,\ldots,|\gamma_m|)<k_0$, then
	\[ \|T(f_1,\ldots,f_m)\|_{\dot W^{k_0^*,p}(\frac 1{v_{m+1}})} \lesssim [\vec \_{A_{\vec p}}^{\max\{p_i',p\}} \sum_{j=1}^m \|f_j\|_{\dot W^{k_0^*,p_j}(v_j)} \prod_{i\ne j} \|f_iv_i\|_{\dot L^{p_i}(\R^d)}. \] 
\end{cor}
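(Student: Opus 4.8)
The plan is to run, in the $m$-linear asymmetric setting, the argument of Section~\ref{subsubsec:proof-sob-min}, feeding the representation Theorem~\ref{thm:vec} into the ($m$-linear analogues of the) sparse and weighted bounds for the intrinsic forms $\MSS$ and $\pi_b$. First I would apply Theorem~\ref{thm:vec} with $\vec k=(k_0,0,\ldots,0)$: since $k_i=0$ for $i\ge1$, its conclusion collapses to a sum of wavelet forms $U^{0,\pi}_j$, $k_0^*\le j\le|k_0|$ (with the output $f_0$ paired against the mother wavelet $\phi_z$), complexity-zero adjoint wavelet forms $U^{i,\pi}_0$ for $i=1,\ldots,m$, genuine paraproducts $\Pi_{b^0_\gamma,\gamma}$ for $\gamma\le k_0$, and classical adjoint paraproducts $\Pi^{i*}_{b^i_0,0}$. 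By duality the homogeneous estimate reduces to a pointwise bound of $\sum_{|\kappa|=k_0^*}\bigl|\Lambda(\partial^\kappa h,f_1,\ldots,f_m)\bigr|$ by a finite sum of $m$-linear intrinsic forms of type $\MSS$ and $\pi_b$ evaluated at derivatives of order $\le k_0^*$ of the $f_i$; the $m$-linear versions of Propositions~\ref{prop:sparse} and~\ref{prop:weighted}, whose proofs are insensitive to the linearity, then deliver the weighted $L^p$ inequality with exponent $[\vec v]_{A_{\vec p}}^{\max\{p_i',p\}}$.

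The core of the proof is transferring the $k_0^*$ derivatives through each family, just as for $U^1_j,\dots,U^6_j$ and the paraproducts in Theorem~\ref{thm:sob-min}. For $U^{0,\pi}_j$ with $j\ge k_0^*$ I would move $\partial^\kappa$ from $h$ onto $\partial^\kappa\phi_z$ (still admissible up to the chosen order $D$) and then integrate by parts $k_0^*$ times to shift $k_0^*$ derivatives from the tensor wavelet $\nu_z\in\Psi^{j,\delta;(1,\ldots,1,0)}_z$ onto one input, using the $m$-linear analogue of Proposition~\ref{prop:psi-nu} ($t^{|\alpha|}\partial^{-\alpha}\Psi^{j,\delta;(1,\ldots,1,0)}_z\subset\Psi^{0,\delta;(1,\ldots,1,0)}_z$ for $|\alpha|\le j$); summing over permutations this produces $\sum_j\sum_\ell\sum_{|\gamma|=k_0^*}\MSS(\ldots,\partial^\gamma f_\ell,\ldots)$. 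For the adjoint forms $U^{i,\pi}_0$, $i\ge1$, and for $\Pi^{i*}_{b^i_0,0}$, the variable $h$ sits in a non-cancellative tensor slot at the upper scale $s$; differentiating it yields a factor $s^{-k_0^*}$, which I rewrite as $t^{-k_0^*}(t/s)^{k_0^*}$ so that $(t/s)^{k_0^*}$ improves the underlying symbol $\Upsilon_0$ by $k_0^*$ orders of decay --- this works even though the kernel is merely $\delta$-H\"older in the $i$-th variable --- while the residual $t^{-k_0^*}$ is absorbed by moving $k_0^*$ derivatives onto the $\phi_z$-paired function $f_i$; the wavelet averaging Lemma~\ref{lemma:star} (via Proposition~\ref{prop:psi-nu}) then repackages these into $\nu'_z\in\Psi^{k_0^*,\delta;\,\cdot}_z\subset\Psi^{0,\delta;\,\cdot}_z$, so that these terms are dominated by $\MSS$- and $\pi_b$-type forms carrying a derivative on $f_i$. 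I expect this last point --- confirming that only $\delta$-H\"older smoothness of the kernel in the input variables is needed to carry the full $k_0^*$ derivatives onto the output --- to be the main technical obstacle.

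For the genuine paraproduct $\Pi_{b^0_\gamma,\gamma}$ with $\gamma\le k_0$ I would distribute the $k_0^*$ derivatives among the inputs within the vanishing-moment budget $|\alpha_\ell|\le|\gamma_\ell|$ of the families $\varu^{\gamma_\ell}_z$: if $|\gamma|>k_0^*$ all of them land on the $f_\ell$, while if $|\gamma|\le k_0^*$ the residual $k_0^*-|\gamma|$ derivatives fall on $b^0_\gamma$, which is precisely where the hypothesis $D^{k_0^*-|\gamma|}b^0_\gamma\in\BMO$ ($|\gamma|<k_0^*$) enters, and it is needed nowhere else. Collecting the contributions and applying the $m$-linear Proposition~\ref{prop:weighted} yields the homogeneous estimate with the mixed-order right-hand side $\sum_{|\vec j|\le k_0^*}\prod_i\|f_i\|_{\dot W^{j_i,p_i}(v_i)}$; the inhomogeneous bound then follows by combining it with the case $k_0^*=0$ of the present argument (the weighted $L^p$ bound, valid since $\Lambda$ is in particular a $(0,\ldots,0)$ CZ form) and the elementary comparison of weighted Sobolev norms, exactly as for~(\ref{eq:sob-inhom}). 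Finally, when $b^0_\gamma=0$ for $\gamma<k_0$ only $\gamma=k_0$ survives among the genuine paraproducts, and since $k_0^*=\min_\ell (k_0)_\ell$ all $k_0^*$ derivatives may be placed on a single input there; as the surviving wavelet and adjoint-paraproduct terms already have that shape, every mixed-order term disappears and one is left with $\sum_\ell\|f_\ell\|_{\dot W^{k_0^*,p_\ell}(v_\ell)}\prod_{i\ne\ell}\|f_iv_i\|_{L^{p_i}(\R^d)}$.
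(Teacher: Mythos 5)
Your proof is essentially correct and follows the same route the paper intends, namely applying the asymmetric $m$-linear representation of Theorem~\ref{thm:vec} with $\vec k=(k_0,0,\ldots,0)$ and then transferring the $k_0^*$ derivatives through the resulting wavelet and paraproduct forms exactly as in the proof of Theorem~\ref{thm:sob-min}, feeding the result into the $m$-linear version of Propositions~\ref{prop:sparse} and~\ref{prop:weighted}. The paper leaves this corollary as an unwritten consequence of its Section~\ref{sec:asym} sketch, and you correctly identify the two nontrivial points: that the adjoint wavelet and paraproduct terms collapse to complexity zero when $k_i=0$ for $i\ge1$ yet still absorb the $(t/s)^{k_0^*}$ gain in the unwound symbol, and that in the genuine paraproducts $\Pi_{b^0_\gamma,\gamma}$ any derivatives that cannot be placed on the $\varu^{\gamma_\ell}$-paired inputs land on $b^0_\gamma$, which is where the hypothesis $D^{k_0^*-|\gamma|}b^0_\gamma\in\BMO$ enters. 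One point worth being slightly more explicit about for the inhomogeneous estimate: passing from the homogeneous bound at level $k_0^*$ to the full $W^{k_0^*,p}$ norm requires the same paraproduct condition at every intermediate order $0\le j<k_0^*$, i.e.\ $D^{j-|\gamma|}b^0_\gamma\in\BMO$ for $|\gamma|<j$; this follows from the hypothesis together with $b^0_\gamma\in\BMO$ by interpolation, but it should be stated rather than folded into ``the case $k_0^*=0$.''
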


\section{Comments and Further Questions}\label{sec:comments}
We first discuss the laborious definition of the paraproducts introduced here (Section \ref{subsec:para}) and in \cite{diplinio20}. The reader might object to this definition because, by looking at any SI form, one cannot immediately tell whether it has paraproducts of, let us say, order (1,0), even after constructing $b_0$ and subtracting $\Pi_{b_0}$.

It may be proposed that one may more immediately test $\Lambda(x,1,\psi)$ than $\Lambda_{1,0}(x,1,\psi)$. However, we do not know whether $\Lambda(x,1,\psi)$ has anything to do with the boundedness properties of $\Lambda$. A first example is the form $\Pi_{b,0}$. As shown above, it is enough for $b,Db \in \BMO$ for $\Pi_{b,0}:W^{1,4} \times W^{1,4} \to W^{1,2}$. However, using the ideas of Calder\'on-Toeplitz operators \cite{rochberg1990toeplitz,nowak1993calderon}, it can be shown that $\Pi_{b,0}(x,1,\psi) \sim \lip xb, \psi \rip + \cdots$ and we see no reason why $D(xb) \in \BMO$ should imply $b,Db \in \BMO$ or vice versa. If some real connection could be realized between $\Pi_{b,0}(x,1,\phi)$ and $b,|\nabla b|$, then we could simplify the definition of paraproducts. We also refer to the paper \cite{wang97} where this iterative definition is avoided, however one must pay a price in the testing condition, so that $T(x^\gamma)$ is replaced by $T((x-w)^\gamma)$ for infinitely many $w$.

Secondly, we would like to remark that our results may be   extended to the full spectrum of smoothness spaces, say Triebel-Lizorkin and Besov scales, by simply  adjusting  the procedures of Subsection \ref{subsec:frac-sob} to handle the corresponding smoothness norm. In fact, our framework is particularly apt to handle spaces characterized by  wavelet coefficient estimates such as those of  Besov or Triebel-Lizorkin  type.
One can obtain some negative Sobolev space results of the type $T:W^{-k,p_1}(v_1) \times W^{k,p_2}(v_2) \to W^{-k,p}(v)$ by applying our theorems to $T^{*1}$. Using $T^{*2}$ would exchange the two input spaces.  However, we do not know how to obtain $T: \prod_{i=1}^m W^{-k,p_i} \to W^{-k,p}$ with our methods, except when $m=1$.

Finally,  the constraint $\frac{1}{r_1} + \frac{1}{r_2} < \frac{\sigma+d}{d}$ in the    sparse domination result of Proposition \ref{prop:sparse}, which was the main ingredient leading to   the fractional Sobolev space bound of Theorem \ref{thm:sob-frac}, is sharp up to the equality possibly holding.
Indeed, taking $f(x)=\e^{10ix}\phi(x)$ for $\widehat \phi \in C^{\infty}_0( B(0,1))$, $D^\sigma (f \bar f) \sim (1+|x|)^{d+\sigma}$ for large $x$. For $g= 1_{B(0,2^{k+1})} - 1_{B(0,2^{k})}$,
	\[ \l D^\sigma(f\bar f),g \r \sim 2^{kd} 2^{-k(d+\sigma)}. \]
However, if one had a sparse bound of the form $\l D^\sigma(f \bar f),g \r \lesssim \sum_{Q} |Q| \l D^\sigma f \r_{r_1,Q} \l f \r_{r_2,Q} \l g \r_{r_3,Q}$, then $\l D^{\sigma}(f\bar f),g \r$ would be controlled by
	\[ 2^{kd} \l D^\sigma f \r_{r_1,B(0,2^{k+5})} \l f \r_{r_2,B(0,2^{k+5})} \l g \r_{r_3,B(0,2^{k+1})} \lesssim 2^{kd} 2^{-kd/r_1} 2^{-kd/r_2} \]
so that $d(1/r_1+1/r_2)\le d+\sigma$, i.e. $\frac{1}{r_1} + \frac{1}{r_2} \le \frac{d+\sigma}{d}$.


\bibliography{journal-abbrv-short,refs-wrt}
 \bibliographystyle{amsplain}

\end{document}